\numberwithin{equation}{section}
\theoremstyle{plain}
\newtheorem{theorem}{Theorem}
\newtheorem{lemma}{Lemma}
\newtheorem{proposition}{Proposition}
\newtheorem{corollary}{Corollary}
\theoremstyle{definition}
\newtheorem{remark}{Remark}
\newtheorem{example}{Example}
\title{On time-dependent boundary crossing probabilities of diffusion processes as differentiable functionals of the boundary\footnote{Supported
    by the University of Melbourne Faculty of Science Research Grant Support Scheme.}}
\date{}
\author{Vincent Liang\footnote{School of Mathematics and Statistics, The University of Melbourne, Parkville 3010, Australia; e-mail: liangvw@gmail.com (the corresponding author).}\,\, and Konstantin Borovkov\footnote{School of Mathematics and Statistics, The University of Melbourne, Parkville 3010, Australia; e-mail: borovkov@unimelb.edu.au.}}
\newcommand{\T}{T}
\DeclareMathOperator{\var}{Var}
\newcommand{\bm}[1]{\boldsymbol{#1}}
\newcommand{\contSpace}{C}
\newcommand{\boundarySpace}{C^2}
\newcommand{\cmSpace}{H}
\newcommand{\LpSpace}{L}
\newcommand{\Px}{\mathbf{P}}
\newcommand{\Ex}{\mathbf{E}}
\newcommand{\Pz}{\widetilde{\Px}}
\newcommand{\Ez}{\widetilde{\Ex}}
\newcommand{\Wz}{\widetilde{W}}
\newcommand{\heatX}{\mathscr{L}}
\newcommand{\heatY}{\overline{\mathscr{L}}}
\newcommand{\A}{\mathscr{A}} 
\newcommand{\X}{X} 
\newcommand{\Y}{Y} 
\newcommand{\Q}{Q_{\T}}
\newcommand{\D}{D_{\T}}
\newcommand{\boundarySet}{S}
\DeclareMathOperator{\cl}{cl}
\newcommand{\V}{v}
\newcommand{\bv}{\overline{v}}
\newcommand{\qY}{\overline{q}}
\newcommand{\q}{q}
\newcommand{\muX}{\mu}
\newcommand{\sigmaX}{\sigma}
\newcommand{\muY}{\overline{\mu}}
\newcommand{\sigmaY}{\overline{\sigma}}
\newcommand{\usigma}{\sigma_0}
\newcommand{\muZ}{\eta}
\newcommand{\tauX}{\tau}
\newcommand{\tauY}{\overline{\tau}}
\newcommand{\Eta}{\textnormal{H}}
\newcommand{\hd}{\mathfrak{h}}
\newcommand{\hD}{N}
\newcommand{\Lmg}{L}
\newcommand{\rieszMG}{U}
\newcommand{\zhe}[7]{#1: #2. {\textit{#3}}  {\textbf{#4}}, (#5), #6. MR#7}
\newcommand{\kn}[7]{#1: #2. {\textit{#3}}, #4, #5.  #6~pp. MR#7}
\newcommand{\arx}[3]{#1: #2,  #3 }
\begin{document}

\maketitle

\begin{abstract}
The paper analyses the sensitivity of the finite time horizon boundary non-crossing probability $F(g)$ of a general time-inhomogeneous diffusion process to perturbations of the boundary $g$. We prove that, for boundaries $g\in C^2,$ this probability is G\^ateaux differentiable in directions $h \in \cmSpace \cup \contSpace^2$ and Fr\'echet-differentiable in directions $h \in H,$ where~$H$ is the Cameron--Martin space, and derive a compact representation for the derivative of~$F$. Our results allow one to approximate~$F(g)$ using boundaries $\bar{g}$ that are close to $g$ and for which the computation of~$F(\bar{g})$ is feasible. We also obtain auxiliary results of independent interest in both probability theory and PDE theory. These include: (i)~an elegant probabilistic representation for the limit of the derivative with respect to $x$ of the boundary crossing probability when the process starts at point $(t,x)$ in the time-space domain and $x\uparrow g(t),$ and~(ii)
a Shiryaev--Yor type martingale representation for the indicator of the boundary non-crossing event for time-dependent boundaries.

Keywords: diffusion process; time-dependent boundary crossing; Fr\'echet derivative; G\^ateaux derivative; Cameron--Martin space; Doob's $h$-transform.

Mathematics Subject Classification (2020): Primary 60J60. Secondary 60G40; 60G46.
\end{abstract}

\section{Introduction}

The probability $F(g)$ that a diffusion process $X$ stays beneath a time-dependent boundary $g$ during a given time interval $[0,T]$ is a classical object of study in probability theory. Computation of this quantity is a crucial step in solving several important applied problems in areas such as optimal stopping~\cite{Peskir2006}, genetic linkage analysis~\cite{Dupuis2000}, pricing barrier options~\cite{Armstrong2001,Roberts1997}, etc. In the general case, the values $F(g)$ can be approximated by using boundaries $\bar{g}$ that are close to $g$ and for which computation of $F(\bar{g})$ is feasible. A key element in this is analysis of the sensitivity of $F(g)$ to small perturbations of the boundary~$g$.

It has been known since the 1930s that the non-crossing probability can be expressed as a solution to the respective boundary value problem for the backward Kolmogorov partial differential equation \cite{Kolmogorov1931,Khintchine1933,Kolmogorov1933}. However, simple closed-form expressions for the solutions are mostly confined to the case of the Brownian motion process using the method of images~\cite{Lerche1986}, whereas available results for diffusion processes usually rely on verifying Cherkasov's conditions \cite{Cherkasov1957,Ricciardi1976,Ricciardi1983} and then transforming the problem to that for the Brownian motion process by using a monotone transformation. Outside this class of special cases, one should mostly rely on computing numerical approximations to the desired probabilities.

One popular approach consists in first computing the first-passage-time density for the boundary which can be done using Volterra integral equations. Much work was done on the method of integral equations for approximating the first-passage-time densities for general diffusion processes \cite{Buonocore1990,Buonocore1987,Giorno1989,Jaimez1991,Ricciardi1984,Ricciardi1983,Sacerdote1996}, culminating with~\cite{Gutierrez1997}, which expressed the first-passage-time density of a general time-inhomogeneous diffusion process in terms of the solution to a Volterra integral equation of the second kind. Volterra integral equations of the first kind for the first-passage time density for the Brownian motion were derived in \cite{Durbin1971,Loader1987,Park1976,Peskir2002}. Although the method of integral equations is quite efficient for computational purposes in the case of the Brownian motion process, a drawback of the method is that the kernel of the integral equation is expressed in terms of the transition probabilities of the process. Therefore, we have to first compute these transition probabilities for all times on the time grid used, which will be problematic for a general diffusion process. For further details on the connection between Volterra integral equations and first-passage-time densities, we refer the reader to~\cite{Peskir2002}. Other popular computational techniques are the Monte Carlo method \cite{Gobet2000,Ichiba2011}, including exact simulation methods \cite{Beskos2006,Beskos2005,Herrmann2019,Herrmann2020}, and numerical solving of partial differential equations (see e.g.~\cite{Patie2008} and references therein).

The classical probabilistic approach to the boundary-crossing problem is the method of
weak approximation, which involves proving that a sequence of simpler processes~$X_n$ weakly converges to the desired diffusion process~$X$ in a suitable function space, which entails the convergence of the corresponding non-crossing probabilities. Along with the already mentioned references \cite{Khintchine1933,Kolmogorov1931,Kolmogorov1933}, this approach was effectively used in~\cite{Erdos1946} in the case of `flat boundaries' (see also Chapter 2, \S 11 in \cite{Billingsley1968}). More recently, the authors in~\cite{Fu2010} approximated the Brownian motion process with a sequence of discrete Markov chains with absorbing states at the boundary and expressed the non-crossing probability in terms of the product of the respective transition matrices. The authors of~\cite{Ji2015} extended the results from~\cite{Fu2010} by approximating a general diffusion process with a sequence of Markov chains and similarly expressed the non-crossing probability via a product of the transition matrices. However, the convergence rates of these approximations were shown to be $O(n^{-1/2}),$ where the discrete time steps are of size~$n^{-1},$ which leaves much to be desired in practical applications. A Brownian bridge corrected Markov chain approximation for general diffusion processes was proposed in~\cite{Liang2023}, which was numerically demonstrated to converge at the rate $O(n^{-2}).$ One can also use continuity correction employing boundary shifting to compensate for the `discrete monitoring bias' \cite{Broadie1997,Gobet2010,Siegmund1982}.

Another standard approach is to approximate the original boundary~$g$ with a close one for which computing the non-crossing probability is more feasible, a popular choice being a piecewise linear approximation of~$g$. These approximations were used in~\cite{Durbin1971} in combination with the integral equations approach to obtain an approximation to the first passage time density in the case of the Brownian motion process. For this process, piecewise linear approximations for~$g$  were also used in~\cite{Wang1997} together with the explicit formulae for linear boundary crossing probabilities for the Brownian bridge to express the approximating probability as a multivariate Gaussian integral. This approach was extended in~\cite{Wang2007} to the case of diffusion processes that can be obtained as monotone transforms of the Brownian motion.

To meaningfully use approximations of this kind, one must analyse the sensitivity of $F(g)$ to small perturbations of the boundary~$g.$ In~\cite{Wang1997}, the authors proposed a special construction for a sequence of non-uniform partitions of $[0,T]$ for the nodes of piecewise linear approximations~$g_n^*$ of~$g$ that satisfied $\| g_n^* -g \|_{\infty} \to 0$ as $n\to\infty,$ where $\|x \|_{\infty} := \sup_{t\in[0,T]}\lvert x(t)\rvert,$ and obtained the following bound:
\begin{equation*}
	\lvert F(g_n^*) - F(g) \rvert  = O\bigl( \| g_n^* - g\|_{\infty}\sqrt{-\ln(\| g_n^* - g\|_{\infty})} \bigr).
\end{equation*}
Under the assumptions that $g''(t)$
exists and is continuous in $(0,T),$ $g''(0+)\neq 0,$ and $g''(t) =0$ at only finitely many points $t\in(0,T),$ the authors in~\cite{Potzelberger2001} showed that, for the same sequence of boundaries~$g_n^*,$ one actually has
\begin{equation*}
	\lvert F(g_n^*) - F(g)\rvert = O(n^{-2}), \quad n \to \infty.
\end{equation*}
Unfortunately, the special construction of the approximating functions~$g_n^*$ achieving the above convergence rate is quite cumbersome.

To remove this obstacle and obtain convergence rates for general approximations to~$g,$ the authors in~\cite{Novikov1999} obtained the following bound in the case when~$g$ is bounded measurable and~$h$ is differentiable with $h(0) = h(T) =0$:
\begin{equation*}
 \lvert F(g + h) - F(g) \rvert \leq \mbox{$\frac{1}{\sqrt{2\pi}}\Phi(g(T)/\sqrt{T}) (\int_0^T \dot{h}(t)^2\,dt)^{1/2}$},
\end{equation*}
where $\Phi$ is the standard normal distribution function. As a consequence, they concluded that, for piecewise linear~$g_n$ with equally spaced nodes $(k/n,g(k/n))$, $k=1,2,\ldots,n$, one has
\begin{equation*}
	\lvert F(g_n) - F(g)\rvert =  O(n^{-3/2}\sqrt{\ln{n}}),\quad n\to\infty.
\end{equation*}

The authors in~\cite{Borovkov2005} went further and proved that~$F$ is Lipschitz with respect to the uniform norm provided that  $\sup_{t,t+\delta \in[0,T]}\lvert g(t+\delta) - g(t)\rvert \leq K\lvert \delta\rvert$ for some $K \geq 0.$ They showed that, in this case,
\begin{equation*}
\lvert F(g+h) - F(g) \rvert \leq (2.5K + 2T^{-1/2}) \| h\|_{\infty}
\end{equation*}
for any bounded measurable $h,$ and concluded that, for $g \in \contSpace^2([0,T])$ (i.e.~$g$ is twice continuously differentiable on $[0,T]$) and the same piecewise linear~$g_n$ as above,
\begin{equation*}
	\lvert F(g_n) - F(g)\rvert \leq (0.313K + 0.25T^{-1/2})\| \ddot{g}\|_{\infty}n^{-2}.
\end{equation*}
This result was extended in~\cite{Downes2008} to time-homogeneous diffusion processes with a unit diffusion coefficient. An extension to the multivariate Brownian motion was established in~\cite{McKinlay2015}.

The above bounds indicated that $F(g)$ might be differentiable in some sense. Indeed, it was shown already in 1951 in~\cite{Cameron1951} that~$F$ is G\^ateaux differentiable in `directions' from the Cameron--Martin space $\cmSpace$ in the case when $X=W.$ In the general case of time-inhomogeneous multivariate diffusion processes, the G\^ateaux differentiability of a Feynman--Kac type expectation with respect to a boundary perturbation was established in~\cite{Costantini2006} together with an expression for the derivative. However, due to the continuity assumptions made in the paper, the result from~\cite{Costantini2006} does not apply to boundary-crossing probabilities. The authors in~\cite{Funaki2006} obtained an infinite-dimensional integration by parts formula for the Brownian measure restricted to the set of paths between two curves, which can be used to obtain the G\^ateaux derivative of~$F$ in directions $h \in \cmSpace$ in the case of the Brownian motion. For time-homogeneous diffusion processes, the authors in~\cite{Borovkov2010} proved that~$F$ is G\^ateaux differentiable in directions $h\in \contSpace^{2}([0,T]),$ possibly with $h(0) \neq 0$ (unlike the case when $h \in \cmSpace$), and obtained a closed-form expression for the G\^ateaux derivative in terms of an expectation of a Brownian meander functional.

The objective of the present paper is to establish the existence of and obtain a representation for the Fr\'echet derivative of the time-dependent boundary non-crossing probability of a general time-inhomogeneous diffusion process. We show that in the case of time-homogeneous diffusions, our representation is equivalent to the one obtained in~\cite{Borovkov2010} which used a different technique in that special situation. However, even in that special case, our new representation is much more convenient for computational purposes. Moreover, our extension of the class of admissible directions $h$ to the Cameron--Martin space makes it possible, when approximating $g$ with a piecewise linear $\bar{g},$ not only bound the approximation error as in \cite{Downes2008} but also get a higher-order approximation using the derivative of $F$ in the direction $\bar{g} - g.$

In the course of the proof, we obtained two identities that are of independent interest. Namely, Lemma~\ref{prop:VL_BD} provides a probabilistic expression for the limit of the spatial derivative of the boundary crossing probability with respect to the initial value of the process, as the initial value approaches the boundary. In Lemma~\ref{MG_rep} we obtain a martingale representation for the indicator of the boundary non-crossing event.

The paper is structured as follows. In Section~\ref{notation} we list our assumptions and notation. In Section~\ref{main_results} we state the main results. Sections~\ref{aux_results} and~\ref{proof_main_results} contain proofs. In Section~\ref{examples} we illustrate our results in the Brownian motion, Brownian bridge and hyperbolic diffusion cases.

\section{Assumptions and notation}\label{notation}

Let $W := \{W_t:t\geq 0\}$ be the standard Brownian process on a stochastic basis $(\Omega, \mathcal{F},\{\mathcal{F}_t\}_{t\geq 0},\Px).$ Here $\{\mathcal{F}_t\}_{t\geq 0}$ is assumed to be the natural filtration of $W$, augmented in the usual sense (see e.g.\ \S\,II.67.4 in~\cite{RogersWilliamsI}). For $T>0,$ $(s,x)\in[0,T]\times \mathbb{R},$ we consider one-dimensional diffusion processes  $\X^{s,x} := \{\X_u^{s,x} : u \in [s,T]\}$ governed by the following stochastic differential equation:
\begin{equation}\label{SDE_X}
	\X_u^{s,x} = x + \int_s^u \muX(t,\X_t^{s,x})\,dt + \int_s^u\sigmaX(t,\X_t^{s,x})\,dW_t, \quad u \in [s,\T],
\end{equation}
where we assume that $\muX :[0,\T]\times \mathbb{R} \to \mathbb{R}$ and $\sigmaX : [0,\T]\times \mathbb{R} \to (0,\infty)$ satisfy the following condition sufficient for the existence and uniqueness of a strong solution to \eqref{SDE_X} (see e.g.\ p.~297--299 in~\cite{EthierKurtz1986}).
\begin{enumerate}
	\item [(C1)] The functions $\muX$ and $\sigmaX$ are continuous in both variables and such that, for some $K < \infty,$
\begin{equation*}
	x\,\muX(t,x) \leq K(1 + x^2),\quad \sigmaX^2(t,x) \leq K(1+x^2)
\end{equation*}	
for all $(t,x)\in [0,\T] \times \mathbb{R}$ and, for any $a>0,$ there exists a $K_a <\infty$ such that
	\begin{equation*}
		\lvert \muX(t,x) - \muX(t,y)\rvert + \lvert \sigmaX^2(t,x) - \sigmaX^2(t,y) \rvert \leq K_a\lvert x-y\rvert
	\end{equation*}
for all $t\in[0,\T],$ $x,y \in (-a,a).$
\end{enumerate}
We will further require the following assumptions:
\begin{enumerate}
	\item [(C2)] There exists  a $\usigma >0$ such that
	\begin{equation*}
	\inf_{(t,x) \in [0,\T]\times \mathbb{R}}\sigmaX(t,x) \geq \usigma.
	\end{equation*}
	\item [(C3)] The functions $\mu$ and $\sigma$ are bounded in $[0,T]\times \mathbb{R}.$ Furthermore there exists an $\alpha \in (0,1)$ such that $\muX,$ $\partial_{x}\muX,$ $\sigmaX,$ $\partial_t \sigmaX,$ $\partial_x \sigmaX,$  $\partial_{x}\partial_{t} \sigmaX,$ $\partial_{x}^2\sigmaX$ are H\"older continuous in $(t,x) \in [0,\T]\times \mathbb{R}$ with H\"older exponents $\alpha$ in $x$ and $\alpha/2$ in $t.$
\end{enumerate}

Let $\contSpace := \contSpace([0,\T])$ denote the Banach space of real-valued continuous functions on $[0,\T]$ endowed with the uniform norm $\| \cdot \|_{\infty},$ and let $\contSpace_0 := \{f \in \contSpace : f(0) =0 \}$. Denote by~$\cmSpace$ the Cameron--Martin space of absolutely continuous functions with square-integrable derivatives:
\begin{equation*}
 \cmSpace := \left\{h \in \contSpace_0: h(t) = \mbox{$\int_0^t$}\ell(s)\,ds ,\ t\in [0,\T],\ \ell\in \LpSpace^2([0,\T],dt)\right\}.
\end{equation*}
Denoting the derivative of~$h$ by~$\dot{h}$ (recall that $h \in \cmSpace$ are pointwise differentiable almost everywhere, see e.g.\ Corollary 12 in Chapter~5 in~\cite{Royden1988}; likewise, in what follows the dot will also denote the partial derivative $\partial_s$ with respect to the time variable~$s$), $\cmSpace$ is a Hilbert space when equipped with the inner product
\begin{equation*}
	\langle h_1,h_2\rangle_{\cmSpace} := \int_0^{\T}\dot{h}_1(s)\dot{h}_2(s)\,ds,\quad h_1, h_2\in \cmSpace.
\end{equation*}
We set $\| h\|_{\cmSpace} := \sqrt{\langle h,h\rangle_{\cmSpace}}$.

Let $\contSpace^2:= \contSpace^2([0,T])$ denote the space of twice continuously differentiable functions on $[0,\T].$ For $b\in \contSpace,$ set
\begin{equation*}
\boundarySet_b :=
		\{f \in \contSpace: f(t) < b(t),\ t\in[0,\T]\}.
\end{equation*}
The main objective of this paper is to investigate the analytic properties of the boundary non-crossing probability functional $F:  \contSpace \to \mathbb{R}$ defined by
\begin{equation*}\label{defn:F}
 F(b):= \Px(\X \in \boundarySet_b) ,\quad b \in  \{ f \in \contSpace :f(0) > x_0\},
\end{equation*}
where $\X := \X^{0,x_0}$ for a fixed $x_0 \in \mathbb{R}.$

Throughout the paper, we will also assume that the following condition concerning the ``main boundary''~$g$ is met:
\begin{itemize}
	\item [(C4)] $g \in \contSpace^2,$ $g(0) > x_0.$
\end{itemize}

\section{Main results}\label{main_results}

For a fixed boundary~$g$ satisfying (C4), consider the family of domains
\begin{equation*}
	D_t := \{(s,x) : s\in (0,t),\ x < g(s)\}\subset \mathbb{R}^2,\quad t\in(0,\T],
\end{equation*}
and the first hitting times
\begin{equation*}
	\tauX^{s,x} :=  \inf\{t \geq s : (t,\X_t^{s,x}) \in \partial \D\},\quad (s,x) \in \D, \quad \tauX := \tauX^{0,x_0}.
\end{equation*}
For a domain $U \subseteq \mathbb{R}^2,$ denote by $\contSpace^{1,2}(U)$ the space of real-valued functions $f(s,x),$ $(s,x)\in U,$ whose derivatives
\begin{equation*}
	\dot{f} := \mbox{$\frac{\partial f}{\partial s}$},\quad f' := \mbox{$\frac{\partial f}{\partial x}$}, \quad f'':= \mbox{$\frac{\partial^2 f}{\partial x^2}$}
\end{equation*}
exist and are continuous in $U$. Let
\begin{equation}\label{defn:v}
	\V(s,x) := \Px(\tauX^{s,x} = T),\quad (s,x) \in \cl(\D),
\end{equation}
denote the probability that the process \eqref{SDE_X} does not hit the boundary~$g$ prior to time~$\T.$ In Lemma~\ref{BKE} below we formally prove that~$v$ belongs to $\contSpace^{1,2}(\D),$ Remark~\ref{remark:history_PDE} after Lemma~\ref{BKE} providing historical comments on the question on the existence of these derivatives.

The following function:
\begin{equation}\label{defn:psi}
	\psi(t) := -\Ex \,\V'(t,X_t)\bm{1}_{\tau > t},\quad t\in [0,\T),
\end{equation}
plays a central role in our analysis. It turns out that the analysis of this function can be simplified by employing  Doob's $h$-transformed measure~$\mathbf{Q}$ that is introduced in Proposition~\ref{doob_transform} below. This transform uses the function  $-v'(s,x)$ (see~\eqref{defn:h}).


For $(t,y)\in \D, $ let
\begin{equation}\label{defn:h}
\hd(t,y) := -v'(t,y),\quad \gamma(t,y) := (\sigmaX^2\hd'/\hd)(t,y),
\end{equation}
and set
\begin{equation}\label{defn:Z}
	\hD_t := \frac{\hd(t\wedge \tau,\X_{t\wedge \tau})}{\hd(0,x_0)},\quad t\in [0,\T).
\end{equation}

\begin{proposition}
\label{doob_transform}
Under conditions \textnormal{(C1)--(C4)}, there exists a unique probability measure $\mathbf{Q}$ on $\mathcal{F}_T$ such that
\begin{equation*}\label{dQdP_t}
	\mathbf{Q}(A) = \Ex \,N_t\bm{1}_{A},\quad A \in \mathcal{F}_t,\quad t\in [0,\T).
\end{equation*}
Moreover, there exists a $(\mathbf{Q},\mathcal{F}_t)$-Brownian motion $\widetilde{W}$ such that $X$ satisfies the following relation for $t\in[0,\T)$\textnormal{:}
\begin{equation}\label{lemma:stopped_X}
	\X_{t\wedge \tau} = x_0 + \int_0^{t\wedge \tau}(\muX + \gamma)(s,\X_s)\,ds + \int_0^{t\wedge \tau}\sigmaX(s,\X_s)\,d\widetilde{W}_s.
\end{equation}
\end{proposition}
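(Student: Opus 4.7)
The plan is to carry out a Doob-type change-of-measure analysis: verify that the candidate density $\hD_t$ is a non-negative $\Px$-martingale on $[0,\T)$, use this to define $\mathbf{Q}$ via Kolmogorov consistency, and then read off the $\mathbf{Q}$-dynamics of $\X$ via Girsanov's theorem.

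The preparatory task is regularity and positivity of $\hd$. Lemma~\ref{BKE} guarantees $\V\in \contSpace^{1,2}(\D)$, and under (C3) one can differentiate the backward Kolmogorov equation once more in $x$ to conclude that $\hd=-\V'$ itself lies in $\contSpace^{1,2}(\D)$, as a classical solution of the companion linear parabolic equation obtained by applying $\partial_x$ to the BKE. Strict positivity of $\hd$ inside $\D$ follows from the strict monotonicity of $\V(s,\cdot)$, itself a consequence of (C1)--(C2) via a coupling/maximum-principle comparison for \eqref{SDE_X}; the Hopf boundary-point lemma then extends this to $\hd(s,g(s))>0$ on the upper parabolic boundary.

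The crux of the proof is showing that $\hD_t$ is a true $\Px$-martingale for $t\in[0,\T)$. Applying It\^o's formula to $\hd(t\wedge\tauX,\X_{t\wedge\tauX})$ and substituting the parabolic identity for $\hd$ from the previous step, one extracts the stochastic-integral part of the decomposition and arrives at $d\hD_t/\hD_t=(\sigmaX\hd'/\hd)(t,\X_t)\,dW_t$ on $\{t<\tauX\}$. The main obstacle, and the most delicate step, is upgrading this local martingale to a genuine one so that $\Ex\,\hD_t=1$ for every $t\in[0,\T)$: for each $\varepsilon\in(0,\T)$, the coefficients $\muX,\sigmaX$ and the function $\hd$ are bounded on $\cl(D_{\T-\varepsilon})$ by (C3) and the regularity established above, and combined with $\sigmaX\geq\usigma$ from (C2) this yields a uniform bound on $\sigmaX\hd'/\hd$ on the stopped set $\{t\leq \T-\varepsilon,\,t<\tauX\}$, so that $\hD_{\cdot\wedge\tauX}$ is a bounded continuous martingale on $[0,\T-\varepsilon]$.

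Granting that $\hD_t$ is a $\Px$-martingale, declaring $d\mathbf{Q}/d\Px|_{\mathcal F_t}:=\hD_t$ defines a consistent projective family for $t\in[0,\T)$; Kolmogorov's extension theorem on the canonical path space, together with the usual augmentation of the filtration, yields the unique probability measure $\mathbf{Q}$ on $\mathcal F_\T$. For \eqref{lemma:stopped_X}, Girsanov's theorem applied with integrand $(\sigmaX\hd'/\hd)(t,\X_t)\bm{1}_{t<\tauX}$ gives that
\[
	\widetilde W_t := W_t - \int_0^{t\wedge\tauX}(\sigmaX\hd'/\hd)(s,\X_s)\,ds
\]
is a $(\mathbf{Q},\mathcal F_t)$-Brownian motion; substituting $dW_t=d\widetilde W_t+(\gamma/\sigmaX)(t,\X_t)\,dt$ into \eqref{SDE_X} and stopping at $\tauX$ yields the announced representation.
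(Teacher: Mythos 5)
Your overall architecture is the same as the paper's: establish regularity of $\hd=-\V'$ (the paper gets $\V'\in\contSpace^{1,2}(\D)$ from Lemma~\ref{lemma:hypoellip} rather than by differentiating the PDE), apply It\^o's formula to $\hd(t\wedge\tau,\X_{t\wedge\tau})$, upgrade to a true martingale on $[0,\T-\varepsilon]$, then use Girsanov and a consistency/extension argument to obtain $\mathbf{Q}$ on $\mathcal{F}_T$. The genuine gap is at the crux, namely your claim that $d\hD_t/\hD_t=(\sigmaX\hd'/\hd)(t,\X_t)\,dW_t$ on $\{t<\tau\}$. The ``companion equation'' you invoke, obtained by applying $\partial_x$ to $\heatX\V=0$, reads $\heatX\hd=-(\partial_x\muX)\,\hd-\sigmaX(\partial_x\sigmaX)\,\hd'$, because the coefficients depend on $x$; substituting this into It\^o's formula does \emph{not} remove the drift but leaves the finite-variation part
\[
-\bigl(\partial_x\muX+(\partial_x\sigmaX)\,\gamma/\sigmaX\bigr)(t,\X_t)\,\hD_t\,dt .
\]
What the driftless form requires --- and with it the local-martingale property of $\hD$, the identity $\Ex\,\hD_t=1$, and thus the very fact that $\mathbf{Q}$ is a probability measure --- is the stronger statement $\heatX\hd=0$ in $\D$, which is exactly what the paper asserts at this point of its proof (as a consequence of $\heatX\V=0$). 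Your companion equation is weaker and does not imply it: the extra terms vanish only when $\muX$ and $\sigmaX$ do not depend on $x$. So, as written, your argument does not establish that $\hD$ is even a local martingale, and this space-time harmonicity of $-\V'$ is precisely the delicate point on which the whole proposition rests; it cannot be passed over by ``extracting the stochastic-integral part''.

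A secondary slip concerns the true-martingale step: boundedness of $\muX,\sigmaX,\hd$ on $\cl(D_{\T-\varepsilon})$ together with $\sigmaX\geq\usigma$ does \emph{not} give a uniform bound on $\sigmaX\hd'/\hd$, since for that you would need $\hd$ bounded away from zero on the whole of $\cl(D_{\T-\varepsilon})$, which fails ($\hd(s,x)\to 0$ as $x\to-\infty$) and is not what your Hopf-lemma argument yields (strict positivity, not a uniform lower bound on an unbounded domain). The paper avoids this entirely: granting the driftless decomposition, $\hD_{t\wedge(\T-\varepsilon)}$ is a \emph{bounded} local martingale simply because $\hd$ itself is bounded on $\cl(D_{\T-\varepsilon})$ by Lemma~\ref{lemma:boundary_grad}, hence a true martingale, with no control of the Girsanov kernel required; you should argue this way. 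Your positivity discussion is a reasonable supplement (it is needed for $\gamma$ to be well defined along the stopped path; the paper obtains $\hd(t,g(t))>0$ from Lemma~\ref{prop:VL_BD}), and your concluding steps --- Girsanov with kernel $(\gamma/\sigmaX)\bm{1}_{t<\tau}$ and the Daniell--Kolmogorov consistency/extension --- coincide with the paper's.
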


Using the Doob $h$-transformed measure~$\mathbf{Q},$ we obtain a simple representation for the function~$\psi$ in the following theorem.

\begin{theorem}
\label{thm:Z}
Assume that conditions \textnormal{(C1)--(C4)} are satisfied. Then the expectation in \eqref{defn:psi} is finite and the function $\psi$ admits the following representation\textnormal{:}
	\begin{equation}\label{eqn:psi_rep}
		\psi(t) = -v'(0,x_0)\mathbf{Q}(\tau >t),\quad t \in [0,\T).
	\end{equation}
Moreover, $\psi(\T-) = \mathbf{Q}(\tauX \geq T)  =0,$  $\psi$ is continuously differentiable in $(0,T),$ and
\begin{equation}\label{eqn:psi_dot_1}
	\dot{\psi}(t) = v'(0,x_0)f_{\tau}^{\mathbf{Q}}(t), \quad t\in(0,\T),
\end{equation}
where $f_{\tau}^{\mathbf{Q}}(t):= \frac{d}{dt}\mathbf{Q}(\tau \leq t)$ is the $\mathbf{Q}$-density of $\tau$.
\end{theorem}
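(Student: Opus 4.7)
The plan is to exploit Doob's $h$-transform from Proposition~\ref{doob_transform} to convert the expectation $-\mathbf{E}\,v'(t,X_t)\mathbf{1}_{\tau>t}$ into a tail probability under $\mathbf{Q}$ and then read off its derivative as a first-passage density. The opening observation is that on $\{\tau>t\}$ one has $X_{t\wedge\tau}=X_t$, so the definitions of $h$ and $N_t$ yield
\[
N_t\mathbf{1}_{\tau>t}=\frac{h(t,X_t)}{h(0,x_0)}\mathbf{1}_{\tau>t}=\frac{-v'(t,X_t)}{-v'(0,x_0)}\mathbf{1}_{\tau>t}.
\]
Taking $\mathbf{P}$-expectation and multiplying through by $-v'(0,x_0)$ produces \eqref{eqn:psi_rep} at once; finiteness of the expectation in \eqref{defn:psi} is then automatic, since $\mathbf{Q}$ is a probability measure and $v'(0,x_0)$ is finite by Lemma~\ref{BKE}.

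For the boundary behaviour as $t\uparrow T$, I would use the a.s.\ martingale limit $N_T:=\lim_{t\uparrow T}N_t$ and identify it case by case. On $\{\tau<T\}$ the process $N$ has already been stopped at $\tau$ with value $h(\tau,g(\tau))/h(0,x_0)$; on $\{\tau=T\}$ one has $X_T<g(T)$ a.s., and since the very definition \eqref{defn:v} gives $v(T,x)\equiv 1$ on $\{x<g(T)\}$, boundary regularity of $v$ (Lemma~\ref{BKE}) forces $v'(t,X_t)\to 0$, whence $N_T=0$ on this event. Because Proposition~\ref{doob_transform} identifies $N_T$ with the Radon--Nikodym density of $\mathbf{Q}$ on $\mathcal{F}_T$, this gives $\mathbf{Q}(\tau=T)=\mathbf{E}\,N_T\mathbf{1}_{\tau=T}=0$, and continuity from above of $\mathbf{Q}(\tau>\cdot)$ then yields $\psi(T-)=0$.

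Differentiability of $\psi$ reduces via \eqref{eqn:psi_rep} to showing that $t\mapsto\mathbf{Q}(\tau\leq t)$ is $C^1$ on $(0,T)$ with derivative $f_\tau^{\mathbf{Q}}$. By Proposition~\ref{doob_transform}, the $\mathbf{Q}$-law of the stopped process solves the SDE \eqref{lemma:stopped_X} with drift $\mu+\gamma$ and diffusion $\sigma$. Together with condition (C3), the $C^{1,2}$-regularity of $v$ on $D$, and a Hopf-type positivity of $h=-v'$ up to the lateral boundary (cognate to Lemma~\ref{prop:VL_BD} established later), this makes the coefficients of the $\mathbf{Q}$-diffusion regular enough for $\tau$, its first passage time across the $C^2$ boundary $g$, to admit a continuous density $f_\tau^{\mathbf{Q}}$ on $(0,T)$ by the classical first-passage density results cited in the introduction. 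Differentiating \eqref{eqn:psi_rep} then yields \eqref{eqn:psi_dot_1}. The main technical obstacle lies precisely here: certifying enough regularity of the transformed drift $\gamma=\sigma^2 h'/h$ near the lateral boundary (where $h$ attains a strictly positive Hopf-type limit) so that a standard first-passage density result applies; Steps~1 and~2 are essentially bookkeeping once Proposition~\ref{doob_transform} is in hand.
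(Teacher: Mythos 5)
Your derivation of \eqref{eqn:psi_rep}, and with it the finiteness of the expectation in \eqref{defn:psi}, is exactly the paper's argument (the paper cites the boundedness of $v'$ on $\cl(D_{T-\varepsilon})$ from Lemma~\ref{lemma:boundary_grad}, but your route via $N_t\geq 0$ is fine). The trouble is in the other two claims. For $\psi(T-)=\mathbf{Q}(\tau\geq T)=0$ you write $\mathbf{Q}(\tau\geq T)=\Ex\,N_T\bm{1}_{\tau\geq T}$ with $N_T:=\lim_{t\uparrow T}N_t$, but Proposition~\ref{doob_transform} only identifies $N_t$ as the density of $\mathbf{Q}$ on $\mathcal{F}_t$ for $t<T$; extending this to $\mathcal{F}_T$ requires the martingale $N$ to be uniformly integrable (closable), which is not available at this stage --- it is in effect a consequence of the theorem itself combined with Theorem~\ref{thm:factorisation}. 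For a general nonnegative martingale one only has $\mathbf{Q}(A)\geq\Ex\,N_T\bm{1}_A$ for $A\in\mathcal{F}_T$, with a possible singular part, so even granting $N_T=0$ on $\{\tau=T\}$ the inequality goes the wrong way: $\mathbf{Q}$-mass could sit on paths that approach the boundary only at time $T$ (exactly the phenomenon seen for non-closable exponential martingales). Moreover, your sub-claim that $v'(t,X_t)\to0$ on $\{\tau=T\}$ does not follow from Lemma~\ref{BKE} or Lemma~\ref{lemma:boundary_grad}, which control $v'$ only on $D_{T-\varepsilon}$; the Gaussian bounds by themselves only give $|v'(t,x)|=O((T-t)^{-1/2})$ for fixed $x<g(T)$, so an additional estimate would be needed. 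The paper avoids all of this by computing $\psi(T-)$ analytically: after integration by parts, $\psi(t)=\int_{-\infty}^{g(t)}v(t,y)\,\partial_yq(0,x_0;t,y)\,dy$, the integrand is dominated using the Gaussian bounds on $\partial_yG^*$, and dominated convergence gives $\psi(T-)=0$; the identity $\mathbf{Q}(\tau\geq T)=0$ is then deduced from \eqref{eqn:psi_rep}, not the other way around.

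The differentiability part is also left with a genuine hole, which you acknowledge yourself: you would apply classical first-passage density results to the $\mathbf{Q}$-diffusion with drift $\muX+\gamma$, $\gamma=\sigmaX^2\hd'/\hd$, but these results require bounded H\"older coefficients of type (C1)--(C3), while $\gamma$ is defined only on $\D$, is not known to be H\"older up to the lateral boundary, and blows up as $t\uparrow T$ (since $v'(t,g(t))\sim -c(T-t)^{-1/2}$ by Lemma~\ref{prop:VL_BD}); certifying the needed regularity is precisely what you leave open. The paper bypasses $\gamma$ altogether: under $\mathbf{Q}$ the taboo transition density is $q^{\hd}(s,x;t,y)=\frac{\hd(t,y)}{\hd(s,x)}q(s,x;t,y)$, which by Lemmata~\ref{lemma:boundary_grad} and~\ref{lemma:existence_taboo} is continuously differentiable in $y$ up to the boundary, so the forward-equation computation in the proof of Lemma~\ref{lemma:fpt_cont} can be repeated to give existence and continuity of $f_{\tau}^{\mathbf{Q}}$ and hence \eqref{eqn:psi_dot_1}. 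Until you either supply the regularity of the transformed coefficients or switch to an argument of this density-transfer kind, the second and third assertions of the theorem remain unproved in your write-up.
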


The next theorem provides a useful representation for $f_{\tau}^{\mathbf{Q}}$. Note that, generally speaking,  the density~$f_{\tau}$ is more accessible than~$f_{\tau}^{\mathbf{Q}}$.

\begin{theorem}
\label{thm:factorisation}
Under conditions \textnormal{(C1)--(C4)},
\begin{equation}\label{eqn:doob_h}
	f_{\tau}^{\mathbf{Q}}(t) = \frac{v'(t,g(t))}{v'(0,x_0)}f_{\tau}(t), \quad t\in (0,T),
\end{equation}
where $f_{\tau}(t) := \frac{d}{dt}\Px(\tau \leq t)$ exists and is continuous.
\end{theorem}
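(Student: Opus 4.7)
The plan is to derive the factorisation \eqref{eqn:doob_h} by computing $\mathbf{Q}(\tauX\leq t)$ directly from the change of measure in Proposition~\ref{doob_transform} and then differentiating in $t$. The key observation is that the Radon--Nikodym density $\hD$ defined in \eqref{defn:Z} is already stopped at $\tauX$, so $\hD_t=\hD_{\tauX\wedge t}$ and in particular $\hD_t=\hD_{\tauX}$ on $\{\tauX\leq t\}$. Moreover, on this event we have $\tauX<\T$, so path continuity of $\X$ combined with the definition of $\tauX$ forces $\X_{\tauX}=g(\tauX)$; recalling that $\hd=-\V'$, this gives $\hD_{\tauX}=\V'(\tauX,g(\tauX))/\V'(0,x_0)$ on $\{\tauX\leq t\}$.

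Fix $t\in(0,\T)$. Proposition~\ref{doob_transform} combined with the observation above yields
\begin{equation*}
\mathbf{Q}(\tauX\leq t)=\Ex\,\hD_t\bm{1}_{\tauX\leq t}=\Ex\biggl[\frac{\V'(\tauX,g(\tauX))}{\V'(0,x_0)}\bm{1}_{\tauX\leq t}\biggr].
\end{equation*}
Granted the existence and continuity of $f_{\tauX}$ on $(0,\T)$, the right-hand side equals $\int_0^t \V'(s,g(s))\V'(0,x_0)^{-1}f_{\tauX}(s)\,ds$; differentiation in $t$, legitimate by the continuity of the map $s\mapsto \V'(s,g(s))$ which follows from the $\contSpace^{1,2}(\D)$-regularity of $\V$ provided by Lemma~\ref{BKE} together with $g\in\contSpace^2$, then gives \eqref{eqn:doob_h}.

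It therefore remains to establish that $f_{\tauX}$ exists and is continuous on $(0,\T)$. Under (C1)--(C4) the diffusion is uniformly non-degenerate, the coefficients are H\"older-smooth, and the boundary lies in $\contSpace^2$: this is the classical regime in which $\tauX$ admits a continuous density. One route is to invoke the Volterra integral equation of the second kind for the first-passage density obtained in~\cite{Gutierrez1997}, whose kernel and free term are continuous on $(0,\T)$ under our hypotheses, so that $f_{\tauX}\in \contSpace((0,\T))$; alternatively, one can combine the $\contSpace^{1,2}$-regularity of $\V$ with a Green's identity in $\D_t$ to express $f_{\tauX}(t)$ as a boundary flux in which $\tfrac{1}{2}\sigmaX^2(t,g(t))\V'(t,g(t))$ appears explicitly, from which continuity is transparent. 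This regularity step is the main obstacle in the proof; the identity itself then follows from the essentially bookkeeping optional-stopping computation sketched above.
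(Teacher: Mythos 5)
Your derivation of the identity itself is exactly the paper's argument: since $N$ is stopped at $\tau$ and $X_{\tau}=g(\tau)$, Proposition~\ref{doob_transform} gives $\mathbf{Q}(\tau\leq t)=\Ex\bigl[v'(\tau,g(\tau))v'(0,x_0)^{-1}\bm{1}_{\tau\leq t}\bigr]$, which one rewrites as an integral against $f_{\tau}$ and differentiates (the paper works with the increment $\delta^{-1}\mathbf{Q}(\tau\in(t,t+\delta))$, which is the same computation). However, two of your supporting steps have genuine gaps. First, you attribute the continuity of $s\mapsto v'(s,g(s))$ to the $\contSpace^{1,2}(\D)$-regularity of $v$ from Lemma~\ref{BKE} together with $g\in\contSpace^2$. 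That is interior regularity only; it gives neither the existence nor the continuity in $s$ of the boundary limit $v'(s,g(s)):=\lim_{x\uparrow g(s)}v'(s,x)$, which is also what makes $N_t$ well defined on $\{\tau\leq t\}$. This is precisely the content of Lemma~\ref{lemma:boundary_grad}, proved via the Gaussian bounds on the derivatives of the Green's function, and it is the lemma the paper invokes at this point.

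Second, the existence and continuity of $f_{\tau}$ is part of the assertion of the theorem, and you leave it essentially unproved. Invoking the Volterra equation of~\cite{Gutierrez1997} does not by itself yield a continuous density under (C1)--(C4): those results come with their own regularity hypotheses on the transition density, which you do not verify, and in any case one must argue that the distribution of $\tau$ has a density before identifying it with a solution of the integral equation. Your alternative sketch also misidentifies the flux: the correct representation, established in the paper's Lemma~\ref{lemma:fpt_cont} via the adjoint Green's function $G^*$ and the forward equation, is
\begin{equation*}
	f_{\tau}(t) = -\tfrac{1}{2}\sigma^2(t,g(t))\lim_{y\uparrow g(t)}\partial_y q(0,x_0;t,y),
\end{equation*}
a boundary flux of the taboo transition density $q$ (whose differentiability up to the boundary is itself proved in Lemma~\ref{lemma:existence_taboo}), not an expression involving $v'(t,g(t))$. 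So the bookkeeping part of your proof matches the paper, but the hardest ingredients --- the boundary regularity of $v'$ and the existence and continuity of $f_{\tau}$ --- are missing or misattributed.
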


\begin{remark}
Relation \eqref{eqn:doob_h} is similar to the assertion of Corollary 4 in~\cite{Borovkov2010}, which states that
\begin{equation*}
	f_{\tau^{\circ}}(t) = \frac{p(T-t,g(t),y)}{p(T,x,y)}f_{\tau}(t), \quad t\in (0,\T),
\end{equation*}
where $f_{\tau^{\circ}}(t)$ is the density of the first hitting time~$\tau^{\circ}$ of the boundary~$g$ for the diffusion bridge in the time-homogeneous case where the process~$X$ obeys $dX_t = \mu(X_t)\,dt + dW_t$ and is pinned at $X_0 =x,$ $X_T= y,$ $p(t,x',y')$ denoting the transition density of~$X.$ Indeed, note that $p(\,\cdot\,,\cdot\,,y)$ satisfies the backward Kolmogorov equation and hence can also be used to construct a Doob $h$-transform.
\end{remark}

The following theorem establishes the G\^ateaux differentiability of~$F$ and an integral representation for the weak derivative.

\begin{theorem}\label{thm:dF}
Let conditions \textnormal{(C1)--(C4)} be satisfied and $h\in \cmSpace \cup \contSpace^2.$ Then there exists the G\^ateaux derivative of $F$ at $g$ in the direction $h$ given by
\begin{equation}\label{riesz_markov_rep}
	\nabla_h F(g) := \lim_{\delta \to 0 } \frac{F(g + \delta h) - F(g)}{\delta}  = -\int_0^{\T} h(t)\dot{\psi}(t)\,dt,
\end{equation}
where
\begin{equation}\label{eqn:psi_dot_2}
	\dot{\psi}(t) = v'(t,g(t))f_{\tau}(t).
\end{equation}
\end{theorem}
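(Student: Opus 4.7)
The plan is to begin by noting that formula~\eqref{eqn:psi_dot_2} is immediate once Theorems~\ref{thm:Z} and~\ref{thm:factorisation} are in hand: chaining~\eqref{eqn:psi_dot_1} and~\eqref{eqn:doob_h} gives
\begin{equation*}
\dot\psi(t) = v'(0,x_0)\, f_\tau^{\mathbf{Q}}(t) = v'(t,g(t))\, f_\tau(t).
\end{equation*}
The substantive content of Theorem~\ref{thm:dF} is therefore the G\^ateaux derivative identity~\eqref{riesz_markov_rep}.

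For the derivative, I would decompose the increment $F(g+\delta h)-F(g)$ probabilistically via the strong Markov property at~$\tau$. Assume first $h \ge 0$ on $[0,T]$; the general case then follows by writing $h = h^+ - h^-$ and using linearity of the limit in~$h$. For $\delta > 0$ small, the inclusion $\boundarySet_g \subset \boundarySet_{g+\delta h}$ together with strong Markov at~$\tau$ and the existence of the density $f_\tau$ provided by Theorem~\ref{thm:factorisation} yield
\begin{equation*}
F(g+\delta h) - F(g) = \Px(\tau \le T,\ \X \in \boundarySet_{g+\delta h}) = \int_0^T f_\tau(t)\, v_\delta(t,g(t))\,dt,
\end{equation*}
where $v_\delta(s,x) := \Px(\X_u^{s,x} < g(u)+\delta h(u) \text{ for all } u\in[s,T])$ is the non-crossing probability for the perturbed boundary started at $(s,x)$. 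Here one uses that, since $\delta h \ge 0$, the pre-$\tau$ portion of $\X$ already lies strictly below $g+\delta h$, so only the event that the post-$\tau$ trajectory stays below $g+\delta h$ contributes.

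The crux is to show that $\delta^{-1} v_\delta(t, g(t)) \to -h(t)\, v'(t, g(t))$ as $\delta \downarrow 0$ in a sense strong enough to exchange limit and integral, e.g.\ via dominated convergence. A first-order spatial Taylor expansion about $g(t)+\delta h(t)$, using the boundary identity $v_\delta(t, g(t)+\delta h(t))=0$, yields
\begin{equation*}
v_\delta(t,g(t)) = -\delta h(t)\, v_\delta'(t,g(t)+\delta h(t)) + O(\delta^2),
\end{equation*}
where $v_\delta'$ is understood as the one-sided derivative from inside the domain bounded by $g+\delta h$. The main obstacle is proving that $v_\delta'(t,g(t)+\delta h(t)) \to v'(t,g(t))$ with enough uniformity in~$t$. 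I would establish this stability by straightening the moving boundary via the change of variable $x \mapsto x - \delta h(s)$---which keeps the transformed drift and diffusion coefficients in the regularity class of~(C3)---and invoking Schauder-type parabolic estimates for the difference $v_\delta - v$. The existence of the limiting one-sided boundary derivative $v'(t,g(t))$ itself, together with a tractable probabilistic representation, is supplied by Lemma~\ref{prop:VL_BD}.

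Finally, the two admissible classes $h \in \cmSpace$ and $h \in \contSpace^2$ are handled simultaneously, since the integrand uses only the pointwise (continuous, bounded) values of~$h$ on $[0,T]$; the potentially nonzero value $h(0)$ in the $\contSpace^2$ case merely perturbs the initial boundary height to $g(0)+\delta h(0)$, which remains strictly greater than $x_0$ for small~$\delta$ by~(C4). Combining the probabilistic decomposition with the asymptotic expansion gives
\begin{equation*}
\nabla_h F(g) = -\int_0^T h(t)\, v'(t,g(t))\, f_\tau(t)\,dt = -\int_0^T h(t)\,\dot\psi(t)\,dt,
\end{equation*}
which is~\eqref{riesz_markov_rep}.
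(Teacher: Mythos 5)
Your derivation of \eqref{eqn:psi_dot_2} by chaining \eqref{eqn:psi_dot_1} and \eqref{eqn:doob_h} is fine, and your strong--Markov decomposition $F(g+\delta h)-F(g)=\int_0^T f_\tau(t)\,v_\delta(t,g(t))\,dt$ is a genuinely different route from the paper's, which never perturbs the boundary at all: it shifts the \emph{process} by a Girsanov change of measure, passes to the limit via uniform integrability of $(\mathcal{E}(\delta M^h)_T-1)/\delta$, and then uses the martingale representation of $\bm{1}_{\tau=T}$ from Lemma~\ref{MG_rep} together with It\^o's isometry and integration by parts. The trouble is that your route has a genuine gap exactly at its crux, and precisely for the main case $h\in\cmSpace$. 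Your stability step requires the perturbed boundary $g+\delta h$ to be regular enough that the one-sided boundary derivative $v_\delta'$ exists, is continuous, and admits a Taylor expansion with an error $O(\delta^2)$ uniform in $t$. All of the machinery that delivers such facts for $v$ (the Green's function construction, the Gaussian bounds, Lemma~\ref{lemma:boundary_grad}) rests on the boundary being in $\contSpace^2$: after your straightening $x\mapsto x-\delta h(s)$ the new drift is $\muX(s,x+g(s)+\delta h(s))-\dot g(s)-\delta\dot h(s)$, and for $h\in\cmSpace$ the derivative $\dot h$ is merely in $\LpSpace^2$, so the transformed coefficients are not even continuous in time, let alone H\"older as required in (C3); Schauder-type estimates are simply unavailable, and neither the existence of $v_\delta'(t,g(t)+\delta h(t))$ nor its convergence to $v'(t,g(t))$ --- uniformly enough to dominate $f_\tau(t)\,v_\delta(t,g(t))/\delta$ near $t=T$, where $v'(t,g(t))$ blows up like $(T-t)^{-1/2}$ by Lemma~\ref{prop:VL_BD} --- is established. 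The paper's Girsanov argument exists precisely to avoid this: it only needs $\dot h\in \LpSpace^2$, which is why $\cmSpace$ is the natural class there.

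A second, smaller gap is the reduction to $h\ge 0$ via $h=h^+-h^-$. G\^ateaux differentiability in the directions $h^+$ and $h^-$ separately does not give differentiability in the direction $h^+-h^-$ without an additional continuity-in-$g$ (or uniformity) argument, since the increment $F(g+\delta h^+-\delta h^-)-F(g+\delta h^+)$ is a perturbation of the \emph{moving} boundary $g+\delta h^+$; compare the paper's Step~2, which handles an analogous issue with the mean value theorem plus continuity of $b\mapsto\nabla_{h_0}F(b)$ in the uniform topology. Moreover $h^{\pm}$ generally leave the admissible class: $h^+$ has kinks at the zero crossings of $h$, so it need not lie in $\contSpace^2$, and if $h\in\contSpace^2$ with $h(0)>0$ then $h^+(0)\neq 0$, so $h^+\notin\cmSpace$ either. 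For $h\in\contSpace^2$ your scheme could plausibly be repaired (perturb by $h+c\ge 0$ with a constant $c$ and treat the constant shift separately, as the paper does using Lemma~\ref{prop:VL_BD}), but as written the argument does not cover $h\in\cmSpace$, which is the substantive new content of Theorem~\ref{thm:dF}.
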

It follows immediately from Theorems \ref{thm:Z}, \ref{thm:factorisation} and \ref{thm:dF} that the following result holds true.

\begin{corollary}\label{corollary_prob}
Under conditions \textnormal{(C1)--(C4)},
\begin{equation}\label{eqn:VL}
	\nabla_h F(g)= -v'(0,x_0)\Ex ^{\mathbf{Q}}h(\tau),\quad h \in \cmSpace \cup \contSpace^2,
\end{equation}
where $\Ex ^{\mathbf{Q}}$ denotes the expectation under the measure $\mathbf{Q}.$	
\end{corollary}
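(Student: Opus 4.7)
The plan is to chain together the three preceding theorems; the corollary is really a two-line assembly, and I will spell out which identity does what.

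First, by Theorem \ref{thm:dF}, for any $h \in \cmSpace \cup \contSpace^2$ the G\^ateaux derivative admits the integral representation
\begin{equation*}
\nabla_h F(g) = -\int_0^\T h(t)\,\dot{\psi}(t)\, dt.
\end{equation*}
Theorem \ref{thm:Z} gives the explicit formula $\dot{\psi}(t) = \V'(0,x_0)\, f_{\tauX}^{\mathbf{Q}}(t)$ on $(0,\T)$. Substituting this immediately yields
\begin{equation*}
\nabla_h F(g) = -\V'(0,x_0) \int_0^\T h(t)\, f_{\tauX}^{\mathbf{Q}}(t)\, dt.
\end{equation*}

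The remaining task is to recognise the integral on the right as $\Ex^{\mathbf{Q}} h(\tauX)$. For this, Theorem \ref{thm:Z} also asserts that $\psi(\T-) = \mathbf{Q}(\tauX \geq \T) = 0$, so under $\mathbf{Q}$ the hitting time $\tauX$ is almost surely strictly less than $\T$. Consequently $f_{\tauX}^{\mathbf{Q}}$ integrates to $1$ on $(0,\T)$ and is the full probability density of $\tauX$ under $\mathbf{Q}$. Since any $h \in \cmSpace \cup \contSpace^2$ is continuous on the compact interval $[0,\T]$ and hence bounded, the integral equals $\Ex^{\mathbf{Q}} h(\tauX)$, which is precisely \eqref{eqn:VL}.

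Theorem \ref{thm:factorisation} does not strictly enter this shortest path but provides an equivalent alternative route: combining \eqref{eqn:psi_dot_2}, $\dot{\psi}(t) = \V'(t,g(t)) f_{\tauX}(t)$, with the identity $f_{\tauX}^{\mathbf{Q}}(t) = (\V'(t,g(t))/\V'(0,x_0)) f_{\tauX}(t)$ of Theorem \ref{thm:factorisation} recovers the same expression for $\dot{\psi}$ and thus the same conclusion. There is no serious obstacle in the proof; the only subtlety worth noting is the verification that $f_{\tauX}^{\mathbf{Q}}$ is a genuine probability density, for which the equality $\mathbf{Q}(\tauX \geq \T) = 0$ from Theorem \ref{thm:Z} is essential. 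The corollary is best read as the probabilistic reinterpretation, made possible by the Doob $h$-transform of Proposition \ref{doob_transform}, of the analytic formula in Theorem \ref{thm:dF}.
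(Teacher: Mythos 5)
Your proposal is correct and matches the paper's route: the paper simply asserts that the corollary follows immediately from Theorems~\ref{thm:Z}, \ref{thm:factorisation} and~\ref{thm:dF}, which is precisely the chain you spell out (substituting \eqref{eqn:psi_dot_1} into \eqref{riesz_markov_rep} and using $\mathbf{Q}(\tau \geq T)=0$ to identify the integral as $\Ex^{\mathbf{Q}}h(\tau)$). Your observation that Theorem~\ref{thm:factorisation} is only needed if one starts from \eqref{eqn:psi_dot_2} rather than \eqref{eqn:psi_dot_1} is a fair and accurate refinement, not a deviation.
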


Observe that the effect of the value of the direction $h$ at time $t\in (0,T)$ on the derivative $\nabla_hF$ is proportional to the value of the first hitting time $\tau$ density $f_{\tau}^{\mathbf{Q}}(t)$ under measure $\mathbf{Q}.$ For further discussion of this observation in the special cases of the Brownian motion and Brownian bridge processes and linear boundary~$g,$ see Examples~\ref{exam_1} and~\ref{exam_2} in Section~\ref{examples}.

{The following theorem shows that $F$ is in fact $\cmSpace$-differentiable (see e.g.~\cite[p.~323]{Ikeda1981}), i.e.\ it is Fr\'echet differentiable when perturbations are restricted to space~$\cmSpace.$
\begin{theorem}\label{thm:dF_f}
Let conditions \textnormal{(C1)--(C4)} be satisfied. Then, as  $\varepsilon \to 0,$
\begin{equation*}
	\sup_{h \in \cmSpace,\, \| h\|_{ \cmSpace} = 1}\lvert F(g + \varepsilon h) - F(g) - \varepsilon\nabla_h F(g) \rvert = o(\varepsilon).
\end{equation*}
\end{theorem}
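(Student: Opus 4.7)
The plan is to upgrade Theorem~\ref{thm:dF} from Gâteaux to Fréchet differentiability on the unit ball of~$\cmSpace$. First, using $h(0)=0$ together with $\psi(T-)=0$ from Theorem~\ref{thm:Z}, an integration by parts in~\eqref{riesz_markov_rep} rewrites
\begin{equation*}
\nabla_h F(g) = -\int_0^T h(t)\,\dot\psi(t)\,dt = \int_0^T \dot h(t)\,\psi(t)\,dt = \langle h,\phi\rangle_{\cmSpace},
\end{equation*}
where $\phi(t):=\int_0^t \psi(s)\,ds$ lies in~$\cmSpace$ because $\psi$ is bounded continuous on $[0,T]$. This already identifies the candidate Fréchet derivative as a continuous linear functional on~$\cmSpace$ with operator norm $\|\psi\|_{\LpSpace^2}$, which is exactly what the Riesz representation predicts.

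Second, I would prove the uniform remainder estimate by applying the one-dimensional fundamental theorem of calculus along the segment $s\mapsto g+sh$. Setting $R(\varepsilon,h):=F(g+\varepsilon h)-F(g)-\varepsilon\nabla_h F(g)$ and using the analogous representation $\nabla_h F(b)=\int_0^T\dot h(t)\psi_b(t)\,dt$ at each shifted boundary $b=g+sh$, one gets
\begin{equation*}
R(\varepsilon,h) = \int_0^\varepsilon\bigl[\nabla_h F(g+sh)-\nabla_h F(g)\bigr]\,ds = \int_0^\varepsilon\int_0^T \dot h(t)\bigl[\psi_{g+sh}(t)-\psi_g(t)\bigr]\,dt\,ds,
\end{equation*}
so Cauchy--Schwarz gives
\begin{equation*}
\sup_{\|h\|_{\cmSpace}=1}|R(\varepsilon,h)| \leq \int_0^\varepsilon \sup_{\|h\|_{\cmSpace}\leq 1}\|\psi_{g+sh}-\psi_g\|_{\LpSpace^2[0,T]}\,ds.
\end{equation*}
The entire statement therefore reduces to $\LpSpace^2$-continuity of $b\mapsto\psi_b$ at $g$, uniformly over boundaries of the form $b=g+sh$ with $\|h\|_{\cmSpace}\leq 1$ and $|s|\to 0$. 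Since $|h(t)-h(u)|\leq \sqrt{|t-u|}\|h\|_{\cmSpace}$, the admissible perturbations form a uniformly bounded equicontinuous family in~$\contSpace$, and the continuity should follow from the representation $\psi_b(t)=-v'_b(0,x_0)\mathbf{Q}_b(\tau_b>t)$ in Theorem~\ref{thm:Z} combined with standard Schauder/Hölder estimates for the backward Kolmogorov PDE used in Lemma~\ref{BKE}, all of which are uniform over bounded equicontinuous families of boundaries under (C3).

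The main obstacle is that (C4) requires the base boundary to lie in~$\contSpace^2$, whereas the shifted $g+sh$ is typically only continuous when $h\in\cmSpace$, so Theorem~\ref{thm:dF} does not directly apply at the intermediate boundaries in the integral above. I would circumvent this by a smoothing argument: given a unit-norm $h\in\cmSpace$, approximate it by $\tilde h\in\cmSpace\cap\contSpace^2$ with $\tilde h(0)=0$ and $\|h-\tilde h\|_{\cmSpace}$ arbitrarily small, so that the whole segment $\{g+s\tilde h: s\in[0,\varepsilon]\}$ lies in~$\contSpace^2$ and the chain-rule step is legitimate for~$\tilde h$; then control the passage to~$h$ via
\begin{equation*}
|R(\varepsilon,h)-R(\varepsilon,\tilde h)| \leq |F(g+\varepsilon h)-F(g+\varepsilon\tilde h)| + \varepsilon|\nabla_{h-\tilde h} F(g)| \leq C\varepsilon\|h-\tilde h\|_{\cmSpace},
\end{equation*}
using the $\|\cdot\|_\infty$-Lipschitz bound on~$F$ from~\cite{Borovkov2005} together with the embedding $\|\cdot\|_\infty\leq\sqrt{T}\|\cdot\|_{\cmSpace}$ and the Cauchy--Schwarz bound on $\nabla_{h-\tilde h}F(g)$. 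The delicate technical core will be turning the qualitative stability of the backward Kolmogorov PDE into the required \emph{uniform} $\LpSpace^2$-continuity of $\psi_b$ in~$b$, and this is where the bulk of the work lies.
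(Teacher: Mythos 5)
Your skeleton --- writing $F(g+\varepsilon h)-F(g)=\int_0^\varepsilon \nabla_h F(g+sh)\,ds$ and reducing the Fr\'echet statement to continuity of the directional derivative in the boundary, uniformly over the unit ball of $\cmSpace$ --- is the same as the paper's. The gap is in how you obtain that continuity. You route it through the PDE-based representation $\nabla_h F(b)=\int_0^T\dot h(t)\,\psi_b(t)\,dt$, which forces you (i) to smooth $h$ so that the intermediate boundaries lie in $\contSpace^2$, and (ii) to prove $\LpSpace^2$-continuity of $b\mapsto\psi_b$ \emph{uniformly} over the relevant perturbations. Step (ii) is precisely the content separating Fr\'echet from G\^ateaux differentiability here, and it is left unproved; worse, the uniformity you need is over a family that is not bounded in the norms on which the Garroni--Menaldi/Schauder bounds depend. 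After straightening, replacing $g$ by $g+s\tilde h$ perturbs the drift by $-s\dot{\tilde h}$, and the H\"older/regularity constants entering those estimates involve $\dot{\tilde h}$ and $\ddot{\tilde h}$; these are not controlled by $\|\tilde h\|_{\cmSpace}$, and your smoothing step requires $\tilde h$ to approximate arbitrary unit-ball elements of $\cmSpace$ arbitrarily well, so the $C^2$-norms of the admissible $\tilde h$ blow up exactly when the approximation improves. Consequently, for each fixed $h$ your argument recovers only the already-known G\^ateaux statement plus an error $C\varepsilon\|h-\tilde h\|_{\cmSpace}$, with an $o(\varepsilon)$ term whose rate depends on $\tilde h$; the required uniform modulus never materialises. (A secondary issue: the sup-norm Lipschitz bound you invoke from \cite{Borovkov2005} is for the Wiener process and is not available under (C1)--(C4); that particular step could, however, be repaired by bounding $|\nabla_{h-\tilde h}F(\cdot)|$ directly.)

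The paper closes exactly this gap by never evaluating $\psi_b$ at perturbed boundaries. Step~1 of the proof of Theorem~\ref{thm:dF} gives the purely probabilistic representation \eqref{hermite_rep}, $\nabla_h F(b)=\Ex [M_T^h; X\in S_b]$, for \emph{every} continuous boundary $b$ (it uses only Girsanov and uniform integrability, no PDE regularity), so $\theta\mapsto F(g+\theta h)$ is differentiable along the whole segment without any smoothing, and by Cauchy--Schwarz (inequality~\eqref{ineq:cauchy_ineq})
\[
\bigl|\nabla_hF(g+\varepsilon u h)-\nabla_hF(g)\bigr|\le \frac{\|h\|_{\cmSpace}}{\usigma}\sqrt{\Px\bigl(X\in S_{g+\varepsilon u h}\triangle S_g\bigr)} .
\]
Since $\|\varepsilon u h\|_\infty\le|\varepsilon|\sqrt{T}$ on the unit ball of $\cmSpace$, the symmetric differences are contained in $S_{g+|\varepsilon|\sqrt T}\setminus S_{g-|\varepsilon|\sqrt T}\downarrow\partial S_g$ with $\Px(X\in\partial S_g)=0$, so the right-hand side tends to $0$ uniformly in $u\in[0,1]$ and in $h$, which yields the uniform $o(\varepsilon)$. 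To salvage your PDE route you would have to establish the uniform $\LpSpace^2$-stability of $\psi_b$ with constants depending only on $\|h\|_{\cmSpace}$, which is substantially harder than the theorem itself; switching the continuity step to the probabilistic representation is the way to complete the proof.
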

}

Our last main result provides a partial explanation to the empirically observed fast  convergence rate in the numerical method for computing $F(g)$ proposed and analysed in~\cite{Liang2023}. For $g\in\contSpace^2$ and $n \in \mathbb{N},$ let~$g_n$ denote the piecewise linear approximation of~$g$ with nodes at $(kT/n,g(kT/n))$, $k=0,1\ldots,n.$ Denote by $F_n(g_n)$ the Brownian bridge--interpolated Markov chain approximation of $F(g_n)$ that was constructed in~\cite{Liang2023}. Numerical experimentation reported in Section 5 of~\cite{Liang2023} indicated that the error $\lvert F_n(g_n) - F(g)\rvert$ behaves as $cn^{-2}$ as $n\to\infty$ for some $c>0.$ To explain this phenomenon, the following decomposition was used:
\begin{equation}\label{eqn:error_decomp}
	F_n(g_n) - F(g) = (F_n(g_n) - F(g_n)) + (F(g_n) - F(g)).
\end{equation}
For the first term on the right-hand side, it was heuristically shown in Section~3 of~\cite{Liang2023} that $n^2(F_n(g_n) - F(g_n))$ converges to a non-zero quantity. As for the second term, putting
\begin{equation*}
	h_n(t) := n^2(g_n(t) - g(t)), \quad  t \in [0,T],
\end{equation*}
one has
\begin{equation*}
	F(g_n) - F(g) = F(g  + n^{-2}h_n) - F(g).
\end{equation*}
If~$h_n$ converged to some~$h$ in $\cmSpace,$ one would expect the last difference to behave as $n^{-2}\nabla_h F(g).$ However, $h_n$ does not converge even in $\contSpace$ (cf.\ Fig.~\ref{fig:ex4}). Nevertheless, due to the integral representation~\eqref{riesz_markov_rep} for the derivative of $F(g),$ differentiation in direction $h_n$ becomes in the limit equivalent to that in direction $T^2\ddot{g}/12.$
\begin{theorem}\label{thm:dF_n}
Let conditions \textnormal{(C1)--(C4)} be satisfied, $g \in \contSpace^2(0,T)$ and $\| \ddot{g}\|_{\infty} <\infty$. Then, setting $h  (t):= \frac{T^2}{12}\ddot{g}(t),$ one has
\begin{equation*}
	\lim_{n\to \infty} \nabla_{h_n} F(g)  = \nabla_{h}F(g).
\end{equation*}
\end{theorem}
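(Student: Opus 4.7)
The plan is to reduce the claim to the convergence of explicit Lebesgue integrals via Theorem~\ref{thm:dF}. Since each piecewise-linear $h_n$ vanishes at $t_0 = 0$ (as $g_n$ interpolates $g$ at every grid point) and has a bounded, piecewise constant derivative, we have $h_n \in \cmSpace$. Hence Theorem~\ref{thm:dF} gives $\nabla_{h_n}F(g) = -\int_0^T h_n(t)\,\dot\psi(t)\,dt$, while $\nabla_h F(g) = -\tfrac{T^2}{12}\int_0^T \ddot g(t)\,\dot\psi(t)\,dt$, so the task becomes to prove that $\int_0^T h_n\,\dot\psi\,dt \to \tfrac{T^2}{12}\int_0^T \ddot g\,\dot\psi\,dt$ as $n\to\infty$.

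Next I would exploit the classical Lagrange remainder formula for piecewise-linear interpolation: on each subinterval $I_k := [kT/n,(k+1)T/n]$ there exists $\xi_t \in I_k$ with
\begin{equation*}
h_n(t) = \tfrac{n^2}{2}\,\ddot g(\xi_t)\,(t - t_k)(t_{k+1} - t),
\end{equation*}
yielding the uniform bound $\|h_n\|_\infty \leq T^2\|\ddot g\|_\infty/8$. Combined with the elementary identity $\int_{I_k}(t - t_k)(t_{k+1} - t)\,dt = (T/n)^3/6$, this is precisely what produces the constant $T^2/12$ appearing in the limiting direction $h$. Choosing any $\eta_k \in I_k$ and decomposing
\begin{equation*}
\int_{I_k} h_n\,\dot\psi\,dt = \tfrac{T^3}{12n}\,\ddot g(\eta_k)\dot\psi(\eta_k) + R_k,\qquad |R_k| \leq \tfrac{T^3}{12n}\,\omega_k,
\end{equation*}
where $\omega_k$ is the oscillation of $\ddot g\,\dot\psi$ on $I_k$, the main terms $\tfrac{T^2}{12}\sum_k \ddot g(\eta_k)\dot\psi(\eta_k)(T/n)$ form a Riemann sum for $\tfrac{T^2}{12}\int_0^T \ddot g\,\dot\psi\,dt$, and the total error is controlled by $\tfrac{T^3}{12}\max_k \omega_k$.

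The principal obstacle is that Theorem~\ref{thm:Z} guarantees continuity of $\dot\psi$ only on the open interval $(0,T)$, so $\max_k \omega_k$ need not tend to zero without further work. I would handle this with a standard $\varepsilon$-cutoff: splitting $[0,T] = [\varepsilon,T-\varepsilon] \cup \bigl([0,\varepsilon] \cup [T-\varepsilon,T]\bigr)$, the Riemann-sum argument above runs cleanly on the middle part since $\ddot g\,\dot\psi$ is uniformly continuous there. On the short end-intervals, the uniform bound on $\|h_n\|_\infty$ combined with the integrability $\dot\psi = v'(0,x_0)\,f_\tau^{\mathbf{Q}} \in \LpSpace^1[0,T]$ (which follows from Theorem~\ref{thm:Z}, as $f_\tau^{\mathbf{Q}}$ is the $\mathbf{Q}$-density of $\tau$ and therefore integrates to at most $1$) makes the contribution of order $\int_0^\varepsilon |\dot\psi|\,dt + \int_{T-\varepsilon}^T |\dot\psi|\,dt$, which is $o_\varepsilon(1)$ uniformly in $n$; the same bound controls the corresponding endpoints of the limiting integral. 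A routine $\varepsilon$-$\delta$ argument then completes the proof.
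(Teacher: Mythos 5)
Your proposal is correct, and it shares the paper's two key ingredients: the reduction via Theorem~\ref{thm:dF} to showing $\int_0^T h_n\dot\psi\,dt \to \frac{T^2}{12}\int_0^T\ddot g\,\dot\psi\,dt$, and the linear-interpolation remainder $h_n(t)=\frac{n^2}{2}\ddot g(\xi_t)(t-t_k)(t_{k+1}-t)$, which yields both the uniform bound $\|h_n\|_\infty\le T^2\|\ddot g\|_\infty/8$ and, through $\int_{I_k}(t-t_k)(t_{k+1}-t)\,dt=(T/n)^3/6$, the constant $T^2/12$. Where you differ is the concluding convergence mechanism. The paper first proves $\int_0^T h_n\phi\,dt\to\frac{T^2}{12}\int_0^T\ddot g\,\phi\,dt$ for indicators of intervals, extends by linearity to step functions, and then to every $\phi\in L^1$ by density using the uniform sup bound on $h_n$; it then only needs $\dot\psi\in L^1$ (which, as you correctly note, follows from $\dot\psi=v'(0,x_0)f_\tau^{\mathbf Q}$ with $f_\tau^{\mathbf Q}$ a subprobability density). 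You instead work directly with $\phi=\dot\psi$, running a Riemann-sum/oscillation argument on $[\varepsilon,T-\varepsilon]$, where $\dot\psi$ is uniformly continuous by Theorem~\ref{thm:Z}, and controlling the end intervals by $\|h_n\|_\infty\int|\dot\psi|$ together with absolute continuity of the integral. Both routes work; the paper's buys a weak-convergence statement valid against arbitrary $L^1$ test functions without ever invoking continuity of $\dot\psi$, while yours is more elementary and self-contained for the specific integrand at hand. Two small points to tidy: on $I_k$ your integrand is $\ddot g(\xi_t)\dot\psi(t)$ with two different arguments, so the remainder $R_k$ should be bounded via the oscillation of $(s,t)\mapsto\ddot g(s)\dot\psi(t)$ over $I_k\times I_k$ (equivalently by $\omega_k(\ddot g)\sup_{I_k}|\dot\psi|+\|\ddot g\|_\infty\,\omega_k(\dot\psi)$) rather than by the oscillation of the product $\ddot g\,\dot\psi$ alone — this costs nothing since both factors are uniformly continuous on $[\varepsilon,T-\varepsilon]$; and the subintervals straddling $\varepsilon$ and $T-\varepsilon$ contribute only $O(1/n)$, which your final $\varepsilon$--$\delta$ bookkeeping should absorb explicitly.
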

This result is illustrated by Example~\ref{exam_4} below.

\section{Auxiliary results}\label{aux_results}

To simplify our analysis, we first reduce the time-dependent boundary-crossing problem to a level-crossing problem by subtracting the boundary function from the process. For $(s,x) \in [0,T]\times \mathbb{R},$ set
\begin{equation*}\label{defn:muY}
	\muY(s,x) := \muX(s,x + g(s)) - \dot{g}(s),\quad \sigmaY(s,x) := \sigmaX(s,x + g(s)).
\end{equation*}
Since $g \in \boundarySpace,$ it follows from (C3) that the new drift and diffusion coefficients~$\muY$ and~$\sigmaY$ also satisfy conditions (C1)--(C3). Denote by $\Y^{s,x} := \{\Y_u^{s,x} : u \in [s,\T]\}$ the diffusion process specified by
\begin{equation*}
	\Y_u^{s,x} = x + \int_s^u \muY(t,\Y_t^{s,x}) \,dt +\int_s^u \sigmaY(t,\Y_t^{s,x})\,dW_t,\quad u\in[s,\T].
\end{equation*}
Clearly,
\begin{equation}\label{defn:Y_sx}
	Y_t^{s,x-g(s)} = X_t^{s,x} - g(t),\quad t\in [0,T],
\end{equation}
and
\begin{equation*}\label{defn:Y}
	\Y_t := \X_t - g(t),\quad t\in [0,\T],
\end{equation*}
coincides\ with $\Y_t^{0,y_0},$ where $y_0 := x_0 - g(0).$ For $(s,x)\in \Q:= (0,T)\times (-\infty,0)$ let
\begin{equation*}
	\tauY^{s,x} :=  \inf\{t \geq s: (t,\Y_t^{s,x})\in \partial\Q\}, \quad \tauY := \tauY^{0,y_0}.
\end{equation*}
We denote the probability of $\Y^{s,x}$ not hitting zero by time $\T$ by
\begin{equation*}\label{defn:vb}
	\bv(s,x)  := \Px(\tauY^{s,x} =\T), \quad (s,x) \in \Q.
\end{equation*}
From \eqref{defn:Y_sx}, we have
\begin{equation}\label{change_of_variable}
	\V(s,x) =  \bv(s,x-g(s)), \quad(s,x)\in \D.
\end{equation}

For $f \in \contSpace^{1,2}(\mathbb{R}^2),$ denote by $\heatX$ the differential operator associated with the process~$\X$:
\begin{align*}
	(\heatX f)(s,x) &:= (\dot{f} + \muX f'  + \mbox{$\frac{1}{2}$}\sigmaX^2f'')(s,x).
\end{align*}
The main reason for including the following lemma  is that, although the claimed existence and uniform boundedness of the derivatives are often taken for granted (or even explicitly assumed as in~\cite{Khintchine1933}), it is difficult to find a formal proof of this fact.

\begin{lemma}
\label{BKE}
The function $\V$ defined in \eqref{defn:v} belongs to $\contSpace^{1,2}(\D)\cap \contSpace(\cl(D_{T-\varepsilon}))$ for all $\varepsilon >0.$ Furthermore, it is a solution to the following boundary value problem\rm{:}
\begin{equation}\label{PDE}
\begin{cases}
	\heatX \V = 0, & \text{in } \D,\\
	\V(s, x) = \bm{1}_{s=T},  & (s,x) \in \partial \D\backslash \{s = 0\}.
\end{cases}
\end{equation}

\end{lemma}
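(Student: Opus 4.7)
The plan is to reduce the problem to a flat-boundary parabolic Cauchy--Dirichlet problem via the shift $\Y = \X - g$, construct a classical solution using standard parabolic PDE theory, identify that solution with $\bv$ by an It\^o-optional-stopping argument, and then translate back.

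First, I work with the process $\Y_t = \X_t - g(t)$, whose coefficients $\muY, \sigmaY$ inherit (C1)--(C3) in view of (C4). Via \eqref{change_of_variable}, it suffices to establish the analogous regularity and PDE for $\bv(s,x) = \Px(\tauY^{s,x} = \T)$ on $\Q$, namely $\heatY \bv = 0$ on $\Q$ with $\bv(T-, x) = 1$ for $x < 0$ and $\bv(s, 0) = 0$ for $s \in [0, T)$.

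Second, I invoke classical parabolic theory to produce a bounded classical solution $u \in \contSpace^{1,2}(\Q)$ that is continuous on $\cl(Q_{T-\varepsilon})$ for every $\varepsilon > 0$. Uniform parabolicity is provided by (C2), while the H\"older regularity of $\muY, \sigmaY$ required for Schauder-type estimates comes from (C3) together with $g \in \contSpace^2$. Since the spatial domain is unbounded, I would first solve the problem on truncated strips $(0, T) \times (-R, 0)$ with artificial Dirichlet data at $x = -R$ (applying, e.g., Chapter~3 of Friedman's \emph{Partial Differential Equations of Parabolic Type}), and then pass to the limit $R \to \infty$ using the maximum principle and interior Schauder estimates to extract a limit with the claimed regularity. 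The corner $(T, 0)$, where the top and lateral data are inconsistent, is naturally excluded by working on $Q_{T-\varepsilon}$.

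Third, I identify $u$ with $\bv$. Applying It\^o's formula to $u(t, \Y_t^{s,x})$ on $[s, (T - \varepsilon) \wedge \tauY^{s,x}]$ and using $\heatY u = 0$ together with the boundedness of $u$, the stochastic integral is a true martingale, so
\begin{equation*}
	u(s, x) = \Ex\, u\bigl((T - \varepsilon)\wedge \tauY^{s,x},\, \Y^{s,x}_{(T - \varepsilon) \wedge \tauY^{s,x}}\bigr).
\end{equation*}
Letting $\varepsilon \downarrow 0$ and invoking the boundary values of $u$ together with the regularity of every lateral boundary point $(s, 0)$ with $s < T$ (a standard consequence of non-degeneracy (C2)), bounded convergence yields $u(s, x) = \Px(\tauY^{s,x} = T) = \bv(s, x)$. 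Thus $\bv$ has the regularity of $u$, and translating back through \eqref{change_of_variable} gives the conclusion for $\V$.

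The main obstacle is the second step: locating, or constructing via truncation and compactness, an existence theorem tailored to the unbounded half-strip $\Q$ that delivers precisely $\contSpace^{1,2}$ interior regularity together with continuity up to $\partial \Q \setminus \{(T, 0)\}$. Once that PDE input is in hand, the probabilistic identification in the third step is routine.
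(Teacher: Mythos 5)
Your overall strategy---reduce to the flat-boundary problem for $\bv$ on the half-strip $\Q$ via \eqref{change_of_variable}, build a classical solution by parabolic PDE methods, and identify it with $\bv$ through It\^o's formula and optional stopping---is coherent, and your step 3 is sound (it parallels the probabilistic identification the paper also performs). But the substance of the lemma is exactly your step 2, which you flag as ``the main obstacle'' and do not supply, and as proposed it has concrete holes. First, Friedman's existence theorems for the first boundary value problem (the Chapter 3 results you cite) require continuous boundary data satisfying compatibility conditions at the corners; here the terminal datum $\bm{1}_{x<0}$ is discontinuous at the corner $(T,0)$, and this discontinuity persists in every truncated cylinder $(0,T)\times(-R,0)$, so the truncation scheme cannot even start without an additional approximation-of-data plus barrier argument. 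Second, your identification step needs the constructed $u$ to attain the terminal value $1$ continuously on $\{T\}\times(-\infty,0)$ (to handle the event $\{\tauY^{s,x}=T\}$ when letting $\varepsilon\downarrow 0$); note that the lemma itself only claims continuity on $\cl(D_{T-\varepsilon})$, so this terminal continuity is an extra property that must be extracted from the construction, with barriers uniform in $R$---interior Schauder estimates alone only give the equation in the open strip and lose all boundary information in the limit $R\to\infty$.

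The paper closes precisely this gap by a different device: it invokes the Green's function theory of Garroni--Menaldi for the half-strip (Theorem 1.10, Chapter VI), which provides a Green's function $G$ with $\contSpace^{1,2}$ regularity and Gaussian bounds on its derivatives. An It\^o plus monotone-class argument applied to an auxiliary problem with compactly supported source identifies $G(T-s,x;T-t,y)$ with the taboo transition density of $\Y$, giving the representation $\bv(s,x)=\int_{-\infty}^0 G(T-s,x;0,y)\,dy$; the claimed smoothness then follows by differentiating under the integral using the Gaussian bounds, and $\heatX \V=0$ is verified afterwards by the Doob-martingale/It\^o argument. In particular, the discontinuous terminal datum enters only through this integral representation, so the paper never needs continuity of the solution up to $s=T$ nor any corner analysis---which is exactly the delicate input your route would still have to establish by hand. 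If you want to pursue your construction, you should either import the Garroni--Menaldi bounds anyway (at which point the paper's shorter route is available) or carry out the data-approximation and uniform barrier arguments explicitly.
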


\begin{remark}\label{remark:history_PDE}
In the case of two continuously differentiable boundaries  $g_\pm,$ such that $x_0\in (g_- (0), g_+(0)),$ and  a function  $f \in C^2([g_-(T),g_+(T)] )$ with  $f(g_{\pm}(T)) = f'(g_{\pm}(T))= f''(g_{\pm}(T)) =0,$ Theorem~3 on p.~428 in~\cite{GikhmannSkorokhod1964} states that the function
\begin{equation*}
v_f(s,x) := \Ex\, f(X_T^{s,x})\mathbf{1}_{g_-(t) < X_t^{s,x} < g_+(t),
 \ t \in [s,T]}
\end{equation*}
satisfies the Kolmogorov backward equation inside the strip between the boundaries.


However, in the case of the boundary crossing probability, the terminal function $f(x) = \mathbf{1}_{x < g(\T)}$ has a discontinuity at the boundary. Theorem~2 on p.~173 in~\cite{GikhmannSkorokhod1972} implies that our~$v$ from \eqref{defn:v} is a weak solution to the Kolmogorov backward equation with Dirichlet boundary conditions, but it was not proved that~$v$ is a classical solution. It was shown in Theorem~3 of~\cite{Lai1973} that, for a parabolic function~$f$ (i.e.\ a function which satisfies a space-time mean value property), one has $f\in \contSpace^{1,2}(\D)$ and $\heatX f=0$ in~$\D.$ By the strong Markov property of~$X,$ it is immediate that~$\V$ is parabolic and hence the claim of our Lemma~\ref{BKE} follows. However, we present here an alternative proof of that statement using the Green function of \eqref{PDE}, which allows us to bound the spatial derivatives of~$v$ at the boundary. In the case of when the coefficients of~$\heatX$ are bounded, smooth and time-homogeneous, it is know that, for any  $t\in(0,\T),$ the function $v(t,\,\cdot\,) $ is  infinitely differentiable, and all its derivatives are continuous and bounded up to $g(t)$ (see e.g.\ \cite{Cattiaux1991,Gobet2004}). Note also that the authors in~\cite{Delarue2013} prove that, in the case of a unit-diffusion coefficient and time-homogeneous drift, the taboo transition density satisfies the Kolmogorov forward equation with Dirichlet boundary conditions using analytic properties of the Green's function.
\end{remark}

\begin{proof}[Proof of Lemma \ref{BKE}]
Using standard results on Green's functions, we will first show that $\bv\in \contSpace^{1,2}(\Q).$   Due to relation \eqref{change_of_variable} and the differentiability of $g$, this will imply that $\V\in \contSpace^{1,2}(\D).$

For $f \in \contSpace^{1,2}( \mathbb{R}^2),$ define
\begin{equation}\label{defn:A}
	(\A f)(r,x):= \dot{f}(r,x) - \muY(\T-r,x)f'(r,x) - \mbox{$\frac{1}{2}$}\sigmaY^2(\T - r, x)f''(r,x).
\end{equation}
For a non-negative compactly supported $\phi \in \contSpace(\Q),$ consider the following initial-boundary-value problem on $\Q$:
\begin{equation}\label{PDE_rect}
	\begin{cases}
		(\A w -\phi)(r,x) = 0 , & (r,x) \in \Q, \\
		w(r,x) = 0,  & (r,x) \in \partial \Q \backslash\{r = \T\}.
	\end{cases}
\end{equation}
Since the diffusion coefficient is bounded away from zero due to condition (C2) and   the coefficients of~$\A$ are differentiable due to~(C3), Theorem 1.10 in Chapter~VI in~\cite{GarroniMenaldi1992} states that there exists a unique Green's function
\[
G(\,\cdot\,,\cdot\,; t,y) \in \contSpace^{1,2}((t, T)\times (-\infty,0)) \cap \contSpace((t,\T]\times (-\infty ,0]),\quad (t,y) \in D_t \cup (\{0\}\times  (-\infty,0)),
\]
for problem \eqref{PDE_rect}. Hence the unique solution to \eqref{PDE_rect} is given by the domain potential (see (2.16) in Chapter~IV in~\cite{GarroniMenaldi1992}) of the form
\begin{equation}\label{defn:domain_potential}
	w(r,x) = \int_0^r \int_{-\infty}^0\phi(t,y) G(r,x;t,y)\,dy\,dt, \quad (r,x) \in \Q,
	\end{equation}
which is bounded and continuous on $\cl(\Q)$ and belongs to $\contSpace^{1,2}(\Q).$

For $f\in \contSpace^{1,2}(\mathbb{R}^2),$ setting
\begin{equation*}
	(\heatY f)(s,x):= (\dot{f} + \muY f'  + \mbox{$\frac{1}{2}$}\sigmaY^2f'')(s,x),
\end{equation*}
it follows from \eqref{PDE_rect} that the function $u(s,x) := w(\T-s,x)$ satisfies the equation
\begin{equation}\label{eqn:Lu=-phi}
	(\heatY u)(s,x) = -(\A w)(T-s,x) =  -\phi(T-s,x),\quad (s,x) \in \Q,
\end{equation}
and from \eqref{defn:domain_potential} that admits representation
\begin{equation}\label{PDE_rep}
	u(s,x) = \int_s^{T}\int_{-\infty}^0 \phi(T - l, y)G(T-s, x; T - l, y)\,dy\,dl,
\end{equation}
for $(s,x) \in \Q.$

Now we proceed to show that the second factor in the integrand \eqref{PDE_rep} is the taboo transition density of the process~$\Y$: for $(s,x)\in \Q,$ $(t,y) \in (0,\T]\times (-\infty,0) ,$ $s<t,$
\begin{equation}\label{lemma:G_q}
G(T-s,x;T-t,y) = \qY(s,x;t,y):= \mbox{$\frac{\partial}{\partial y}$}\Px(\Y_t^{s,x} \leq y; \tauY^{s,x} > t).
\end{equation}
As the Green's function possesses the desired differentiability properties, it will eventually follow that $\bv \in \contSpace^{1,2}(\Q)$ and hence that  $\V \in \contSpace^{1,2}(\D).$

Since $u \in \contSpace^{1,2}(\Q),$ we can apply It\^o's formula to obtain, for $(s,x) \in \Q,$
\begin{equation*}
	u( \tauY^{s,x}, \Y_{ \tauY^{s,x}}^{s,x})
	 = u(s,x) + \int_s^{\tauY^{s,x}} (\heatY u)(t,\Y_t^{s,x})\,dt + \int_s^{ \tauY^{s,x}}( u' \, \sigmaY)(t,\Y_t^{s,x})\,dW_t.
\end{equation*}
Using \eqref{eqn:Lu=-phi} and $u(\tauY^{s,x}, \Y_{\tauY^{s,x}}^{s,x}) = 0,$ it follows that
\begin{equation*}
	u(s,x) = \int_s^{\tauY^{s,x}}\phi(T-t, \Y_t^{s,x})\,dt  - \int_s^{\tauY^{s,x}}( u' \, \sigmaY)(t,\Y_t^{s,x})\,dW_t.
\end{equation*}
As $u$ and $\phi$ are bounded, it follows that the It\^o integral on the right-hand side is a bounded zero-mean martingale. Hence we can take the expectations of both sides to get
\begin{equation*}
	u(s,x) = \Ex \int_s^{T} \phi(T-t, \Y_t^{s,x})\bm{1}_{\tauY^{s,x} \geq t}\,dt .
\end{equation*}
Since $\phi \geq 0$ is bounded, by the Fubini--Tonelli theorem we can interchange the order of integrals to get
\begin{equation}\label{prob_rep}
	u(s,x) = \int_s^{T}\int_{-\infty}^{0} \phi(T-t, y) \Px(\Y_t^{s,x} \in dy ,\tauY^{s,x} > t)\,dt .
\end{equation}
Equating \eqref{PDE_rep} and \eqref{prob_rep}, we  use the functional monotone class theorem (see e.g.\ Theorem~4 in Chapter~1, \S~5 in~\cite{Krylov1995}) to obtain that the following relation holds for any Borel set $A \in \mathcal{B}((-\infty,0]) $ and $(s,x)\in \Q,$ $(t,y) \in (0,\T]\times (-\infty,0) ,$ $s<t$:
\begin{equation*}
	\int_A G(T-s,x;T-t,y)\,dy = \Px(\Y_t^{s,x} \in A, \tauY^{s,x} > t).
\end{equation*}
This implies that the measure $\Px(\Y_t^{s,x} \in \boldsymbol{\cdot} \, , \tauY^{s,x}>t)$ admits the density given by the left-hand side of \eqref{lemma:G_q}. Hence, for $(s,x) \in \Q,$
\begin{align}
\label{PDE_rep_U}
	\bv(s,x) = \Px(Y_T^{s,x} \leq 0,\tauY^{s,x}=T) = \int_{-\infty}^{0}G(T-s, x; 0, y)\,dy.
\end{align}
That $\bv\in \contSpace^{1,2}(\Q)$  follows now from differentiability of $G$ and the ``Gaussian bounds'' for the derivatives of $G(r,x;t,y)$ with respect to~$r$ and~$x$ from Theorem~1.10 in Chapter~VI in~\cite{GarroniMenaldi1992}. Indeed, using these bounds, one verifies that, when differentiating the right-hand side of~\eqref{PDE_rep_U}, one can  interchange the order of differentiation and integration and conclude the claimed  properties from the respective order continuous differentiability of~$G.$
Using the chain-rule and \eqref{change_of_variable}, it follows that $\V \in \contSpace^{1,2}(\D)$ as well.

Once the desired differentiability of $\V$ has been established, relation \eqref{PDE} is proved using a standard argument based on It\^o's formula: for $t\in[0,\T],$
\begin{equation}
\label{v_MG}
	\V(t \wedge \tauX, \X_{t\wedge \tauX}) = \V(0,X_0)  + \int_0^{t\wedge \tauX} (\heatX \V)(s,X_s)\,ds + \int_0^{t\wedge \tauX} (  \V' \, \sigmaX)(s, \X_s)\,dW_s.
\end{equation}
Since $\V$ is bounded and $\X$ is a strong Markov process, $\V(t\wedge \tauX, \X_{t\wedge \tauX}) =\Ex [\mathbf{1}_{\tauX =\T}\,|\,\mathcal{F}_{t\wedge \tauX}]  $ is a bounded Doob martingale. Since the It\^o integral on the right-hand side of~\eqref{v_MG} is a local martingale, it follows that $\int_0^{t \wedge \tauX} (\heatX \V)(s, \X_s)\,ds$ is a local martingale as well. As the latter process is also continuous and has zero mean and finite variation, it must be equal to zero (see e.g.\ p.~120 in~\cite{RevuzYor}), so that $\heatX \V = 0$ in~$\D.$ Lemma~\ref{BKE} is proved.
\end{proof}

In the next lemma we establish important differentiability properties of $\V$ that we will need in what follows.

\begin{lemma}\label{lemma:boundary_grad}
For all $s\in (0,\T),$ the limits
\begin{equation*}
	\V'(s, g(s)) := \lim_{x\uparrow g(s)}\V'(s,x),\quad \V''(s, g(s)) := \lim_{x\uparrow g(s)}\V''(s,x)
\end{equation*}
exist and are finite. Moreover, $\V'(s,g(s))$ and $\V''(s,g(s))$ are continuous functions of $s \in (0,\T).$ Furthermore, for any $\varepsilon \in (0,T),$
\begin{equation*}
	\sup_{(s,x) \in D_{\T-\varepsilon}} \lvert v'(s,x)\rvert <\infty,\quad \sup_{(s,x) \in D_{\T-\varepsilon}} \lvert v''(s,x)\rvert <\infty.
\end{equation*}
\end{lemma}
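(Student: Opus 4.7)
The plan is to reduce to the case of the flat boundary $\{x=0\}$ by working with $\bv$ on~$\Q$. Since $g\in\boundarySpace$, the relation $\V(s,x)=\bv(s,x-g(s))$ and the chain rule transfer all three claims of the lemma from the corresponding statements for~$\bv$ on~$\Q$ (up to $x=0$), so it suffices to prove them for~$\bv$. From the proof of Lemma~\ref{BKE} we already have the domain-potential representation
\begin{equation*}
\bv(s,x) = \int_{-\infty}^{0} G(T-s,x;0,y)\,dy,\quad (s,x)\in\Q,
\end{equation*}
where $G$ is the Dirichlet Green's function of the operator~$\A$ from~\eqref{defn:A} on the half-strip. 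Theorem~1.10 in Chapter~VI of~\cite{GarroniMenaldi1992} provides Gaussian estimates of the form
\begin{equation*}
|\partial_x^k G(r,x;t,y)| \le C_k(r-t)^{-(1+k)/2}\exp\bigl(-c(x-y)^2/(r-t)\bigr),\quad k=0,1,2,
\end{equation*}
for $r>t$, together with continuity of $\partial_x^k G(r,\,\cdot\,;t,y)$ up to the lateral boundary $\{x=0\}$.

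First I would use these bounds to justify differentiating under the integral sign, obtaining the representations
\begin{equation*}
\bv^{(k)}(s,x) = \int_{-\infty}^{0}\partial_x^k G(T-s,x;0,y)\,dy,\quad k=1,2.
\end{equation*}
For any $\varepsilon>0$, integrating the Gaussian majorant in $y\in(-\infty,0)$ yields bounds of order $\varepsilon^{-1/2}$ (for $k=1$) and $\varepsilon^{-1}$ (for $k=2$) that hold uniformly in $(s,x)\in\cl(\Q)$ with $s\le T-\varepsilon$. After translating back via $\V(s,x)=\bv(s,x-g(s))$ and using boundedness of $\dot{g},\ddot{g}$ on $[0,T]$, this gives the uniform bounds on $D_{T-\varepsilon}$ claimed in the lemma.

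Next, to establish the boundary limits, I would apply dominated convergence as $x\uparrow 0$, using the Gaussian majorant (which is locally $x$-uniform near $0$) and the boundary continuity of $\partial_x^k G$, concluding that
\begin{equation*}
\lim_{x\uparrow 0}\bv^{(k)}(s,x) = \int_{-\infty}^{0}\partial_x^k G(T-s,0;0,y)\,dy, \quad k=1,2,
\end{equation*}
with finite right-hand sides. Continuity of these limits in $s\in(0,T)$ follows from a further dominated-convergence argument in the $s$ variable, based on joint continuity of $\partial_x^k G(r,0;0,y)$ in $r\in(0,T)$ and the same Gaussian majorant. Translating back to~$\V$ via $g\in\boundarySpace$ then yields the lemma.

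The main subtlety is making precise the required continuity and uniform integrability of the Gaussian bounds from~\cite{GarroniMenaldi1992} all the way up to $x=0$; once one has confirmed that these bounds produce a locally $x$-uniform majorant that is integrable in $y\in(-\infty,0)$ and that $\partial_x^k G(r,\,\cdot\,;0,y)$ is jointly continuous in $(r,x)$ on $(0,T]\times(-\infty,0]$, the three conclusions reduce to standard dominated-convergence arguments.
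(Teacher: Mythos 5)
Your proposal is correct and follows essentially the same route as the paper: reduce to the flat-boundary function $\bv$ via $\V(s,x)=\bv(s,x-g(s))$, use the Green's function representation $\bv(s,x)=\int_{-\infty}^0 G(T-s,x;0,y)\,dy$ from the proof of Lemma~\ref{BKE}, and invoke the Gaussian bounds of Theorem~1.10 in Chapter~VI of~\cite{GarroniMenaldi1992} to differentiate under the integral, obtain the uniform bounds on $D_{T-\varepsilon}$, and pass to the limit $x\uparrow g(s)$. The only cosmetic difference is in the continuity in $s$ of the boundary values: the paper derives it quantitatively from the H\"older-in-time estimate~\eqref{grad_bd_est} supplied by the same theorem, whereas you argue by dominated convergence from joint continuity of $\partial_x^k G$, which rests on the same estimates.
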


\begin{proof}
Let $v^{(1)}(s,x):= v'(s,x)$ and $v^{(2)}(s,x):= v''(s,x).$ Differentiating the right-hand side of \eqref{PDE_rep_U} and using the ``Gaussian bounds'' from Theorem~1.10 in Chapter~VI in~\cite{GarroniMenaldi1992} to interchange the order of differentiation and integration, we establish the existence of the limits
\begin{equation}
\label{bd_der_1}
	\bv^{(\ell)}(s,0) := \lim_{x\uparrow 0}\bv^{(\ell)}(s,x) = \int_{-\infty}^{0} \lim_{x\uparrow 0}\partial_x^{\ell}G(T-s,x;0,y)\,dy,\quad  \ell \in \{1,2\}.
\end{equation}
Further, the same theorem from \cite{GarroniMenaldi1992} asserts that there exist constants $c_0$ and $c_1$ that only depend on $\muY,$ $\sigmaY,$ and $\T,$ such that, for $t < s' < s $ and $(s,x),(t,y) \in \Q,$ $r:=T-s,$ $r':=T-s',$ one has
\begin{equation}\label{grad_bd_est}
	\lvert \partial_x^{\ell} G(r,x;t,y) - \partial_x^{\ell}G(r',x;t,y)  \rvert
 \leq
 c_0 \frac{(r-r')^{(\alpha + 2-\ell)/2}}{(r' - t)^{(3+\alpha)/2}}\exp\left\{ -c_1\frac{(x -y)^2}{r - t} \right\}.
\end{equation}
Using \eqref{bd_der_1} and \eqref{grad_bd_est}, we obtain
\begin{align*}
	\lvert \bv^{(\ell)}(s,0) - \bv^{(\ell)}(s',0) \rvert&\leq  \int_{-\infty}^{0}\lvert \partial_x^{\ell}G(T-s',0;0,y)- \partial_x^{\ell}G(T-s,0;0,y) \rvert \,dy\\
	&\leq c_0 \frac{(s-s')^{(\alpha + 2 -\ell)/2}}{(T-s')^{(3+\alpha)/2}} \int_{-\infty}^0\exp\left\{\frac{-c_1 y^2}{T-s'} \right\}\,dy,
\end{align*}
which converges to zero as  $|s'-s| \to 0,$  establishing the continuity of $\bv(s,0),$ $s\in (0,\T).$

The boundedness of $\sup_{(s,x) \in D_{T-\varepsilon}} \bv^{(\ell)}(s,x),$ $\ell \in \{1,2\},$ is ensured by the same ``Gaussian bounds'' theorem in~\cite{GarroniMenaldi1992}. The assertions  for~$\V$  stated in the lemma    immediately   follow now from the relation~\eqref{change_of_variable} between~$\bv$ and~$\V$. Lemma \ref{lemma:boundary_grad} is proved.
\end{proof}

Denote the unit-diffusion transform of $\Y$ by
\begin{equation}\label{defn:Psi}
	\Psi(t,y) := \int_0^y\frac{dx}{\sigmaY(t,x)}, \quad (t,y) \in [0,\T]\times (-\infty,0],
\end{equation}
and the inverse of $z=\Psi(t,y)$ with respect to $y$ by $\Psi^{-1}(t,z).$ Under assumptions (C2) and (C3), $\Psi$ belongs to $ \contSpace^{1,2}([0,T]
 \times \mathbb{R}),$ and hence it follows from It\^o's formula that $Z^{s,z}:=\big\{Z_t^{s,z}:=\Psi\big(t,\Y_t^{s,\Psi^{-1}(s,z)}\big):t\in [s,\T]\big\}$ is a diffusion process with a unit-diffusion coefficient (see e.g.\ p.~34 in~\cite{GikhmannSkorokhod1972}), i.e.
\begin{equation*}
	Z_u^{s,z} = z + \int_s^u\muZ(t, Z_t^{s,z})\,dt + W_u - W_s,\quad u\in [s,\T],
\end{equation*}
with the  drift coefficient given by
\begin{equation}\label{defn:eta}
	\muZ(t,z) := (\dot{\Psi} + \muY/\sigmaY - \mbox{$\frac{1}{2}$}\sigmaY')(t, \Psi^{-1}(t,z)).
\end{equation}
Furthermore, for $(t,z) \in [0,\T]\times \mathbb{R},$ set $\Eta(t,z) := \int_0^z\muZ(t,x)\,dx,$ and consider the continuous functional~$G_t$ on $\contSpace([0,T-t])$ defined by
\begin{equation}\label{defn:G}
		G_t(w) := \exp\bigg\{ \Eta(T,w_{T-t}) -\Eta(t,w_0)  - \frac{1}{2}\int_t^T(2\dot{\Eta} + \muZ^2+ \muZ')(u,w_{u-t})\,du \bigg\}
\end{equation}
for continuous $w := \{w_s:s\in[0,\T-t]\}.$

\begin{lemma}\label{prop:VL_BD}
Let conditions \textnormal{(C1)--(C4)} be satisfied. Then
\begin{equation}\label{v_bd_deriv_meander}
v'(t,g(t)) = -\sqrt{\frac{2}{\pi(\T-t)}} \cdot\frac{\Ex \,G_t(-W^{\oplus,\T-t})}{\sigmaX(t,g(t)) }, \quad t\in (0,\T),
\end{equation}
where $W^{\oplus,u} := \{W_s^{\oplus,u}: s\in [0,u]\}$ is the Brownian meander of length $u$ \textnormal{(}the standard Brownian motion conditioned to stay positive in the time interval $(0,u]$\textnormal{)}.
\end{lemma}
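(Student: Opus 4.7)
The plan is to reduce the problem to computing a boundary derivative of the non-hitting probability of a unit-diffusion process through a flat boundary, and then extract this derivative via the Brownian meander. First, by \eqref{change_of_variable} one has $v'(t,g(t)) = \bv'(t,0)$, so attention shifts to the level-crossing problem for $\Y = \X - g$. Next, applying the Lamperti-type transform $\Psi(t,\cdot)$ from \eqref{defn:Psi}, which is strictly increasing in $y$, fixes the level $0$, and turns $\Y$ into the unit-diffusion process $Z$ with drift $\muZ$, I would obtain $\bv(s,y) = \tilde v(s,\Psi(s,y))$, where $\tilde v(s,z) := \Px(\tauY^{s,\Psi^{-1}(s,z)} = \T)$ (equivalently, the non-hitting probability of level $0$ by $Z^{s,z}$). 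Since $\Psi'(s,y) = 1/\sigmaY(s,y)$, differentiation at $y=0$ gives $\bv'(t,0) = \tilde v'(t,0)/\sigmaX(t,g(t))$, so it is enough to show
\begin{equation*}
	\tilde v'(t,0) = -\sqrt{\tfrac{2}{\pi(\T-t)}}\,\Ex\, G_t(-W^{\oplus,\T-t}).
\end{equation*}

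Next I would apply Girsanov's theorem to write $\tilde v(t,z)$ as an expectation over paths of a Brownian motion $\hat W$ starting at $z$ at time $t$, with Radon--Nikodym density $\exp\bigl\{\int_t^\T \muZ(u,\hat W_u)\, d\hat W_u - \tfrac12\int_t^\T \muZ^2(u,\hat W_u)\,du\bigr\}$. Applying It\^o's formula to $\Eta(u,\hat W_u)$ turns the stochastic integral into $\Eta(\T,\hat W_\T) - \Eta(t,z) - \int_t^\T (\dot\Eta + \tfrac12\muZ')(u,\hat W_u)\,du$, and substitution reconstructs exactly the functional $G_t$ of \eqref{defn:G}. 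Re-parametrising $\hat W_u = z + W_{u-t}$ for a standard Brownian motion $W$ on $[0,\T-t]$ starting at $0$ yields
\begin{equation*}
	\tilde v(t,z) = \Ex\bigl[G_t(z + W)\,\bm{1}_{\sup_{s\in[0,\T-t]}(z+W_s) < 0}\bigr].
\end{equation*}

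For the final step I would put $z = -w$, $w>0$, and switch to $\widetilde W := -W$, rewriting the expectation as $\Ex\bigl[G_t(-(w+\widetilde W))\,\bm{1}_{\inf_{s\in[0,\T-t]}(w+\widetilde W_s) > 0}\bigr]$. The reflection principle gives $\Px(\inf(w+\widetilde W) > 0) = 2\Phi(w/\sqrt{\T-t}) - 1 \sim \sqrt{2/(\pi(\T-t))}\,w$ as $w\downarrow 0$, while the classical Iglehart--Durrett limit theorem asserts that, conditionally on this survival event, the process $(w+\widetilde W)$ converges weakly in $C[0,\T-t]$ to the Brownian meander $W^{\oplus,\T-t}$. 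Dividing by $w$, passing to the limit, and using that $\tilde v(t,0) = 0$ together with the continuity of $\tilde v'$ up to the boundary (Lemma~\ref{lemma:boundary_grad} applied to $\tilde v$, whose hypotheses are inherited from those for $\bv$) would yield $\tilde v'(t,0)$ in the claimed form, and combining with the reductions delivers \eqref{v_bd_deriv_meander}.

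The hardest part will be justifying passage to the meander limit inside the expectation: $\muZ$ and $\Eta$ need not be globally bounded, so $G_t$ may fail to be a bounded continuous functional in the sup-norm on all of $C[0,\T-t]$. I would handle this by a spatial truncation combined with uniform integrability bounds drawn from the Gaussian estimates on the Green's function employed in Lemma~\ref{BKE}, noting that under the conditioning and the indicator $\bm{1}_{-(w+\widetilde W_s)<0}$ the relevant path values remain in $(-\infty,0]$. A secondary technicality is verifying Novikov's (or Bene\v{s}') condition for the Girsanov step under (C2)--(C3), which can be ensured by a standard localisation argument since $\muZ$ is continuous.
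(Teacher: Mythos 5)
Your proposal is correct and follows essentially the same route as the paper's proof: reduction to the unit-diffusion process via $\Psi$, a Girsanov change of measure whose density is rewritten by It\^o's formula into the functional $G_t$, the reflection-principle asymptotics $\sim\sqrt{2/(\pi(T-t))}\,w$ for the survival probability, and the Durrett--Iglehart--Miller meander limit combined with uniform integrability (which the paper settles by invoking the argument of Borovkov and Downes, and the Girsanov step via Liptser--Shiryaev rather than Novikov/Bene\v{s}). The only cosmetic difference is that you extract the factor $1/\sigmaX(t,g(t))$ from the chain rule $\bv'(t,0)=\tilde v'(t,0)/\sigmaY(t,0)$, whereas the paper obtains it from the expansion $-\Psi(t,-\delta)=\delta/\sigmaX(t,g(t))+o(\delta)$ while differentiating in the original spatial variable.
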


\begin{remark}
In the time-homogeneous case, it follows from Lemma~\ref{prop:VL_BD} that representation~\eqref{riesz_markov_rep} for the  G\^ateaux derivative $\nabla_hF$ coincides with the representation obtained in Theorem 5 of \cite{Borovkov2010}.
\end{remark}


\begin{proof}[Proof of Lemma~\ref{prop:VL_BD}]
Using the boundary condition $v(t,g(t))=0$ and the fact that $v'(t,\,\cdot\,)$ has a finite limit at the boundary $g(t)$ due to Lemma~\ref{lemma:boundary_grad}, it follows that
\begin{equation}\label{eqn:bd_pre_limit}
	\lim_{x\uparrow g(t)}v'(t,x) = -\lim_{\delta \downarrow 0}\frac{1}{\delta}\int_{g(t)-\delta}^{g(t)}v'(t,z)\,dz = -\lim_{\delta \downarrow 0} \delta^{-1}v(t,g(t)-\delta).
\end{equation}
We will obtain \eqref{v_bd_deriv_meander} by computing the limit on the right-hand side of \eqref{eqn:bd_pre_limit}, using a change of measure and the well-known explicit expression for the constant boundary non-crossing probability of the standard Brownian motion.

Recall our notation \eqref{defn:Psi} and, for $\delta >0,$ set
\begin{equation*}
	\Wz_u:= Z_{t+u}^{t,\Psi(t,-\delta)} - \Psi(t,-\delta) ,\quad u \in[0,\T-t].
\end{equation*}
Let $\muZ_{t,\delta}(s,x) := \muZ(t+s, x + \Psi(t,-\delta)),$ $s\in[0,\T-t],$ and note that
\begin{equation*}
	\Wz_u = \int_0^u \muZ_{t,\delta}(s, \Wz_s)\,ds + W_{t+u} - W_t,\quad u \in [0,T-t].
\end{equation*}
Introducing for $t \in (0,T) $ the event
\begin{equation*}
	E_t^{(\delta)} := \{\widetilde{W}_u < -\Psi(t,-\delta), u \in [0,T-t]\},
\end{equation*}
one has
\begin{align}\label{pre_measure_change}
    v(t,g(t)-\delta) &= \Px(\X_u^{t,g(t)-\delta} < g(u),u \in [t,T])= \Px(\Y_u^{t,-\delta} < 0, u \in [t,T])\nonumber\\
    &= \Px(Z_u^{t,\Psi(t,-\delta)} < 0, u \in [t,T])= \Px(E_t^{(\delta)}).
\end{align}
Let
\begin{equation}\label{defn:Lambda}
    \Lambda_{t}^{(\delta)}  :=\exp\left\{\int_0^{T-t} \muZ_{t,\delta}(u, \Wz_u)\,d\Wz_u - \frac{1}{2}\int_0^{T-t} \muZ_{t,\delta}^2(u, \Wz_u)\,du \right\}.
\end{equation}
Since $\muZ_{t,\delta}$ is locally bounded, assumptions~(I) and~(II) of Theorem~7.18 and conditions of Theorem~7.19 in~\cite{Lipster2001} are satisfied, which implies that  the probability measure~$\Pz$ (with the corresponding expectation~$\Ez$) with the Radon--Nikodym derivative $d\widetilde{\Px}/d\Px := 1/\Lambda_{t}^{(\delta)}$ is such that~$\Wz$ is a~$\Pz$-Brownian motion. It follows from \eqref{pre_measure_change} and \eqref{defn:Lambda} that
\begin{equation}\label{bd_deriv_pre_limit}
	v(t,g(t) - \delta) = \widetilde{\Ex }\,\Lambda_{t}^{(\delta)}\bm{1}_{E_{t}^{(\delta)}}
    =\Ez[\Lambda_{t}^{(\delta)}\,|\, E_{t}^{(\delta)}]\Pz(E_{t}^{(\delta)}),\quad t\in(0,T).
\end{equation}
Since
\begin{equation*}
	-\Psi(t,-\delta) \equiv -\int_0^{-\delta}\frac{dy}{\sigmaY(t,y)}= \frac{\delta}{\sigmaY(t,0)} +o(\delta) =\frac{\delta}{\sigmaX(t,g(t))} + o(\delta), \quad \delta \downarrow 0,
\end{equation*}
using the well-known expression for the level-crossing probability of the Brownian motion (see e.g.\ item~1.1.4 on p.~153 in~\cite{Borodin2002}), we obtain
\begin{align}\label{bcp:bm}
	\lim_{\delta \downarrow 0}\frac{\Pz(E_{t}^{(\delta)})}{\delta} &= \lim_{\delta \downarrow 0}\frac{1}{\delta}\left(1-2\Phi\left(\frac{\Psi(t,-\delta)}{\sqrt{\T-t}}\right)\right) = \lim_{\delta\downarrow 0}\left(\frac{1}{\delta}\cdot 2\frac{-\Psi(t,-\delta)}{\sqrt{T-t}}\cdot \frac{1 + o(\delta)}{\sqrt{2\pi}} \right)\nonumber \\
	&=\sqrt{\frac{2}{\pi(\T-t)\sigmaX^2(t,g(t))}}.
\end{align}

It is not hard to verify that $\Eta(t,z) = \int_{0}^z\muZ(t,x)\,dx \in \contSpace^{1,2}((0,T)\times \mathbb{R}).$ Using It\^o's formula to transform the It\^o integral in \eqref{defn:Lambda} into a Brownian functional, we get
\begin{align*}
	\int_0^{\T-t} \muZ_{t,\delta}(u, \Wz_u)\,d\Wz_u	&=\int_0^{\T-t} \muZ(t+u,\Wz_u +\Psi(t,-\delta))\,d\Wz_u \\
	&= \Eta(\T,\Wz_{\T-t}+\Psi(t,-\delta)) - \Eta(t,\Psi(t,-\delta)) \\
	&\qquad -\int_0^{\T-t} (\dot{\Eta} + \mbox{$\frac{1}{2}$}\Eta'')(t+u,\Wz_u +\Psi(t,-\delta)) \,du,
\end{align*}
and hence, as $\Eta'' = \muZ',$ one has
\begin{align*}
     \Lambda_{t}^{(\delta)} &=  \exp\biggl\{ \Eta(\T, \Wz_{T-t} + \Psi(t,-\delta) )-\Eta(t, \Psi(t,-\delta)) \nonumber\\
    &\qquad\qquad -\frac{1}{2}\int_0^{T-t} (2\dot{\Eta} + \muZ^2 + \muZ')(t+u,\Wz_u + \Psi(t,-\delta))\,du \biggr\}\\
    &= G_t(\Wz + \Psi(t,-\delta)),
\end{align*}
where $G_t$ was defined in \eqref{defn:G}. By Theorem 2.1 in~\cite{Durrett1977}, as $\delta \downarrow 0,$ the conditional distributions (under $\Pz$) of the processes $-\Wz \in C([0,\T-t])$ given $E_{t}^{(\delta)}$ converge weakly to the law of the Brownian meander $W^{\oplus,(T-t)}$ of length~$\T-t$. Using the argument on p.~123--124 in~\cite{Borovkov2010}, it follows that the family of random variables $\{\xi_t^{(\delta)} \stackrel{d}{=} (G_t(\Wz + \Psi(t,-\delta))\,|\,E_t^{(\delta)}) : \delta \in (0,1)\}$ is uniformly integrable and hence, as the functional $G_t$ is continuous, we obtain
\begin{equation}\label{lambda_brownian_functional}
	\lim_{\delta \downarrow 0}\Ex \,\xi_t^{(\delta)} = \lim_{\delta \downarrow 0}\Ez [G_t(\Wz + \Psi(t,-\delta))\,|\, E_{t}^{(\delta)}] = \Ez \,G_t(-W^{\oplus,(T-t)}).
\end{equation}
Dividing both sides of \eqref{bd_deriv_pre_limit} by $\delta$ and passing to the limit as $\delta \downarrow 0$ using \eqref{bcp:bm} and \eqref{lambda_brownian_functional}, we obtain \eqref{v_bd_deriv_meander}. Lemma~\ref{prop:VL_BD} is proved.
\end{proof}

\begin{lemma}\label{lemma:existence_taboo}
For $0\leq s<t\leq\T,$ $x < g(s),$ $y<g(t),$ there exists the  taboo transition density
\begin{equation*}
	\q(s,x;t,y) := \Px(X_t^{s,x} \in dy, \tauX^{s,x} > t)/dy
\end{equation*}
for the process~$\X$. Furthermore, for all $(s,x),$ the function $q(s,x;t,y)$ is continuously differentiable with respect to $y$ and there exist finite limits $\lim_{y\uparrow g(t)}\partial_yq(s,x;t,y).$
\end{lemma}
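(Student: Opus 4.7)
The plan is to reduce everything to the level-crossing setting used throughout Section~\ref{aux_results}, where the taboo density coincides with a Green's function whose regularity properties are already at hand from \cite{GarroniMenaldi1992}.

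\textbf{Step 1 (Reduction).} As in the proof of Lemma~\ref{BKE}, work with $\Y_t := \X_t - g(t)$ and use the identity \eqref{defn:Y_sx}. For any Borel $A \subset (-\infty, g(t))$,
\begin{equation*}
\Px(X_t^{s,x} \in A,\ \tauX^{s,x} > t) = \Px(Y_t^{s, x-g(s)} \in A - g(t),\ \tauY^{s, x-g(s)} > t),
\end{equation*}
so a change of variable shows that, if the taboo density $\qY(s,x;t,y)$ for~$\Y$ exists, then the one for~$\X$ is given by
\begin{equation*}
\q(s,x;t,y) = \qY(s, x-g(s);\, t,\, y-g(t)),\qquad y < g(t).
\end{equation*}
Thus all three assertions for $q$ follow from the same assertions for $\qY$ (continuity of $g$ and of $\partial_y$ being trivial).

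\textbf{Step 2 (Identification with a Green's function).} From \eqref{lemma:G_q} in the proof of Lemma~\ref{BKE}, we have $\qY(s,x;t,y) = G(T-s, x; T-t, y)$ for $(s,x)\in \Q$, $(t,y)\in (0,T]\times(-\infty,0)$, $s<t$, where $G$ is the Green's function of the initial-boundary value problem~\eqref{PDE_rect}. Hence the existence of $\qY$, and therefore of $q$, is already established.

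\textbf{Step 3 (Differentiability in $y$).} Under (C1)--(C3), Theorem~1.10 in Chapter~VI of \cite{GarroniMenaldi1992} yields not only the existence and Gaussian bounds on the backward derivatives $\partial_x^\ell G$ that were used in Lemma~\ref{lemma:boundary_grad}, but also analogous results for derivatives with respect to the forward spatial variable, together with joint continuity of $\partial_y G(r,x;t,y)$ in $(r,x,t,y)$ on its domain. From the representation $\qY(s,x;t,y) = G(T-s,x;T-t,y)$, this gives the continuous differentiability of $\qY(s,x;t,\,\cdot\,)$ on $(-\infty,0)$.

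\textbf{Step 4 (Limit at the boundary).} To produce $\lim_{y\uparrow 0}\partial_y \qY(s,x;t,y)$, we mimic the argument of Lemma~\ref{lemma:boundary_grad}, but applied to the forward variable. The Hölder-Gaussian estimate for $\partial_y G(r,x;t,\cdot)$ (the forward analogue of \eqref{grad_bd_est}), also contained in Theorem~1.10 of \cite{GarroniMenaldi1992}, provides
\begin{equation*}
|\partial_y G(r,x;t,y) - \partial_y G(r,x;t,y')| \leq c_0 \frac{|y-y'|^{\alpha}}{(r-t)^{(3+\alpha)/2}}\exp\Bigl\{-c_1\frac{\min((x-y)^2,(x-y')^2)}{r-t}\Bigr\}
\end{equation*}
for $y,y'\in (-\infty,0]$, uniformly for $(r,x;t)$ in any compact subset of the admissible set with $r>t$. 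The Cauchy criterion applied as $y,y'\uparrow 0$ then yields the existence of a finite one-sided limit $\lim_{y\uparrow 0}\partial_y G(T-s,x;T-t,y)$, and hence of $\lim_{y\uparrow g(t)}\partial_y \q(s,x;t,y)$, completing the proof.

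\textbf{Anticipated obstacle.} The principal task is to be sure that Theorem~1.10 in Chapter~VI of \cite{GarroniMenaldi1992} does provide the required estimates in the forward spatial variable, not merely in the backward one that was exploited in the proof of Lemma~\ref{lemma:boundary_grad}. This is expected, since $G$ also satisfies the adjoint parabolic equation in $(t,y)$ with the same Dirichlet condition at the boundary, so by duality the same scale of Gaussian-Hölder bounds applies; the technical verification is the only non-routine point.
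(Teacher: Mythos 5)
Your Steps 1--2 (reduction to the shifted process $\Y$ and identification of the taboo density with the Green's function $G$ of \eqref{PDE_rect} via \eqref{lemma:G_q}) are exactly the paper's starting point, and they do settle the existence claim. The gap is in Steps 3--4: you assert that Theorem~1.10 in Chapter~VI of \cite{GarroniMenaldi1992} also supplies Gaussian--H\"older bounds and continuity for the derivative $\partial_y G(r,x;t,y)$ in the \emph{forward} spatial variable. That theorem, as used in Lemma~\ref{BKE} and Lemma~\ref{lemma:boundary_grad}, controls only the derivatives of $G$ with respect to its first pair of arguments $(r,x)$ (the backward variables); it gives no estimate on $\partial_y G$ directly. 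Writing down the ``forward analogue of \eqref{grad_bd_est}'' is therefore not a citation but precisely the statement that needs proof, and your Cauchy-criterion argument in Step~4 has nothing to rest on until it is established. You do flag this in your ``anticipated obstacle'' paragraph and name the correct cure (duality with the adjoint equation), but you leave the verification aside, and that verification is the actual content of the second half of the lemma.

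Concretely, the paper closes this gap as follows: one introduces the adjoint operator $\A^*$ and the adjoint initial-boundary value problem \eqref{PDE_rect_adjoint}, checks --- using the second-order smoothness of $\sigmaY$ and first-order smoothness of $\muY$ guaranteed by (C3), which is exactly why those hypotheses appear in the lemma --- that the coefficients of $\A^*$ satisfy the conditions of Theorem~17 in Chapter~2, \S7 of \cite{Friedman1964}, and concludes the duality relation $G(t,y;s,x) = G^*(s,x;t,y)$, where $G^*$ is the Green's function of the adjoint problem. This yields $q(s,x;t,y) = G^*(T-t,\,y-g(t);\,T-s,\,x-g(s))$, so that differentiation in $y$ becomes differentiation of $G^*$ in its \emph{backward} spatial argument, to which the Garroni--Menaldi Gaussian bounds do apply; boundedness and continuity of $\partial_y q$ up to the boundary $y=g(t)$ then follow as in Lemma~\ref{lemma:boundary_grad}. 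So your overall route is the intended one, but to make the proof complete you must replace the unsupported forward-variable citation by this adjoint/duality step (or an equivalent argument), rather than leaving it as a remark.
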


\begin{proof}
Using \eqref{lemma:G_q}, for $0\leq s<t\leq\T,$ $x < g(s),$ $y<g(t),$ one has
\begin{align*}
	\Px(\X_t^{s,x} \leq y ; \tauX^{s,x} >  t) &= \Px(\Y_t^{s,x-g(s)} + g(t)\leq y; \tauY^{s,x-g(s)} >t )\\
		&= \int_{-\infty}^{y-g(t)} G(T-s,x-g(s); T-t,  z)\,dz.
\end{align*}
Therefore, the desired taboo transition density exists and is given by
\begin{equation}\label{greens_q}
	\q(s,x;t,y)  = G(T-s,x - g(s) ;T-t,y - g(t) ).
\end{equation}
We will now proceed to prove the second claim of the lemma.   Let $\A^*$ be the adjoint of the operator $\A$ defined in \eqref{defn:A}: for compactly supported $f \in \contSpace^{1,2}(\Q),$
\begin{equation*}
	(\A^*f)(r, y) = \partial_y[\muY(T-r,y)f(r,y)] - \mbox{$\frac{1}{2}$}\partial_{yy}[\sigmaY^2(T-r,y)f(r,y)] - (\partial_rf)(r,y),
\end{equation*}
see e.g.\ Section 1.8 in~\cite{Friedman1964}. Consider the adjoint problem to \eqref{PDE_rect}: for a non-negative compactly supported $\phi \in \contSpace(\Q),$
\begin{equation}\label{PDE_rect_adjoint}
	\begin{cases}
		(\A^* w  - \phi )(r,y)= 0 , & (r,y) \in \Q, \\
		w(r,y) = 0,  & (r,y) \in \partial \Q\backslash \{r=\T\}.
	\end{cases}
\end{equation}
Using the product rule, it follows from the differentiability of $\muY$ and $\sigmaY$ (ensured by condition (C3)) that the coefficients of $\A^*$ satisfy the conditions in Theorem~17 from Chapter~2, \S7 in~\cite{Friedman1964}. Hence, for $(s,x),\,(t,y)\in \Q,$ $s<t,$ one has
\begin{equation}\label{adjoint_greens_func}
	G(t,y;s,x) = G^*(s,x;t,y),
\end{equation}
where $G^*$ is the Green's function for the adjoint problem \eqref{PDE_rect_adjoint}. Using relations \eqref{greens_q} and \eqref{adjoint_greens_func}, the following equality holds for $0\leq s<t\leq\T,$ $x < g(s),$ $y<g(t):$
\begin{equation}\label{forward_greens_function}
	q(s,x; t, y) = G^*(T-t, y-g(t); T-s,x-g(s)).
\end{equation}
Using the ``Gaussian bounds'' for the derivatives of $G^*(s,x;t,y)$ with respect to~$s$ and~$x$ from Theorem~1.10 in Chapter.~VI in~\cite{GarroniMenaldi1992},
it follows that $\partial_yG^*(T-t,y;T-s,x)$ is bounded for all $y\leq 0.$ Hence \eqref{forward_greens_function} implies that $\partial_y\q(s,x;t,y)$ exists and is continuous up to the boundary. Lemma \ref{lemma:existence_taboo} is proved.
\end{proof}

\begin{lemma}\label{lemma:fpt_cont}
The distribution of the boundary hitting time $\tau = \tau^{0,x_0}$ has a continuous density $f_{\tau}(t)$ on $(0,\T)$ that admits the following representation:
\begin{equation}\label{first_passage_time_density_rep}
	f_{\tau}(t) = -\mbox{$\frac{1}{2}$}\sigmaX^2(t,g(t)) \lim_{y\uparrow g(t)}\partial_yq(0,x_0;t,y),\quad t\in (0,\T).
\end{equation}
\end{lemma}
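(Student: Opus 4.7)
The plan is to derive the formula by starting from the standard identity $\mathbf{P}(\tau > t) = \int_{-\infty}^{g(t)} q(0,x_0;t,y)\,dy$ (which holds by the existence of the taboo density established in Lemma~\ref{lemma:existence_taboo}), differentiating in $t$, and using the forward Kolmogorov equation together with the Dirichlet condition $q(0,x_0;t,g(t))=0$ to isolate a boundary flux term. This is the classical Fokker--Planck flux interpretation of the first-passage density.

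First, I would verify that $q(0,x_0;t,y)$, as a function of $(t,y)\in\Q$, satisfies the forward Kolmogorov equation
\begin{equation*}
\partial_t q = -\partial_y[\muX(t,y)q] + \tfrac{1}{2}\partial_{yy}[\sigmaX^2(t,y)q],\qquad y<g(t),
\end{equation*}
with Dirichlet boundary condition $q(0,x_0;t,g(t))=0$. This follows from the representation~\eqref{forward_greens_function}, i.e.\ $q(0,x_0;t,y) = G^*(T-t,y-g(t);T,x_0-g(0))$, together with the defining PDE for the adjoint Green's function $G^*$ of problem~\eqref{PDE_rect_adjoint}; the chain rule converts the $(r,z)=(T-t,y-g(t))$ adjoint equation into the forward equation in $(t,y)$ after collecting the $\dot{g}(t)\partial_y$ terms, and the continuity of $\partial_y q$ up to the boundary is provided by Lemma~\ref{lemma:existence_taboo}.

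Next, applying the Leibniz rule,
\begin{equation*}
\mbox{$\frac{d}{dt}$}\mathbf{P}(\tau>t) = \dot{g}(t)\,q(0,x_0;t,g(t)) + \int_{-\infty}^{g(t)}\partial_t q(0,x_0;t,y)\,dy,
\end{equation*}
where the first term vanishes by the Dirichlet condition. Substituting the forward equation into the integrand and integrating, the convective term $-\partial_y[\muX q]$ contributes $-\muX(t,g(t))q(0,x_0;t,g(t))=0$ at the upper limit, while the diffusive term integrates to
\begin{equation*}
\tfrac{1}{2}\bigl[\partial_y(\sigmaX^2 q)\bigr]_{y=g(t)^-} = \tfrac{1}{2}\sigmaX^2(t,g(t))\lim_{y\uparrow g(t)}\partial_y q(0,x_0;t,y),
\end{equation*}
again using $q(0,x_0;t,g(t))=0$. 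The boundary terms at $y=-\infty$ vanish thanks to the Gaussian upper bounds on $G^*$ and $\partial_y G^*$ from Theorem~1.10 in Chapter~VI of~\cite{GarroniMenaldi1992}, which also justify differentiating under the integral sign. Since $f_\tau(t) = -\frac{d}{dt}\mathbf{P}(\tau>t)$, relation~\eqref{first_passage_time_density_rep} follows, and continuity of $f_\tau$ on $(0,T)$ is inherited from the continuity of $\sigmaX(t,g(t))$ and of $\lim_{y\uparrow g(t)}\partial_y q(0,x_0;t,y)$ in $t$, the latter being a consequence of the H\"older estimates for the Green's function derivatives used analogously to the proof of Lemma~\ref{lemma:boundary_grad}.

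The main obstacle I anticipate is the rigorous justification of the interchange of limit, integral and $t$-derivative together with the vanishing of the boundary contribution at $y=-\infty$. These are not conceptual difficulties but require careful appeal to the Gaussian bounds on $G^*$ and its $y$-derivatives and to the continuity of $\partial_y q$ up to the upper boundary established in Lemma~\ref{lemma:existence_taboo}; once those are invoked the identification of $f_\tau$ with the diffusive flux at the boundary is immediate.
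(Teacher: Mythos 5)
Your proposal is correct and follows essentially the same route as the paper: both differentiate the survival probability, use the adjoint (forward) equation satisfied by the Green's function $G^*$ from Lemma~\ref{lemma:existence_taboo} together with the Gaussian bounds of~\cite{GarroniMenaldi1992} to justify differentiation under the integral and the vanishing of the terms at $y=-\infty$, and identify $f_\tau$ with the diffusive flux $-\frac12\sigmaX^2(t,g(t))\lim_{y\uparrow g(t)}\partial_y q$ at the boundary. The only difference is cosmetic: the paper works in the shifted $Y$-coordinates where the boundary is fixed at $0$, whereas you work in the original coordinates with the moving boundary $g(t)$, which merely adds the Leibniz term $\dot g(t)q(0,x_0;t,g(t))$ that vanishes by the Dirichlet condition.
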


\begin{remark}
The existence of the first crossing time density for Brownian motion and a boundary function with bounded derivative was established in~\cite{Fortet1943}. In~\cite{Strassen1967} it was shown, again for the Brownian motion, that the density exists and is continuous in each open interval where the boundary is continuously differentiable (see also~\cite{Peskir2002} for a simplified proof of this result). The existence of the first crossing time density was established for a Lipschitz boundary~$g$ and a general time-homogeneous diffusion process with differentiable coefficients in~\cite{Downes2008}. In Lemma~\ref{lemma:fpt_cont} we extend the existence of the first passage time density to the case of a time-inhomogeneous diffusion with twice-differentiable coefficients and a twice-continuously differentiable boundary.
\end{remark}

\begin{proof}[Proof of Lemma~\ref{lemma:fpt_cont}]
Since $\tauX = \tauY$, it follows from~\eqref{forward_greens_function} that, for $t\in (0,\T),$
\begin{align*}
	\Px(\tauX \geq t) &= \Px(\tauY \geq t) =\int_{-\infty}^{0} \Px(\Y_t \in dy, \tauY \geq t) \,dy\\
	&= \int_{-\infty}^{0} G^*(T-t,y;T,y_0)\,dy.
\end{align*}
The last integral here is differentiable with respect to $t,$ with its derivative equal to
\begin{equation*}
	\partial_t\int_{-\infty}^{0} G^*(T-t,y;T,y_0)\,dy=- \int_{-\infty}^0 (\partial_t G^*)(T-t,y;T,y_0) \,dy,
\end{equation*}
where the interchange of the order of differentiation and integration is justified by the above-mentioned  ``Gaussian bounds" from~\cite{GarroniMenaldi1992} on the derivative of $G^*(s,x;t,y)$ with respect to $s$. Using $(\A^*G^*)(r,y) =0,$ where the operator $\A^*$ is applied to $G^*$ as a function of its first two arguments, it follows that
\begin{align*}
	f_{\tau}(t) &= -\partial_t \Px(\tau >t) =\int_{-\infty}^0 (\partial_r G^*)(r,y;T,y_0)|_{r=T-t} \,dy\\
	&= \int_{-\infty}^0 \bigl(\partial_y[\muY(T-r,y) G^*(r,y;T,y_0)] \\
	&\qquad\qquad- \mbox{$\frac{1}{2}$}\partial_{yy}[\sigmaY^2(T-r,y)G^*(r,y;T,y_0)] \bigr)\big|_{r=T-t}\,dy\\
	&= -\mbox{$\frac{1}{2}$}\sigmaY^2(t,0)\partial_{y}G^*(T-t,y;T,y_0)|_{y=0},
\end{align*}
where we used the boundary conditions $G^*(t,0;s,x)=G^*(t,-\infty;s,x) =0$ and the observation that $\lvert \partial_y\overline{\sigma}^2 \rvert$ cannot grow faster than a linear function of $y \to -\infty,$ while~$G^*$ vanishes exponentially fast due to the above-mentioned bounds from~\cite{GarroniMenaldi1992}. Now representation \eqref{first_passage_time_density_rep} follows from $\eqref{forward_greens_function}$. The continuity of $f_{\tau}(t)$ follows from the continuity of $\sigmaY(t,y)$ and $\partial_y q(0,y_0;t,y)|_{y=g(t)}$ in~$t.$
\end{proof}

\begin{lemma}\label{lemma:hypoellip}
The derivatives $\dot{v}'$ and $v'''$ exist  and are H\"older continuous in~$\D$ with  H\"older exponent $\alpha.$
\end{lemma}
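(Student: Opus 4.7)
The plan is to perform a one-step parabolic bootstrap, leveraging the additional smoothness of $\muX$ and $\sigmaX$ furnished by (C3) beyond what was used to obtain the $C^{1,2}(\D)$-regularity of $v$ in Lemma~\ref{BKE}. Since $g\in \boundarySpace$, relation~\eqref{change_of_variable} reduces the problem to the analogous claim for $\bv$, which solves $\heatY \bv = 0$ in~$\Q$; the chain rule together with $g\in C^2$ will then transport the regularity back to~$v$.

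Setting $w := \bv'$ and differentiating $\heatY\bv=0$ formally in~$x$, the candidate PDE for~$w$ is
\begin{equation*}
    \dot w + (\muY + \sigmaY\sigmaY')\,w' + \tfrac{1}{2}\sigmaY^2\,w'' = -(\partial_x\muY)\,w\quad\text{in }\Q.
\end{equation*}
This equation is uniformly parabolic by (C2), and by (C3) all of the coefficients $\muY + \sigmaY\sigmaY'$, $\sigmaY^2$, $\partial_x\muY$ are H\"older continuous with exponent $\alpha$ in $x$ and $\alpha/2$ in $t$; since $w=\bv'$ is continuous by Lemma~\ref{BKE}, the right-hand side inherits the same H\"older regularity on compact subsets of $\Q$. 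Once we know that $w$ is a \emph{classical} solution of this PDE, interior Schauder estimates for linear parabolic equations (see e.g.\ \cite{Friedman1964} or Chapter~VI of~\cite{GarroniMenaldi1992}) yield $w \in C^{2+\alpha,\,1+\alpha/2}_{\mathrm{loc}}(\Q)$, whence $\bv''' = w''$ and $\dot{\bv}' = \dot w$ are locally H\"older continuous with exponent $\alpha$ in $x$ and $\alpha/2$ in $t$; transporting this back via $v(s,x)=\bv(s,x-g(s))$ and $g\in C^2$ gives the claim for~$v$.

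The main obstacle will be establishing that $w$ really is a classical solution of the differentiated equation, rather than merely a formal one. My intended route is the Green's function representation $\bv(s,x) = \int_{-\infty}^0 G(T-s,x;0,y)\,dy$ derived in the proof of Lemma~\ref{BKE}: the Gaussian-type bounds on the higher-order derivatives of~$G$ supplied by Theorem~1.10 in Chapter~VI of~\cite{GarroniMenaldi1992} (which are available under the enhanced smoothness of the coefficients assumed in (C3)) license differentiation under the integral sign, which simultaneously yields $w\in C^{1,2}(\Q)$ and makes the derived PDE for~$w$ hold pointwise. After that, the Schauder estimate closes the argument and delivers the claimed local H\"older regularity of $\dot v'$ and $v'''$.
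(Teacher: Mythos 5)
There is a genuine gap at exactly the point you flag as ``the main obstacle''. Your plan needs $w=\bv'$ to be a classical $C^{1,2}$ solution of the differentiated equation, and you propose to get this by differentiating $\bv(s,x)=\int_{-\infty}^0 G(T-s,x;0,y)\,dy$ three times in $x$ (and once in $t$ after one $x$-differentiation) under the integral sign, invoking the ``Gaussian bounds'' of Theorem~1.10 in Chapter~VI of~\cite{GarroniMenaldi1992}. But that theorem controls $\partial_t^r\partial_x^s G$ only for $2r+s\le 2$ (together with H\"older-type increment estimates such as~\eqref{grad_bd_est}); it does not furnish Gaussian bounds on $\partial_x^3 G$ or $\partial_t\partial_x G$, and the extra smoothness in (C3) is used elsewhere in the paper to make the \emph{adjoint} problem admissible, not to upgrade the Green-function estimates to third order. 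So the step that is supposed to turn the formal differentiation of $\heatY\bv=0$ into a pointwise identity is unsupported; and note that if such third-order bounds \emph{were} available, your subsequent Schauder step would be redundant, since the desired existence and H\"older continuity of $\bv'''$ and $\dot{\bv}'$ would already follow directly from differentiation under the integral --- which makes the argument circular in spirit.

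The paper avoids the issue entirely: it applies the interior higher-regularity theorem for parabolic equations (Theorem~10 in Chapter~3 of~\cite{Friedman1964}) directly to $v$, using that $\heatX v=0$ in $\D$ and that (C3) guarantees the coefficients of $\heatX$ and their relevant $x$-derivatives are H\"older continuous; this immediately yields the existence and H\"older continuity of $\dot v'$ and $v'''$. Your bootstrap idea can be salvaged, but the legitimate route is essentially a re-proof of that theorem: justify the differentiated equation by difference quotients (the quotients $\delta^{-1}(\bv(s,x+\delta)-\bv(s,x))$ solve uniformly parabolic equations with coefficients and right-hand sides that are H\"older uniformly in $\delta$, so interior Schauder estimates give compactness and allow passage to the limit), rather than by differentiating the Green-function representation beyond the order the cited estimates permit. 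Simpler still is to cite the interior regularity theorem as the paper does.
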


The assertion of Lemma \ref{lemma:hypoellip} follows from Theorem 10 in Chapter 3 in~\cite{Friedman1964} since $\heatX \V =0$ and the coefficients of $\heatX$ satisfy our condition (C3).

\begin{lemma}\label{MG_rep}
For $(s,x) \in \D,$
	\begin{equation}\label{ind_martingale_rep}
	\mathbf{1}_{\tau^{s,x} = T} = \V(s,x) +  \int_s^{T} ( \V' \, \sigmaX)(t,X_t^{s,x})\bm{1}_{\tau^{s,x} >t}\,dW_t,\quad \Px\text{-a.s}.
	\end{equation}
\end{lemma}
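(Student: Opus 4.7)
The plan is to apply It\^o's formula to $\V(t,\X_t^{s,x})$ stopped at $\tauX^{s,x}$, exploit the identity $\heatX \V =0$ established in Lemma \ref{BKE} to eliminate the drift term, and then pass to the limit $t\uparrow T$, identifying the limit of the left-hand side through martingale convergence.

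First, by Lemmas \ref{BKE} and \ref{lemma:boundary_grad}, $\V\in \contSpace^{1,2}(\D)$ with $\V'$ and $\V''$ uniformly bounded on every $D_{T-\varepsilon}$ and $\V(u,g(u))=0$ for $u\in (0,T)$, while $\heatX \V \equiv 0$ on $\D$. A standard localization argument then justifies the application of It\^o's formula to $\V(t,\X_t^{s,x})$ on $[s,t\wedge \tauX^{s,x}]$ for $t\in (s,T)$, which yields
\[
    M_t := \V(t\wedge \tauX^{s,x}, \X_{t\wedge \tauX^{s,x}}^{s,x}) = \V(s,x) + \int_s^{t\wedge \tauX^{s,x}} (\V'\,\sigmaX)(u,\X_u^{s,x})\,dW_u,\quad t\in [s,T).
\]

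Second, by the definition of $\V$ and the strong Markov property of $\X$, the process $\{M_t\}$ coincides with the bounded Doob martingale $\Ex[\bm{1}_{\tauX^{s,x}=T}\mid \mathcal{F}_{t\wedge \tauX^{s,x}}]$. Indeed, on $\{\tauX^{s,x}>t\}$ one has $\V(t,\X_t^{s,x})=\Ex[\bm{1}_{\tauX^{s,x}=T}\mid \mathcal{F}_t]$, while on $\{\tauX^{s,x}\le t<T\}$ both sides vanish since $\V(\tauX^{s,x},g(\tauX^{s,x}))=0$. It\^o's isometry combined with $\lvert M_t\rvert\le 1$ then gives
\[
    \Ex\int_s^{t\wedge \tauX^{s,x}}(\V'\,\sigmaX)^2(u,\X_u^{s,x})\,du = \Ex[(M_t-\V(s,x))^2] \le 1,
\]
and monotone convergence as $t\uparrow T$ yields $\Ex\int_s^{T}(\V'\,\sigmaX)^2(u,\X_u^{s,x})\bm{1}_{\tauX^{s,x}>u}\,du\le 1$, so that the stochastic integral on the right-hand side of \eqref{ind_martingale_rep} is a well-defined square-integrable random variable.

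Finally, letting $t\uparrow T$ and using that the augmented Brownian filtration is continuous, Doob's martingale convergence theorem gives $M_t\to \bm{1}_{\tauX^{s,x}=T}$ almost surely and in $L^2$. By the $L^2$ continuity of the It\^o integral, the stochastic integral in the display above converges in $L^2$ to the integral in~\eqref{ind_martingale_rep}. Passing to the limit in the identity defining $M_t$ then produces~\eqref{ind_martingale_rep}. The main technical obstacle is the application of It\^o's formula up to the boundary $\{x=g(t)\}$, where $\V$ is not classically smooth beyond $\cl(\D)$; this is circumvented by stopping strictly inside $\D$ and exploiting the boundary vanishing of $\V$ together with the finite boundary limits of $\V'$ and $\V''$ granted by Lemma~\ref{lemma:boundary_grad}, while the possible singularity at the corner $(T,g(T))$ is bypassed by the abstract probabilistic convergence argument in the last step rather than any pointwise continuity statement.
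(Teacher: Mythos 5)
Your argument is correct and follows essentially the same route as the paper: apply It\^o's formula to $\V$ along the stopped process, use $\heatX \V=0$ from Lemma~\ref{BKE} to kill the drift, and identify the terminal value as $\bm{1}_{\tauX^{s,x}=T}$ via the boundary values of $\V$. The only difference is cosmetic: the paper applies It\^o's formula directly up to $\tauX^{s,x}$ and reads off $\V(\tauX^{s,x},\X_{\tauX^{s,x}}^{s,x})=\bm{1}_{\tauX^{s,x}=T}$ pointwise, whereas you stop strictly before $T$ and pass to the limit $t\uparrow T$ by martingale convergence and It\^o isometry, which is a slightly more careful treatment of the corner at $(T,g(T))$ but not a different method.
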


\begin{remark}
In the case of the standard Brownian motion process and a flat boundary $g(t)= \text{const},$ the martingale representation in~\eqref{ind_martingale_rep} was obtained in Lemma~1 of~\cite{Shiryaev2004} using the moment generating function of~$\tau$ and It\^o's formula. This was extended to the case of time-homogeneous diffusion processes below a fixed level in~\cite{Feng2015}. Our Lemma~\ref{MG_rep} uses a different approach to extend the martingale representation to time-inhomogeneous diffusions below time-dependent boundaries, where the techniques employed in~\cite{Shiryaev2004} and~\cite{Feng2015} do not work.
\end{remark}

\begin{proof}[Proof of Lemma~\ref{MG_rep}]
Since $\V\in\contSpace^{1,2}(\D)$ and $\heatX \V =0$ by Lemma \ref{BKE}, by It\^o's formula one has
\begin{equation*}
	\V( \tauX^{s,x} , \X_{ \tau^{s,x}}^{s,x}) = \V(s,x) + \int_s^{\tauX^{s,x}}(\V' \, \sigmaX)(t,\X_t^{s,x})\,dW_t.
\end{equation*}
As $\V(\T,\X_{\T}^{s,x}) =1$ a.s.\ and $\V(\tau^{s,x},X_{\tau^{s,x}}^{s,x})  = \V(\tau^{s,x}, g(\tau^{s,x})) = 0$ on the event $\{\tau^{s,x} <T\},$ it follows that
\begin{equation*}
	\V(\tauX^{s,x} , \X_{\tau^{s,x}}^{s,x}) = \V(\T,\X_{\T}^{s,x})\mathbf{1}_{\tauX^{s,x} = \T} + \V(\tauX^{s,x},\X_{\tauX^{s,x}}^{s,x})\mathbf{1}_{\tauX^{s,x} < \T} = \mathbf{1}_{\tauX^{s,x} = \T}.
\end{equation*}
Combining this with the previous displayed formula, we obtain \eqref{ind_martingale_rep}.
\end{proof}

\section{Proofs of main results}\label{proof_main_results}

\begin{proof}[Proof of Proposition \ref{doob_transform}]
Here we are using the approach from \S\,IV.39 in~\cite{RogersWilliamsII}. Since $v' \in \contSpace^{1,2}(\D)$ due to Lemma \ref{lemma:hypoellip}, we can apply It\^o's formula to obtain
\begin{equation}\label{eqn:dh}
	v'(t\wedge \tau,\X_{t\wedge\tau}) = v'(0,X_0) + \int_0^{t\wedge\tau} (\heatX v')(s,\X_s) \,ds + \int_0^{t\wedge \tau}(\sigmaX \,v'')(s,\X_s)\,dW_s.
\end{equation}
As $\heatX v =0 $ in $\D$ by Lemma~\ref{BKE}, due to the linearity of differentiation we also have
\begin{equation*}\label{eqn:Lv'}
	\heatX v' = \heatX \hd =0 \quad \text{in } \D.	
\end{equation*}
Hence it follows from \eqref{eqn:dh} and the definition of $\hD_t$ in \eqref{defn:Z} that $\hD$ satisfies the following linear stochastic equation:
\begin{equation*}\label{linear:SDE}
	\hD_{t} =1 + \int_0^{t\wedge \tau}\frac{\gamma}{\sigmaX}(s,\X_s) \hD_s\,dW_s, \quad t\in[0,\T],
\end{equation*}
whose unique solution is given by the Dol\'eans exponential $\mathcal{E}(V)_t$ of the process
\begin{equation*}
	V_t := \int_0^{t\wedge \tau} \frac{\gamma}{\sigmaX}(s,X_s)\,dW_s,\quad t\in [0,\T].
\end{equation*}
That is, denoting by $[V]_t := \int_0^{t\wedge \tau}\frac{\gamma^2}{\sigma^2}(s,\X_s)\,ds,$ $t\in[0,\T],$ the quadratic variation process of~$V,$ we have
\begin{equation*}\label{rel:Z_MG}
	\hD_t = \mathcal{E}(V)_{t}\equiv \exp\{V_t - \mbox{$\frac{1}{2}$}[V]_t\},\quad t\in[0,\T].
\end{equation*}
Since, for any $\varepsilon \in (0,\T),$ the function $\V'$ (and hence $\hd$ as well) is bounded in $\cl(D_{\T-\varepsilon})$ due to Lemma~\ref{lemma:boundary_grad}, it follows that $\hD_{t\wedge (\T-\varepsilon)} =\mathcal{E}(V)_{t\wedge (\T-\varepsilon)}$ is a bounded martingale, and thus by the Cameron--Martin--Girsanov theorem and the Daniell--Kolmogorov extension theorem (see e.g.\ p.~82 in~\cite{RogersWilliamsII}) there exists a unique measure $\mathbf{Q}^{\varepsilon}$ on $\mathcal{F}_T$ which is absolutely continuous with respect to $\Px$ and $N_t$ is a version of the Radon--Nikodym derivative of $\mathbf{Q}^{\varepsilon}$ with respect to $\Px$ when the measures are restricted to $\mathcal{F}_t$:
\begin{equation*}\label{dQedP_t}
	 \frac{d\mathbf{Q}^{\varepsilon}}{d\Px}\biggr|_{\mathcal{F}_{t}} = \hD_{t\wedge (T-\varepsilon)}\quad \Px\text{-a.s.},\quad t\in[0,\T],
\end{equation*}
or, equivalently, $\mathbf{Q}^{\varepsilon}(A) = \Ex N_{t\wedge (\T - \varepsilon)}\bm{1}_A,$ $A \in \mathcal{F}_t.$ Moreover, under $\mathbf{Q}^{\varepsilon},$
\begin{equation*}
	\widetilde{W}_{t}^{(\varepsilon)} := W_{t\wedge \tau\wedge (T-\varepsilon)} - \int_0^{t\wedge \tau\wedge (T-\varepsilon)} \frac{\gamma}{\sigmaX}(s,X_s) \,ds ,\quad t\in [0,\T],
\end{equation*}
is a stopped $\{\mathcal{F}_t : t\in[0,\T]\}$-Brownian motion. Hence there is a measure $\mathbf{Q}$ on $\contSpace([0,\T))$ such that $\mathbf{Q}|_{\mathcal{F}_{\T-\varepsilon}} =\mathbf{Q}^{\varepsilon}|_{\mathcal{F}_{\T-\varepsilon}}$ for all $\varepsilon>0,$ (cf.\ p.\,88 in~\cite{RogersWilliamsII}) and thus relation \eqref{lemma:stopped_X} is proved.
\end{proof}

\begin{proof}[Proof of Theorem \ref{thm:Z}]
The fact that $\Ex \,v'(t,X_t)\bm{1}_{\tau >t}$ is finite for all $t\in [0,\T)$ is immediate from the uniform boundedness of $v' $ in $\cl(D_{\T-\varepsilon}),$ $\varepsilon\in (0,\T),$ established in Lemma~\ref{lemma:boundary_grad}.
Representation \eqref{eqn:psi_rep} follows directly from Proposition~\ref{doob_transform}: for $t \in [0,\T),$
\begin{equation*}
	\psi(t)=-\Ex \,v'(t,X_t)\bm{1}_{\tau >t}  =  -v'(0,x_0)\Ex\, N_t \bm{1}_{\tau >t} = -v'(0,x_0)\mathbf{Q}(\tau >t).
\end{equation*}
For the taboo transition density of $X$ under $\mathbf{Q},$ one can easily see that,
\begin{equation*}\label{defn:q_hd}
	q^{\hd}(s,x;t,y) := \mathbf{Q}(X_t^{s,x}\in dy, \tauX^{s,x} > t)/dy =  \frac{\hd(t,y)}{\hd(s,x)}q(s,x;t,y)
\end{equation*}
for $(s,x)\times(t,y)\in\D^2,$ $s < t.$ It follows from Lemmata \ref{lemma:boundary_grad} and \ref{lemma:existence_taboo} that $q^{\hd}(s,x;t,y)$ is also continuously differentiable with respect to $y$ up to the boundary $g(t)$, so we can repeat the steps in the proof of Lemma \ref{lemma:fpt_cont} to establish the existence and continuity of the first passage time density $f_{\tau}^{\mathbf{Q}}$ of $\tau,$ obtaining \eqref{eqn:psi_dot_1}.
Now we will show that $\lim_{t\uparrow \T}\psi(t)$ exists, ensuring the continuity of $\psi(t)$ in $t\in [0,\T].$ Since the taboo transition density $q$ exists due to Lemma~\ref{lemma:existence_taboo}, we have
\begin{equation*}
	\psi(t) = -\Ex \,\V'(t,X_t)\bm{1}_{\tau >t} = -\int_{-\infty}^{g(t)} \V'(t,y) q(0,x_0;t,y)\,dy.
\end{equation*}
Since $q(0,x_0;t,y)$ is continuously differentiable with respect to $y$ up to the boundary $g(t)$ by Lemma~\ref{lemma:existence_taboo}, using integration by parts and the boundary conditions $\V(t,g(t)) = 0,$ $\lim_{y\downarrow -\infty}\V(t,y)=1,$ and that $\lim_{y\downarrow -\infty}q(0,x_0;t,y)=0,$ we get
\begin{equation*}
	\psi(t) =  \int_{-\infty}^{g(t)} \V(t,y)\partial_yq(0,x_0;t,y)\,dy.
\end{equation*}
Using the change of variable $z + g(t) = y$ and relation \eqref{forward_greens_function}, we have
\begin{equation}\label{psi_integ_parts}
	\psi(t)
	= \int_{-\infty}^{0}\V(t,z + g(t))\partial_y G^*(T-t,z;T,x_0-g(0))\,dz.
\end{equation}
Since $0\leq v \leq 1,$ it follows from Theorem 1.10 in Chapter~VI in \cite{GarroniMenaldi1992} that there exists constants $C$ and $c_0$ which only depend on $\muY$ and $\sigmaY$ such that the absolute value of the integrand in \eqref{psi_integ_parts} is bounded by
\begin{equation*}
	 \frac{C}{t}\exp\left\{-\frac{c_0}{t}(z - x_0 + g(0))^2\right\} \leq \frac{2C}{T}\exp\left\{-\frac{2c_0}{T}(z - x_0 + g(0))^2\right\},\quad t\in [T/2,T].
\end{equation*}
Using the dominated convergence theorem, we can pass to the limit in \eqref{psi_integ_parts} as $t\uparrow \T$ to obtain
\begin{equation*}
	\psi(T-)= \int_{-\infty}^0 \partial_yq(0,x_0;\T,y+g(\T))\,dy= 0,
\end{equation*}
where we used the boundary conditions $q(0,x_0;\T,g(\T))= q(0,x_0;\T,-\infty) =0.$ Theorem \ref{thm:Z} is proved.
\end{proof}

\begin{proof}[Proof of Theorem \ref{thm:factorisation}]
Since $\tau$ is a stopping time and $X_{\tau} = g(\tau),$ using Proposition~\ref{doob_transform} we obtain, for $0<t <t+\delta < T,$
\begin{equation*}
	\frac{1}{\delta}\mathbf{Q}(\tau \in (t,t+\delta)) = \frac{1}{\delta}\Ex  \frac{v'(\tau, X_{\tau})}{v'(0,x_0)} \bm{1}_{\tau \in (t,t+\delta )} = \frac{1}{\delta}\int_t^{t+\delta}\frac{v'(s,g(s))}{v'(0,x_0)}f_{\tau}(s)\,ds,
\end{equation*}
where the existence of $f_{\tau}$ was established in Lemma~\ref{lemma:fpt_cont}. Both $v'(s,g(s))$ and $f_{\tau}(s)$ are continuous at $t$ (by Lemmata \ref{lemma:boundary_grad} and \ref{lemma:fpt_cont}, respectively), and hence we get, after letting $\delta \to 0,$ that there exists the density~$f_{\tau}^{\mathbf{Q}}(t) $ of~$\tau$ under~$\mathbf{Q}$ that is equal to the right-hand side of~\eqref{eqn:doob_h}. Theorem~\ref{thm:factorisation} is proved.
\end{proof}

\begin{proof}[Proof of Theorem \ref{thm:dF}]
\textbf{Step 1.} We will first prove the assertion of Theorem \ref{thm:dF} in the case when $ h \in \cmSpace$. For $\delta \in (0,1),$ one has
\begin{equation}\label{eqn:thm1_1}
	F(g + \delta h) = \Px(X \in S_{g + \delta h}) = \Px(X - \delta h \in S_g).
\end{equation}
Note that there is no loss of generality restricting $\delta$ to be positive since the sign of $\delta$ can be transferred to $h.$ Consider the process
\begin{equation}\label{defn:M^h}
	M_t^h := -\int_0^t \frac{\dot{h}(s)}{\sigmaX(s,X_s)}\,dW_s,\quad t\in [0,\T].
\end{equation}
Denoting by
\begin{equation}\label{defn:quad_var_M}
	[M^h]_t = \int_0^t \frac{\dot{h}(s)^2}{\sigmaX^2(s,X_s)}\,ds,\quad t\in[0,\T],
\end{equation}
the quadratic variation process of $M^h,$ the Dol\'eans exponential of $M^h$ is given by
\begin{equation}\label{defn:doleans_exp}
	\mathcal{E}(\delta M^h)_t = \exp\{ \delta M_t^h - \mbox{$\frac{1}{2}$} \delta^2 [M^h]_t \}, \quad t \in [0,\T].
\end{equation}
From assumption (C2), one has
\begin{equation}\label{quadratic_var_bound}
	[M^h]_{\T} \leq \|h\|_{\cmSpace}^2/\usigma^2,
\end{equation}
so that Novikov's criterion is satisfied:
\begin{equation*}
	\Ex \exp\{\mbox{$\frac{1}{2}$}[\delta M^h]_{\T} \}\leq \exp\{\mbox{$\frac{1}{2\usigma^2}$}\|h\|_{\cmSpace}^2\} < \infty.
\end{equation*}
Hence $\mathcal{E}(\delta M^h)$ is a uniformly integrable martingale (see e.g.\ Proposition 1.15 on p.~332 in~\cite{RevuzYor}) and so we can apply the Cameron--Martin--Girsanov theorem (see e.g.\ Proposition~1.13 on p.~331 in~\cite{RevuzYor}) to get from \eqref{eqn:thm1_1} that
\begin{equation*}\label{girsanov}
	F(g+\delta h) = \Ex [\mathcal{E}(\delta M^h)_{\T} ; X \in S_g].
\end{equation*}
Since $\mathcal{E}(\delta M^h)_t$ satisfies $d\mathcal{E}(\delta M^h)_t = \delta\mathcal{E}(\delta M^h)_tdM_t^h$ (see e.g.\ p.~149 in~\cite{RevuzYor}), one has
\begin{equation}\label{defn:Z_eps}
	\Lmg_{\delta} := \frac{\mathcal{E}(\delta M^h)_{\T} - 1}{\delta} = \int_0^{\T} \mathcal{E}(\delta M^h)_s \,dM_s^h,
\end{equation}
and hence
\begin{equation}\label{diff_quotient}
	\frac{F(g + \delta h) - F(g)}{\delta} = \Ex [\Lmg_{\delta} ; X\in S_g].
\end{equation}
We will now show that $\{\Lmg_{\delta}\}_{\delta \in (0, 1)}$ is a uniformly integrable family of random variables. Using It\^o's isometry, representations \eqref{defn:quad_var_M} and \eqref{defn:Z_eps}, and condition (C2), we obtain
\begin{equation}\label{L2_norm_Z}
	\Ex\, \Lmg_{\delta}^2 = \Ex \int_0^{\T} \left( \frac{\mathcal{E}(\delta M^h)_s \dot{h}(s)}{\sigmaX(s,X_s)} \right)^2\,ds \leq \frac{\|h\|_{\cmSpace}^2}{\usigma^2}\Ex \sup_{s\in[0,\T]}\mathcal{E}(\delta M^h)_s^2.
\end{equation}
Since $\mathcal{E}(\delta M^h)$ is a martingale, by Doob's $\mathcal{L}^2$-martingale maximal inequality (see e.g.\ pp.~54--55 in~\cite{RevuzYor}) one has
\begin{equation*}
	\Ex \sup_{s\in[0,\T]}\mathcal{E}(\delta M^h)_s^2 \leq 4 \Ex  \mathcal{E}(\delta M^h)_{\T}^2.
\end{equation*}
As $M^h$ is a continuous martingale, by the Dambis--Dubins--Schwarz theorem (see e.g.\ Theorem~1.6 on p.~181 in~\cite{RevuzYor}), there exists a Brownian motion~$B$ on $(\Omega, \mathcal{F},\Px)$ such that $M_t^h = B_{[M^h]_t},$ $t\in[0,\T].$ Hence, using \eqref{defn:doleans_exp} and \eqref{quadratic_var_bound},
\begin{align*}
	\Ex\, \mathcal{E}(\delta M^h)_{\T}^2 &\leq \Ex \exp\{2\delta M_{\T}^h\} = \Ex \exp\{2 \delta B_{[M^h]_{\T}}\}\\
	&\leq \Ex \exp\left\{ 2 \max_{t \leq \|h\|_{\cmSpace}^2 / \usigma^2}B_t \right\} < \infty.
\end{align*}
This, together with \eqref{L2_norm_Z}, means that $\sup_{\delta \in (0,1)}\Ex \,\Lmg_{\delta}^2 <\infty$ so that $\{\Lmg_{\delta}\}_{\delta \in (0,1)}$ is a uniformly integrable family of random variables. Therefore we can pass to the limit as $\delta \downarrow 0$ on both sides of \eqref{diff_quotient}, establishing the existence of the derivative
\begin{equation}\label{hermite_rep}
	\nabla_h F(g) = \Ex \left[\lim_{\delta \downarrow 0}\Lmg_{\delta}; X\in S_g\right] = \Ex [M_{\T}^h ; X\in S_g]
\end{equation}
since $\lim_{\delta \to 0}\Lmg_{\delta} =M_{\T}^h$ a.s.\ from \eqref{defn:doleans_exp}.

Noting that $\{X \in S_g\}  = \{\tau = \T\}$, using \eqref{hermite_rep}, the martingale representation for $\bm{1}_{\tau = \T}$ from Lemma~\ref{MG_rep}, and the fact that $M^h$ is a zero-mean martingale, we have
\begin{equation*}
	\nabla_h F(g) =\Ex\,M_{\T}^h\bm{1}_{\tau = \T} = \Ex\, M_{\T}^h\int_0^{\T} (\V' \, \sigmaX)(t,X_t) \bm{1}_{\tau >t}\,dW_t.
\end{equation*}
Since $M^h$ is an It\^o integral of the form \eqref{defn:M^h}, by It\^o's isometry it follows that
\begin{equation}\label{eqn:MG_rep_DF}
	\nabla_h F(g) = -\Ex \int_0^{\T} \dot{h}(t)\V'(t,X_t)\bm{1}_{\tau>t}\,dt.
\end{equation}
We will now verify the conditions for applying Fubini's theorem to change the order of integration. By the Cauchy--Bunyakovsky--Schwartz inequality,
\begin{equation*}
	\Ex \int_0^{\T}\lvert \dot{h}(t)\V'(t,X_t)\bm{1}_{\tau >t}\rvert\,dt \leq \|h\|_{\cmSpace}\left(\Ex \int_0^{\T}\V'(t,X_t)^2\bm{1}_{\tau >t}\,dt\right)^{1/2} .
\end{equation*}
The last expression is finite since, applying It\^o's isometry to the martingale representation in Lemma \ref{MG_rep} and then using condition (C2), one has
\begin{align*}
	\frac{1}{4} &\geq \var(\bm{1}_{\tau = \T})=\Ex (\bm{1}_{\tau = \T} - v(0,x_0))^2 = \Ex \left( \int_0^{\T} (v'\,\sigma)(t,X_t)\bm{1}_{\tau >t}\,dW_t \right)^2\\
	&=\Ex \int_0^T (v'\,\sigma)^2(t,X_t)\bm{1}_{\tau > t}\,dt \geq \usigma^2 \Ex \int_0^T v'(t,X_t)^2\bm{1}_{\tau > t}\,dt.
\end{align*}
Hence the integrand the right-hand side of \eqref{eqn:MG_rep_DF} is absolutely integrable with respect to the product measure $\Px \times dt$ on $\Omega \times [0,T],$ so that we can change the order of integration in \eqref{eqn:MG_rep_DF} to obtain
\begin{equation}\label{eqn:df_riesz}
	\nabla_hF(g) = \int_0^T \dot{h}(t)\psi(t)\,dt,\quad h \in \cmSpace.
\end{equation}
Since $\psi$ is continuously differentiable by Theorem~\ref{thm:Z}, we can apply integration by parts to \eqref{eqn:df_riesz} together with the boundary conditions $h(0) =0,$ $\psi(T-) =0$ (Theorem~\ref{thm:Z}) to obtain \eqref{riesz_markov_rep} in the case when $h \in \cmSpace.$

\medskip

\textbf{Step 2.} Now we will turn to the case where $h \in \contSpace^2.$ Set $a:=h(0),$ $h_0(t) := h(t) - a,$ $t\in[0,T].$ For $\delta \in (0,1),$
\begin{equation}\label{eqn:decomp_shift}
	\frac{F(g + \delta h)-F(g)}{\delta}= \frac{F(g + \delta h) - F(g + \delta a)}{\delta}+ \frac{F(g + \delta a) - F(g)}{\delta}.
\end{equation}
Observe that the first term on the right-hand side has a limit as $\delta\downarrow 0$. Indeed, let $\phi(\theta) := F(g + \delta a + \theta h_0),$ $\theta \in [0,\delta].$ We showed in Step~1 that~$\phi$  is differentiable. Therefore, by the mean-value theorem, for any fixed $\delta \in (0,1),$ there exists a $\theta_{\delta} \in [0,\delta]$ such that
\begin{equation}\label{dF_h_a}
	\frac{F(g + \delta h)  - F(g + \delta a)}{\delta} = \frac{\phi(\delta) - \phi(0)}{\delta} = \phi'(\theta_{\delta}) = \nabla_{h_0}F(g + \delta a + \theta_{\delta}h_0),
\end{equation}
again using the result of  Step~1, this time with~$g$ replaced by $g+\delta a+\theta_{\delta}h_0 \in \contSpace^2$ (we assume without loss of generality that $x_0 < g(0) +\delta a$). Using \eqref{hermite_rep} and the Cauchy--Bunyakovsky--Schwartz inequality,
\begin{equation}\label{ineq:cauchy_ineq}
	\lvert \nabla_{h_0}F(g)-\nabla_{h_0}F(b)\rvert \leq \sqrt{\Ex (M_T^{h_0})^2\Px(X \in S_g\triangle S_b)}, \quad  b,g \in \contSpace,
\end{equation}
where $\triangle$ is the symmetric set difference. Letting $a_0 := \lvert a\rvert + \| h_0\|_{\infty},$ one has
\begin{equation*}
	S_g \triangle S_{g+\delta a + \theta_{\delta}h_0} \subseteq S_{g + \delta a_0} \backslash S_{g-\delta a_0} \downarrow \partial S_g \quad  \text{as } \delta \downarrow 0,
\end{equation*}
{so that
\begin{equation}\label{eqn:bndset_prob_0}
\Px(X \in S_g \triangle S_{g+\delta a + \theta_{\delta}h_0}) \to 0 \ \text{ due to }  \ \Px(X \in \partial S_g) =0,
\end{equation}
see e.g.\ p.~1406 in~\cite{Liang2023}.} Hence $\nabla_hF(g)$ is continuous in~$g$ in the uniform topology and
\begin{equation*}
	\lim_{\delta \downarrow 0}\nabla_{h_0}F(g + \delta a + \theta_{\delta}h_0) = \nabla_{h_0}F(g).
\end{equation*}
Now from here and \eqref{dF_h_a} we have
\begin{equation}\label{dF_h0}
	\lim_{\delta \downarrow 0}\frac{F(g + \delta h)  - F(g + \delta a)}{\delta} = \nabla_{h_0}F (g)= -\int_0^{\T} h_0(t)\dot{\psi}(t)\,dt
\end{equation}
using the result of Step 1 for $h_0 \in \cmSpace.$

Arguing in the same way as in the proof of Theorem~5 in~\cite{Borovkov2010}, it is not hard to show that the limit of the second term on the right-hand side of \eqref{eqn:decomp_shift} exists and is equal to
\begin{equation*}
	 \nabla_{a}F (g) = \lim_{\delta \downarrow 0}\frac{F(g+\delta a) - F(g)}{\delta} = -a \int_0^T \kappa(t) f_{\tau}(t)\,dt,
\end{equation*}
where $\kappa(t)$ is given by the right-hand side of \eqref{v_bd_deriv_meander} and is equal to $v'(t,g(t)).$ Now it follows from \eqref{eqn:doob_h} and \eqref{eqn:psi_dot_1} that
\begin{align*}
	 \nabla_{a}F (g) &= -a\int_0^T v'(t,g(t))f_{\tau}(t)\,dt \\
	&= -a\int_0^Tv'(0,x_0)f_{\tau}^{\mathbf{Q}}(t)\,dt = -a\int_0^T \dot{\psi}(t)\,dt.
\end{align*}
Combining this with~\eqref{eqn:decomp_shift} and~\eqref{dF_h0}, we complete the proof of Theorem~\ref{thm:dF}.
\end{proof}

\begin{remark}
The key difference between representation \eqref{riesz_markov_rep} and the one from \eqref{eqn:df_riesz} is that the latter one is only valid in the case when $h(0) =0 .$
\end{remark}

{\begin{proof}[Proof of Theorem~\ref{thm:dF_f}]
Using similarly to~\eqref{dF_h_a}, inequality~\eqref{ineq:cauchy_ineq}, and the fact that $\Ex (M_T^h)^2 \leq \| h\|_H^2/\usigma^2,$ one has,
\begin{align*}
	\left | F(g + \varepsilon h) - F(g) - \varepsilon \nabla_hF(g) \right |&= \left |\int_0^{\varepsilon} \frac{d}{d\theta}F(g + \theta h)\,d\theta - \varepsilon\nabla_h F(g)\right | \\
&=\left |\int_0^{\varepsilon} (\nabla_h F(g+ \theta h) - \nabla_h F(g))\,d \theta \right | \\
	&= \varepsilon\left |\int_0^1 (\nabla_h F(g+ \varepsilon u h) - \nabla_h F(g))\,d u \right |\\
		&\leq \varepsilon \sqrt{\Ex (M_T^h)^2} \sup_{u \in [0,1]}\sqrt{\Px(X \in S_{g + \varepsilon u h}\triangle S_g)}\\
		&\leq \varepsilon\frac{\| h\|_H}{\usigma} \sup_{u\in[0,1]}\sqrt{\Px(X \in S_{g + \varepsilon u h}\triangle S_g)},
\end{align*}
where $\sup_{u \in [0,1]}\Px(X \in S_{g + \varepsilon u h}\triangle S_g)\to 0$ as $\varepsilon \to 0$ by the same argument as in~\eqref{eqn:bndset_prob_0}. Taking suprema   in~$h$ subject to  $\|h\|_{\cmSpace} =1$ on both sides of the above inequality, we obtain the desired result.
\end{proof}
}

\begin{remark}\label{riesz_rep}
There is an alternative derivation of representation~\eqref{eqn:df_riesz} for $\nabla_h F,$ $h\in \cmSpace,$ and representation~\eqref{defn:psi} for $\psi(t)$ that is based on the Riesz representation formula for bounded linear functionals on Hilbert spaces, which is somewhat longer than our more compact argument using the martingale representation of $\bm{1}_{\tau =T}$. We will outline it below.

To begin with, it follows from \eqref{hermite_rep} and the linearity of expectations and the It\^o integral that $\nabla_h F(g)$ is a bounded linear functional on $\cmSpace.$ Hence, by the Riesz representation formula (see e.g.\ \S58 in~\cite{Kolmogorov1957b}), there exists a unique $k\in\cmSpace$ such that
\begin{equation}\label{riesz_rep_dF}
	\nabla_h F(g) = \langle h, k\rangle_{\cmSpace} = \int_0^{\T}\dot{h}(s)\dot{k}(s)\,ds,\quad h \in \cmSpace.
\end{equation}
Setting $h_t^* := s \wedge t$ for $s,t \in [0,\T],$ and
\begin{equation*}
	\rieszMG_t:= M_T^{h_t^*}= \int_0^t\frac{-dW_s}{\sigma(s,X_s)} , \quad t \in [0,T],
\end{equation*}
we have
\begin{equation}\label{defn:k}
	k(t) = \int_0^{\T} \dot{h}_t^*(s)\dot{k}(s)\,ds=  \Ex [M_T^{h_t^*};X\in S_g] = \Ex [ \rieszMG_t ; X\in S_g], \quad t\in[0,\T].
\end{equation}
By the Markov property of $X$, it follows that
\begin{equation}\label{eqn:psi_1}
	k(t) = \Ex \,\rieszMG_t\mathbf{1}_{\tau =T} = \Ex [\rieszMG_t \mathbf{1}_{\tau >t} \Ex [ \mathbf{1}_{\tau^{t,X_t}=T} \,|\,\mathcal{F}_t]]= \Ex \,\rieszMG_t v(t,X_t)\mathbf{1}_{\tau >t}.
\end{equation}
Using the It\^o calculus product rule and setting $R_t:=v(t,X_t),$ we have, since $U_0=0,$
\begin{equation}\label{eqn:Y_v}
	\rieszMG_{t\wedge \tau} R_{t\wedge \tau}= \int_0^{t\wedge \tau} \rieszMG_s\, dR_s+\int_0^{t\wedge \tau} R_s\,d\rieszMG_s + [U,R]_{t\wedge \tau},
\end{equation}
where $[\rieszMG,R]_{\cdot} $ denotes the quadratic covariation process of $U$ and $R$. Since $\rieszMG$ and $R$ are both uniformly integrable martingales, taking expected values on both sides of \eqref{eqn:Y_v} and using the optional stopping theorem, we obtain
\begin{equation}\label{stopped_quad_covar}
	\Ex [\rieszMG,R]_{t \wedge\tau }  = \Ex \,\rieszMG_{t\wedge\tau}R_{t\wedge\tau}=\Ex \,\rieszMG_t R_t\mathbf{1}_{\tau >t} +\Ex \,\rieszMG_{\tau}R_{\tau}\mathbf{1}_{\tau\leq t} = k(t)
\end{equation}
from~\eqref{eqn:psi_1}, since $R_{\tau} = v(\tau,X_{\tau}) = v(\tau,g(\tau)) =0$ on the event $\{\tau\leq t\},$ $t\in [0,\T).$ As $[R]_{t\wedge \tau} = \int_0^{t\wedge \tau}( v'\,\sigma)^2(s,X_s)\,ds$ and $[\rieszMG]_t = -\int_0^t ds/\sigma^2(s,X_s)$ (due to \eqref{v_MG} and \eqref{defn:quad_var_M}, respectively), we have
\begin{equation*}\label{quad_covar}
	[\rieszMG,R]_{t\wedge \tau}= -\int_0^{t\wedge \tau} v'(s,X_s)\,ds,\quad t\in [0,\T).
\end{equation*}
It follows from \eqref{stopped_quad_covar} that
\begin{equation}\label{eqn:k}
	k(t)  = - \Ex \int_0^{t\wedge\tau}v'(s,X_s)\,ds.
\end{equation}
Using the dominated convergence theorem, justified by the boundedness of $v''$ in $\text{cl}(D_{\T-\varepsilon}),$ $\varepsilon \in(0,\T),$ we conclude that the right-hand side of \eqref{eqn:k} is differentiable with respect to $t,$ and hence
\begin{equation*}
	\dot{k}(t) = -\Ex \,v'(t,X_t)\bm{1}_{\tau >t} = \psi(t).
\end{equation*}
Now \eqref{eqn:df_riesz} follows from \eqref{riesz_rep_dF}, which completes our derivation.

\end{remark}

\begin{remark}
In the case of the Brownian motion process $X=W,$ under some broad conditions on the mapping $J : \contSpace_0 \to \mathbb{R},$ a representation for the first variation in directions from~$\cmSpace$ of functionals of the form
\begin{equation*}
	g \mapsto \Ex [J(X) ; X \in S_g]
\end{equation*}
on the space of Borel measurable functions $g(t),$ $t\in [0,T],$ was derived in~\cite{Cameron1951}. For $J(\cdot) \equiv 1,$ that representation coincides with our \eqref{hermite_rep} in the special case when $X=W.$
It appears that paper~\cite{Cameron1951} might have been mostly overlooked in the literature devoted to the boundary-crossing problem for diffusion processes.
\end{remark}

\begin{remark}
After the authors had established \eqref{riesz_rep_dF} with the function $k$ given by \eqref{defn:k} using the Riesz representation, they became aware that, in the special case of $X = W,$ the same representation was obtained in~\cite{Gur2019}.
\end{remark}

{
\begin{proof}[Proof of Theorem~\ref{thm:dF_n}]
Since $h_n \in \cmSpace,$ $n\in\mathbb{N}$, the following representation 
follows from Theorem~\ref{thm:dF}:
\begin{equation}\label{eqn:dF_g_n}
	\nabla_{h_n}F(g)  = -\int_0^T h_n(t)\dot{\psi}(t)\,dt.
\end{equation}

First  we will show that,  for all indicator  functions   of the form $\phi (t) := \mathbf{1}_{(a,b]}(t) $ with $0\le a < b \le T,$ one has
\begin{equation}\label{eqn:FMCT}
\lim_{n\to\infty} \int_0^T h_n(t)\phi(t)\,dt = \frac{T^2}{12}\int_0^T \ddot{g}(t)\phi(t)\,dt.
\end{equation}

Introduce functions $q_{I}(t) :=  \frac{1}{2} (t-a)(b-t) \mathbf{1}_I(t),$ $I := (a,b)$ for $ a<b,$ and note that clearly
\begin{equation}\label{eqn:norm_gI}
	\|q_{I}\|_\infty = (b-a)^2/8. 
\end{equation}
As $\ddot{g}$ is uniformly continuous, setting $ I_{n,j} := (\frac{j-1}{n}T,\frac{j}{n}T )$, $j=1,\ldots, n,$ one has (see e.g.~Theorem 8.10 in~\cite{Kress1998})
\begin{equation*}
	h_n(t) = n^2\sum_{j=1}^n   q_{I_{n,j}}(t) (\ddot{g}(\mbox{$\frac{j}{n}T$}) + \theta_n(t)) ,
\end{equation*}
where $\| \theta_n\|_\infty \to 0 $    as $n\to\infty.$ From here and~\eqref{eqn:norm_gI} we get 
\begin{equation}\label{eqn:h_small}
\| h_n\|_\infty \le \mbox{$\frac{T^2}8$}\|\ddot g\|_\infty +o(1).
\end{equation}

Since clearly $n^2\int_0^T q_{I_{n,j}}(t)\,dt =n^2\int_{I_{n,j}}q_{I_{n,j}}(t)\,dt= T^3/(12n)$ and, due to~\eqref{eqn:norm_gI},
\[
\Big\| n^2 \theta_n \sum_{j=1}^n   q_{I_{n,j}}  \Big\|_\infty
 \le 
 \| \theta_n\|_\infty n^2 \max_{1\leq j \leq n} \|q_{I_{n,j}}\|_\infty
 = \mbox{$\frac{T^2}8$} \| \theta_n\|_\infty =o(1),
\]
it follows that
\begin{equation}\label{eqn:approx_integral}
	\int_0^T h_n(t)\phi (t)\,dt = \sum_{j= \lceil an/T \rceil}^{\lfloor b n/T\rfloor} \ddot{g}( \mbox{$\frac{j}{n}T$})\frac{T^3}{12n} +  o(1),\quad n\to\infty.
\end{equation}
As $\ddot{g}$ is continuous, the Riemann sum above converges to the    integral on the right-hand side of~\eqref{eqn:FMCT}. 

Next it follows by the linearity of the integral that~\eqref{eqn:FMCT} also holds for step functions~$\phi.$ Finally, for any $\phi\in L^1 :=L^1([0,T],dt)$ and  $\varepsilon >0,$ there exists a step function $\phi_0$ such that $\|\phi -\phi_0\|_{L^1}<\varepsilon,$ so that
\begin{align*}
\bigg|\int_0^T h_n (t) \phi (t) dt- \int_0^T h_n (t) \phi_0 (t) dt \bigg|
& \le 
\int_0^T |h_n (t) (\phi (t)-\phi_0 (t)) |dt
\\
&  \le \|h_n\|_\infty\cdot \|\phi -\phi_0\|_{L^1} <c\varepsilon
\end{align*}
for some $c<\infty$ in view of~\eqref{eqn:h_small}. Therefore we have
\begin{align*}
\limsup_{n\to\infty} 
 \int_0^T h_n (t) \phi (t) dt 
 & 
 \le 
 \lim_{n\to\infty} \int_0^T h_n (t) \phi_0 (t) dt + c\varepsilon
 \\
 & 
 =  \frac{T^2}{12} \int_0^T \ddot g (t) \phi_0 (t) dt  + c\varepsilon
 \le  \frac{T^2}{12}  \int_0^T \ddot g (t) \phi  (t) dt  + c_1 \varepsilon
\end{align*}
with $c_1 = \frac{T^2}{12}\|\ddot g\|_\infty +c.$ As a symmetric lower bound follows using the same argument and $\varepsilon>0$ is arbitrary small, we showed that~\eqref{eqn:FMCT} holds for any $\phi\in L^1.$   Since $\dot{\psi} \in L^1,$ Theorem~\ref{thm:dF_n} is proved.
\end{proof}
}

\section{Examples}\label{examples}

Only in the simplest cases can one obtain closed-form expressions for the boundary non-crossing probability, let alone its G\^ateaux derivative. However, it is still instructive to verify our formulae in two simple special cases that admit closed-form expressions for both the boundary crossing probability and its G\^ateaux derivative: the Brownian motion process and a linear boundary, and the Brownian bridge process and a linear boundary (Examples~\ref{exam_1} and \ref{exam_2} respectively).

 {Note that standard numerical solvers (e.g.\ \texttt{NDSolve} in {\sc Mathematica}) applied to \eqref{PDE} will provide approximate values of $v(s,x)$ for all $(s,x) \in \text{cl}(\D).$ Hence one can use finite differences to approximate $v'(t,g(t))$ and~$f_{\tau}(t),$ allowing for the approximation of $\dot{\psi}(t).$ Therefore one can obtain an approximation for $\nabla_h F (g)$ in the general case by numerically integrating \eqref{riesz_markov_rep}. We will demonstrate this in Example~\ref{exam_3} below.}

{Finally, we will numerically  confirm the claim of Theorem~\ref{thm:dF_n} in the special  case of the Brownian motion and the so-called Daniels boundary~$g$. Namely,  we will numerically show in Example~\ref{exam_4} that, in this particular  case, the sequence of derivatives $\nabla_{h_n}F(g_n),$ $n \in \mathbb{N},$ where $h_n =n^2(g_n -g)$ is the rescaled error arising from  approximation of~$g$ by the piecewise linear functions $g_n $, approaches $\nabla_{T^2\ddot{g}/12} F(g)$ as~$n$ grows. }

\begin{example}\label{exam_1}  \textit{Brownian motion and a linear boundary.}
For $t\in[0,\T],$ let $g(t) := a_1 + b_1t$ with $a_1>0$, $b_1\in \mathbb{R},$ and $h(t) := a_2 + b_2t$ with $a_2,b_2\in \mathbb{R}$. It is well-known (see e.g.\ 1.1.4 on p.~250 in~\cite{Borodin2002}) that, in the case of the Brownian motion process ($\muX \equiv 0,$ $\sigmaX \equiv 1$), for $(s,x) \in \D,$ the function $v$ defined in~\eqref{defn:v} equals
\begin{equation}\label{linear_bcp}
	v(s,x)=\Phi\left(\frac{a_1 + b_1 \T - x}{\sqrt{\T-s}} \right) -e^{-2b_1(a_1 + b_1s-x)}\Phi\left(\frac{x- a_1 + b_1\T-2b_1s}{\sqrt{\T-s}}\right),
\end{equation}
where $\Phi(z) := \int_{-\infty}^{z}\varphi(y)\,dy,$ $\varphi(y) := e^{-y^2/2}/\sqrt{2\pi}.$ Due to the space-homogeneity and time-scaling property of the Brownian motion, it suffices to analyse the case where $\T=1$ and $x_0=0.$ From \eqref{linear_bcp},
\begin{equation}\label{bd_deriv_linear}
	v'(t,g(t)) = -\mbox{$\sqrt{\frac{2}{\pi(1-t)}}$}e^{-\frac{1}{2}b_1^2 (1-t) } - 2b_1\Phi(b_1\sqrt{1-t}), \quad t\in(0,1),
\end{equation}
and so
\begin{align}\label{dF_BM}
	F(g) &= v(0,0) = \Phi(a_1+b_1) - e^{-2a_1b_1}\Phi(b_1-a_1),\nonumber\\
	\nabla_{h} F(g) &= 2a_2\varphi(a_1+b_1) +  2(a_1b_2 + b_1a_2)e^{-2a_1b_1}\Phi(b_1-a_1) .
\end{align}
Using the well-known formula for the density $f_{\tau}$ of the first hitting time of $g(t) = a_1+b_1t$:
\begin{equation}\label{bachelier_levy}
	f_{\tau}(t) = \frac{a_1}{t}\varphi_{t}(a_1+b_1t),\quad t >0,\quad \varphi_t(y) := \varphi(y/\sqrt{t})/\sqrt{t}, \quad t >0, \,y \in \mathbb{R},
\end{equation}
(see e.g.~2.0.2 on p.~198 in~\cite{Borodin2002}) one can verify numerically for concrete values of $a_1,b_1$ and $a_2,b_2$ that our integral representation \eqref{riesz_markov_rep} from Theorem \ref{thm:dF} yields the same results as \eqref{dF_BM}.

Now we turn to Lemma \ref{prop:VL_BD}. For our linear boundary $g$ one has (see \eqref{defn:eta})
\begin{equation*}
	\muZ(t,z) = -\dot{g}(t) = -b_1, \quad \Eta(t,z) = -b_1z,
\end{equation*}
so that, from \eqref{defn:G},
\begin{equation*}
	G_t(w) = \exp\{-b_1 w_{1-t} - \mbox{$\frac{1}{2}$}b_1^2 (1-t)\}, \quad w \in \contSpace ([0,1-t]).
\end{equation*}
Hence it follows that
\begin{equation*}
	 \Ex \,G_t(-W^{\oplus,1-t})= e^{-\frac{1}{2}b_1^2(1-t)}\Ex e^{b_1 W_{1-t}^{\oplus ,1-t}}, \quad t\in(0,1).
\end{equation*}
Recalling that (see e.g.\ (54) in~\cite{Borovkov2010})
\begin{equation*}
	\Ex e^{\lambda W_{1}^{\oplus,1}}= 1 + \sqrt{2\pi }\lambda e^{\lambda^2 /2}\Phi(\lambda ), \quad \lambda \in \mathbb{R},
\end{equation*}
and hence $\Ex e^{b_1W_{1-t}^{\oplus,1-t}} =  \Ex e^{b_1\sqrt{1-t}W_1^{\oplus,1}}$ by the scaling property of the Brownian meander: $\{W_{s}^{\oplus ,u}: s\in [0,u]\} \stackrel{d}{=} \{\sqrt{u}W_{s}^{\oplus,1}, s\in [0,1]\}$, we obtain, in our special case, that the right-hand side of \eqref{v_bd_deriv_meander} does coincide with \eqref{bd_deriv_linear}.

In the special case when $a_2=0,$ representation \eqref{riesz_rep_dF} for $\nabla_{h} F(g)$ becomes
\begin{equation*}
	\nabla_{h} F(g) = \langle h,k\rangle_{\cmSpace}= b_2\int_0^1 \dot{k}(t)\,dt = b_2k(1)
\end{equation*}
since $k(0)=0$ for $k\in \cmSpace$. Using the well-known fact that, for our $g(t) = a_1+b_1t$ and $x<a_1+b_1,$ one has (see e.g.\ p.~67 in~\cite{Borodin2002})
\begin{equation*}
	\Px(W \in S_g\,|\, W_1 = x) = 1 - e^{-2a_1(a_1 + b_1 -x)},
\end{equation*}
we obtain from \eqref{defn:k} with $U_t = -W_t$ that
\begin{align*}
	k(1)& =-\Ex [W_1; W\in S_g] = -\int_{-\infty}^{g(1)}x \Px(W\in S_g\,|\,W_1=x)\Px(W_1 \in dx)\\
	&= -\int_{-\infty}^{a_1+b_1}x(1-e^{-2a_1(a_1+b_1-x)})\varphi(x)\,dx =2 a_1 e^{-2a_1b_1}\Phi(b_1-a_1),
\end{align*}
which coincides with \eqref{dF_BM} when $a_2 =0.$

\begin{figure}[htbp]
	\centering
	\includegraphics[scale = 0.85]{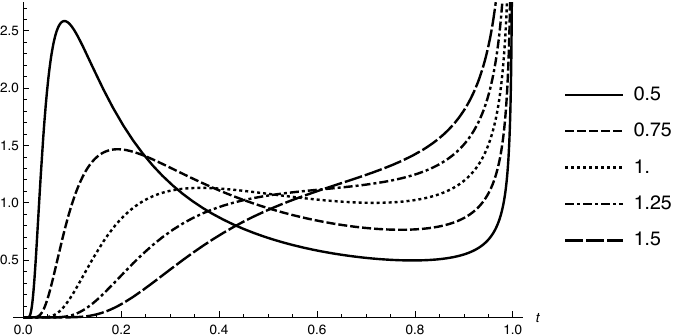}
 	\caption{The graphs of $f_{\tau}^{\mathbf{Q}}(t)$ for $g(t) = a_1+\frac{1}{2}t$ for different values of~$a_1$ in the case of the standard Brownian motion. The high values of $f_{\tau}^{\mathbf{Q}}$ in vicinity of $t=0$ for small~$a_1$ mean that the contribution of $h(t)$ for those~$t$'s to $\nabla_hF(g)$ is more significant in this situation.}
	\label{fig:bm1}
\end{figure}

As we observed after \eqref{eqn:VL}, the effect of the value of the direction $h(t)$ at time $t\in(0,\T)$ on the derivative $\nabla_h F$ is proportional to the value of $f_{\tau}^{\mathbf{Q}}(t).$ Representation \eqref{eqn:doob_h}, \eqref{v_bd_deriv_meander} for this density shows that it ``explodes'' as $t \uparrow T.$ This can be seen in Fig.~\ref{fig:bm1} depicting the graphs of $f_{\tau}^{\mathbf{Q}}$ in the special case considered in this example, for $a_1 = 0.5;0.75;1;1.25;1.5,$ and $b_1 = 1/2.$ Therefore, unless the boundary $g(t)$ is pretty close to~$x_0$ for small $t\geq0,$ $\nabla_hF$ is most sensitive to the values of $h(t)$ for $t$
close to $T.$ However, when $g(0)-x_0$ is small, the density $f_{\tau}^{\mathbf{Q}}(t)$ becomes large close to the time point $t=0$ as well (see Fig.~\ref{fig:bm1}), thus making the effect of the values of $h(t)$ for such $t$ on $\nabla_hF$ even more significant than that of $h(t)$ as $t\uparrow T$. The above considerations, which, from the probabilistic viewpoint make perfect sense, are to be taken into account when approximating general boundaries~$g$ with simpler ones to make computation of boundary crossing probabilities more feasible, cf.\ our discussion in Section~1.

We conclude this example with a comment on what our measure $\mathbf{Q}$ is in the special case when $b_1 =0$ (so that $g(t) \equiv a_1$). Recalling the definition of $\gamma$ in \eqref{defn:h}, we have in this case
\begin{align*}
	\gamma(t,y) = \frac{v''}{v'}(t,y)= \frac{a_1-y}{1-t}, \quad (t,y) \in D_1,
\end{align*}
which we recognise as the drift of a Brownian bridge pinned at the points $(0,0)$ and $(1,a_1).$ Denoting by $\{W_t^{\circ}:t\in[0,1]\}$ the standard Brownian bridge process on $[0,1],$ it follows from Proposition~\ref{doob_transform} that, under   measure $\mathbf{Q}$, our $X= W$ has the same dynamics on the time interval $[0,\tauX]$ as $W_t^{\circ} + a_1t$ prior to hitting $g(t) \equiv a_1.$
\end{example}

\begin{example}\label{exam_2} \textit{Brownian bridge and a linear boundary}.
For a fixed $y\in\mathbb{R},$ the Brownian bridge $\{X_t = W_t^{\circ} + yt : t\in[0,1]\}$ satisfies the following stochastic equation (see e.g.\ p.\,154 in~\cite{RevuzYor}):
\begin{equation*}
	X_t = \int_0^t\frac{y-X_s}{1-s}\,ds + W_t, \quad t \in [0,1).
\end{equation*}
Clearly, condition (C3) is not satisfied in what concerns the drift coefficient of our process. However, we will see below  that the integral representation for $\nabla_hF$ in Theorem~\ref{thm:dF} still holds, indicating that the conditions in (C3) may be relaxed.

The probability that our Brownian bridge~$X$ does not cross the linear boundary $g(t) = a_1+ b_1t$ with $a_1>0,$ $b_1\in\mathbb{R}$ and $a_1+b_1 >y,$ is well-known (see e.g.\ p.~67 in~\cite{Borodin2002}): for $ (s,x) \in D_1,$
\begin{equation}\label{brownian_bridge_bcp}
	v(s,x) =  1- \exp\{\mbox{$\frac{-2}{1-s}$}(a_1+b_1s - x)(a_1+b_1-y)\}.
\end{equation}
Let $h(t) := a_2 + b_2t$ with $a_2,b_2\in \mathbb{R}.$ From \eqref{brownian_bridge_bcp} we explicitly compute
\begin{equation}\label{eqn:dF_bb}
	\nabla_h F(g) = 2(2 a_1a_2 - a_2y + (b_1 a_2  + a_1 b_2) )e^{-2a_1(a_1+b_1-y)}.
\end{equation}
To compare this with our representation for $\nabla_h F$ from Theorem~\ref{thm:dF}, we first obtain from \eqref{brownian_bridge_bcp} that
\begin{equation*}\label{eqn:v'}
	 v'(t,g(t)) = -\frac{2(a_1+b_1-y)}{1-t},\quad t \in (0,1).
\end{equation*}
To compute $f_{\tau},$ recall that, for a general boundary $g_0,$ the first hitting time density of the Brownian bridge $X$ can be obtained in terms of the first hitting time density $\overline{f}_{\tau}(t)$ of that boundary by the Brownian motion: by Corollary~4 in~\cite{Borovkov2010},
\begin{equation*}
	f_{\tau}(t) = \frac{\varphi_{1-t}(y-g_0(t))}{\varphi_1(y)}\overline{f}_{\tau}(t), \quad t \in(0,1).
\end{equation*}
Now we get from \eqref{eqn:psi_dot_2} that, for the linear boundary $g(t) = a_1+b_1t,$ one has
\begin{equation*}\label{eqn:dpsi_BB}
	\dot{\psi}(t) = -\frac{2(a_1+b_1 - y)}{1-t}\frac{\varphi_{1-t}(y-a_1-b_1t )}{\varphi_1(y)}\overline{f}_{\tau}(t),\quad t\in (0,1),
\end{equation*}
where~$\overline{f}_{\tau}$ is given by the right-hand side of \eqref{bachelier_levy}.
Fig.~\ref{fig:bb1} illustrates the above formula, showing how the shape of the factor  $-\dot{\psi}(t)$ in representation~\eqref{riesz_markov_rep} for the derivative depends on the slope of the linear boundary.

One can verify numerically for concrete values of $a_1,b_1$ and $a_2,b_2$ that \eqref{riesz_markov_rep} with the above $\dot{\psi}$ yields the same results as \eqref{eqn:dF_bb}.

\begin{figure}[t]
	\centering
	\includegraphics[scale = 0.85]{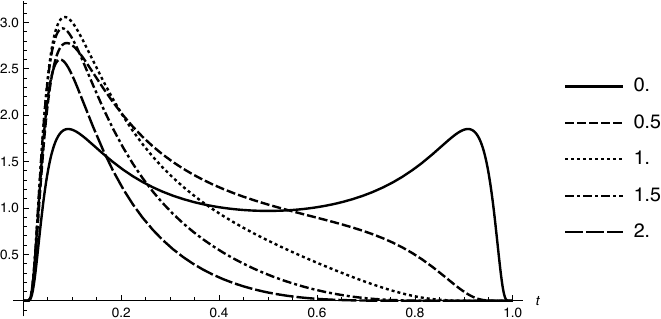}
 	\caption{The graph of $-\dot{\psi}(t)$ for $g(t) = \frac{1}{2}+b_1t$ for different values of $b_1,$ in the case of the Brownian bridge pinned at the points $(0,0)$ and $(1,0).$}
	\label{fig:bb1}
\end{figure}
\end{example}

\begin{example}\label{exam_3}\textit{Hyperbolic process and a hyperbolic boundary.}
{Let
\begin{equation*}
	g(t) = \frac{1}{2} + \frac{1}{4}\sin{3\pi t}, \quad t\geq0.
\end{equation*}
Suppose that $X$ is a diffusion process with drift coefficient
\begin{equation}\label{defn:drift_hyper}
	\mu(x) = \kappa \frac{1 - \lambda e^{-2\kappa x}}{1 + \lambda e^{-2\kappa x}}, \quad x,\kappa \in \mathbb{R}, \quad \lambda >0,
\end{equation}
and a unit diffusion coefficient. For this   process, closed-form expressions for the boundary crossing probabilities for a special class of boundaries  were obtained in~\cite{Crescenzo1997}. In the numerical example considered here, we set $\kappa = -1/2$ and $\lambda =1.$ By numerically solving the relevant boundary-value problems for the backward Kolmogorov equation~\eqref{PDE} and related forward equation in {\sc Mathematica}, we obtained approximations for $v'(t,g(t))$ and $f_{\tau}(t),$ respectively, for $t \in (0,1).$ Taking $h(t) = -\sin{3\pi t}$ and using numerical integration to evaluate the integral in~\eqref{riesz_markov_rep}, we obtained $\nabla_h F(g) \approx 0.155,$ which agrees with the finite difference approximation $(F(g + \delta h) - F(g))/\delta$ for $\delta = 0.001,$ obtained using our numerical solution to~\eqref{PDE}.}

{
The integral representation~\eqref{riesz_markov_rep} is useful for computational purposes since the density $-\dot{\psi}$ only needs to be calculated once, and then one can obtain $\nabla_h F(g)$ for arbitrary directions $h$ by evaluating a one-dimensional integral. Fig.~\ref{fig:ex3} illustrates the shapes of $-\dot{\psi}(t),$ $f_{\tau}(t)$ and $-v'(t,g(t))$ in the above setting.
\begin{figure}[t]
	\centering
	\includegraphics[scale = 0.85]{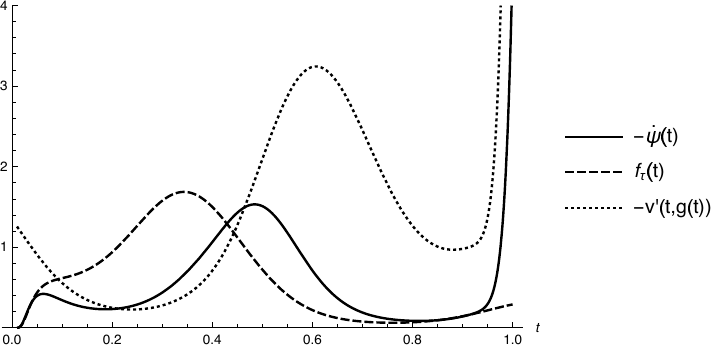}
 	\caption{The graphs of $-\dot{\psi}(t),$ $f_{\tau}(t)$ and $-v'(t,g(t))$ for $g(t) = \frac{1}{2}+\frac{1}{4}\sin{3\pi t},$ in the case of the diffusion process $X$ with drift coefficient given by~\eqref{defn:drift_hyper} and a unit diffusion coefficient.}
	\label{fig:ex3}
\end{figure}
}
\end{example}

{
\begin{example}\textit{Piecewise linear approximations of Daniels' boundary.}\label{exam_4}
The so-called Daniels boundary is defined as follows:
\begin{equation}\label{defn:daniels}
	g(t) := \frac{1}{2} - t \log\left( \frac{1}{4}\left(1 + \sqrt{1+8e^{-1/t}} \right) \right), \quad t >0,
\end{equation}
where $g(0+) = 1/2.$ A closed-form expression for $F(g)$ in the case of the Brownian motion was derived by Daniels~\cite{Daniels1969} using the method of images. We will consider the case $T=1.$ As before, for $n\in\mathbb{N},$ $g_n$ denotes the piecewise approximation of~$g$ with nodes at $(k/n,g(k/n)),$ $k=0,1\ldots,n.$   Using  {\sc Mathematica}, we numerically verified the following identity:
\begin{equation*}
	\lim_{n\to\infty} \nabla_{n^2(g_n- g)}F(g) = -\int_0^1 \frac{\ddot{g}(t)}{12} \dot{\psi}(t)\,dt \approx -0.0398.
\end{equation*}
Fig.~\ref{fig:ex4} illustrates the graphs of $\ddot{g}/12$ and $n^2(g_n -g)$ for $n=5.$
\begin{figure}[t]
	\centering
	\includegraphics[scale = 0.85]{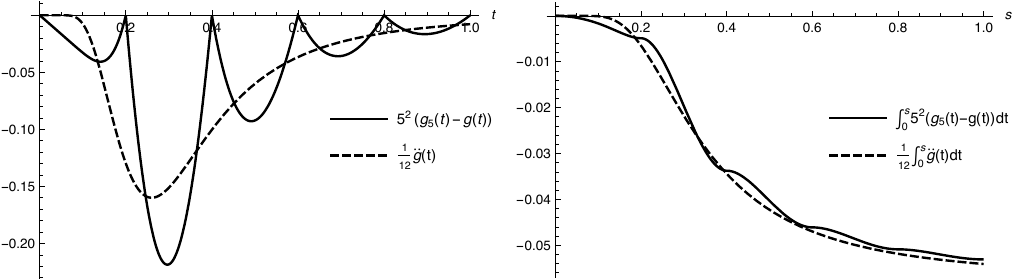}
 	\caption{{The left pane shows the graphs of $h_n(t) := n^2 (g_n(t) - g(t))$ for $n =5$ and $\ddot{g}(t)/12$ in the case of Daniels' boundary $g$ given by~\eqref{defn:daniels}. The right one shows how close the integrals of these functions over intervals $[0,s]$ for variable $s \in [0,1]$ are (cf.~\eqref{eqn:approx_integral}).}}
	\label{fig:ex4}
\end{figure}
\end{example}
}

\textbf{Acknowledgements.} The authors are grateful to N.\ V.\ Krylov for his advice on the differentiability properties of $v(s,x),$ which led to Lemma \ref{BKE}, and to X.~Geng and B.~Goldys for discussions on the problem.

\end{document}